\newtheorem{theorem}{Theorem}[section]
\newtheorem{definition}[theorem]{Definition}
\newtheorem{lemma}[theorem]{Lemma}
\newtheorem{proposition}[theorem]{Proposition}
\newtheorem{corollary}[theorem]{Corollary}
\newtheorem{remark}[theorem]{Remark}
\newtheorem{problem}{Problem}
\newcommand{\hh}{{\mathbb{H}}}
\newcommand{\cc}{{\mathbb{C}}}
\newcommand{\rr}{{\mathbb{R}}}
\newcommand{\nn}{{\mathbb{N}}}
\newcommand{\s}{{\mathbb{S}}}
\newcommand{\z}{{\mathcal{Z}}}
\newcommand{\mr}{{\mathcal{M}}}
\newcommand{\B}{{\mathbb{B}}}
\begin{document}

\markboth{Cinzia Bisi, Caterina Stoppato}
{Landau's theorem for slice regular functions on the quaternionic unit ball}


\date{}

\title{\bf Landau's theorem for slice regular functions on the quaternionic unit ball}

\author{Cinzia Bisi
\\
\small Universit\`a degli Studi di Ferrara\\
\small Dipartimento di Matematica e Informatica\\
\small Via Machiavelli 35, I-44121 Ferrara, Italy\\
\small cinzia.bisi@unife.it\\
\and
Caterina Stoppato$^*$
\\
\small Universit\`a degli Studi di Firenze\\
\small Dipartimento di Matematica e Informatica ``U. Dini''\\
\small Viale Morgagni 67/A, I-50134 Firenze, Italy\\
\small stoppato@math.unifi.it
}


\maketitle

\begin{abstract}
Along with the development of the theory of \emph{slice regular} functions over the real algebra of quaternions $\hh$ during the last decade, some natural questions arose about slice regular functions on the open unit ball $\B$ in $\hh$. This work establishes several new results in this context. Along with some useful estimates for slice regular self-maps of $\B$ fixing the origin, it establishes two variants of the quaternionic Schwarz--Pick lemma, specialized to maps $\B\to\B$ that are not injective. These results allow a full generalization to quaternions of two theorems proven by Landau for holomorphic self-maps $f$ of the complex unit disk with $f(0)=0$. Landau had computed, in terms of $a:=|f'(0)|$, a radius $\rho$ such that $f$ is injective at least in the disk $\Delta(0,\rho)$ and such that the inclusion $f(\Delta(0,\rho))\supseteq\Delta(0,\rho^2)$ holds. The analogous result proven here for slice regular functions $\B\to\B$ allows a new approach to the study of Bloch--Landau-type properties of slice regular functions $\B\to\hh$.
\end{abstract}


\section{Introduction}

The unit ball in the real algebra of quaternions $\hh$, namely
\[\B:=\{q \in \hh : |q|<1\}\,,\]
is the subject of intensive investigation within the theory of slice regular quaternionic functions introduced in~\cite{cras,advances}. The theory is based on the next definition, where the notation $\s := \{ q \in \hh\,\, | \,\, q^{2} = -1 \}$ is used for the sphere of quaternionic \emph{imaginary units}.

\begin{definition}\label{def:sliceregular}
Let $\Omega$ be a domain (an open connected set) in $\hh$ and let $f \colon \Omega \to \hh.$ For all $I \in \s,$ let us use the notations $L_I:=\rr+I\rr$, $\Omega_I:= \Omega\,\cap\,L_I$ and $f_I:=f_{|_{\Omega_I}}.$ The function $f$ is called (Cullen or) \emph{slice regular} if, for all $I \in \s$, the restriction $f_{I}$ is holomorphic; that is, if, for all $I \in \s$, $f_I$ is differentiable and the function $\overline{\partial}_I f \colon \Omega_I \to \hh$ defined by 
\[\overline{\partial}_I f (x+yI) : = \frac{1}{2} \left(\frac{\partial}{\partial x} + I \frac{\partial}{\partial y}\right) f_I (x+Iy)\]
vanishes identically. If this is the case, then a slice regular function $\partial_c f : \Omega \to \hh$ can be defined by setting
\[\partial_c f (x+yI) : = \frac{1}{2} \left(\frac{\partial}{\partial x} -I \frac{\partial}{\partial y}\right) f_I(x+yI)\]
for $I \in \s,\ x,y \in \rr$ (such that $x+Iy \in \Omega$). It is called the \emph{Cullen derivative} of $f$.
\end{definition}

For slice regular functions on the quaternionic unit ball $\B$, the Schwarz lemma and its boundary version were proven in~\cite{advances,rigidity}. Slice regular analogs of the M\"obius transformations of $\B$ have been introduced and studied in~\cite{volumeindam,fixedmoebius,moebius}, leading to the generalization of the Schwarz--Pick lemma in~\cite{BSIndiana}. Other results concerning slice regular functions on $\B$ have been published in \cite{alpaybolotnikovcolombosabadini,invariantmetrics,bohrregular,blochlandau,galgonzalezsabadini2,galgonzalezsabadini,landautoeplitz}.

Within this rich panorama, the present work establishes the quaternionic counterparts of the following results due to Landau \cite{landau1929,librolandau}, which we quote in the form of~\cite{libroheins}, \S 2.10.

\begin{theorem}[Landau]\label{landau1}
For each $a \in (0,1)$, let $\Phi_a$ denote the set of holomorphic self-maps $f$ of the complex unit disk $\Delta(0,1)$ such that $f(0)=0, |f'(0)|=a$. Let
\begin{align*}
&\rho:=\inf_{f \in \Phi_a} r(f)\,,\\
&r(f):= \sup\{r \in (0,1)\, |\, f\mathrm{\ is\ injective\ in\ }\Delta(0,r)\}	\,.
\end{align*}
Then $\rho=\frac{1-\sqrt{1-a^2}}{a}$. Furthermore, for $f\in \Phi_a$ the equality $r(f)=\rho$ holds if, and only if, there exists $\eta \in \partial\Delta(0,1)$ such that $f(z) = F(\eta z)\eta^{-1}$ where
\[F(z):=z\frac{a-z}{1-az}\,.\]
\end{theorem}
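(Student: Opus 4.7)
The plan is first to normalize, by replacing $f\in\Phi_a$ with $\bar\eta\,f$ where $\eta:=f'(0)/a$, so that $f'(0)=a>0$; this transformation preserves $r(f)$. By the Schwarz lemma applied to $f$, I factor $f(z)=z\,\tilde f(z)$ for a holomorphic self-map $\tilde f$ of $\Delta(0,1)$ with $\tilde f(0)=a$. The theorem then reduces to controlling when the product $z\,\tilde f(z)$ can fail to be injective on a disk $\Delta(0,r)$.

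Assume $f(\alpha)=f(\beta)=:c$ for distinct $\alpha,\beta\in\Delta(0,1)$. I would extract two complementary estimates on $|c|$. First, a Schwarz--Pick-type bound specialized to non-injective maps: the function $\psi(z):=(f(z)-c)/(1-\bar c\,f(z))$ is a self-map of $\Delta(0,1)$ vanishing at both $\alpha$ and $\beta$, so dividing it by the Blaschke product with those zeros and applying the maximum principle yields $|c|\le|\alpha\beta|$ after evaluating at $z=0$. Second, the classical Schwarz--Pick inequality applied to $\tilde f$ between $0$ and $\alpha$: the image $\tilde f(\alpha)$ lies in the pseudohyperbolic disk of radius $|\alpha|$ centered at $a$, whose point of minimum modulus is $(a-|\alpha|)/(1-a|\alpha|)$ when $|\alpha|<a$. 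Since $c=\alpha\,\tilde f(\alpha)$ this yields $|c|\ge|\alpha|\,(a-|\alpha|)/(1-a|\alpha|)$, with the analogous bound for $\beta$.

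Combining the two bounds with $r:=\max(|\alpha|,|\beta|)$, say $|\alpha|\le|\beta|=r$, and dividing by $|\alpha|$, I obtain $(a-|\alpha|)/(1-a|\alpha|)\le r$; monotonicity of $x\mapsto(a-x)/(1-ax)$ then forces $(a-r)/(1-ar)\le r$, equivalently $ar^2-2r+a\le 0$. Since this quadratic has roots $\rho$ and $1/\rho$, the inequality requires $r\ge\rho$. Hence $f$ is injective on $\Delta(0,\rho)$, and $r(f)\ge\rho$ for every $f\in\Phi_a$.

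To close the infimum, I would examine $F(z)=z(a-z)/(1-az)$ directly. A short computation gives $F(z_1)-F(z_2)=(z_1-z_2)[a(1+z_1z_2)-(z_1+z_2)]/[(1-az_1)(1-az_2)]$, so $F(z_1)=F(z_2)$ with $z_1\ne z_2$ is equivalent to $z_2=g(z_1)$, where $g(z):=(a-z)/(1-az)$ is a M\"obius involution of $\Delta(0,1)$ satisfying $g(\rho)=\rho$ and $g'(\rho)=-1$. Because $g(0)=a>\rho$, the image $g(\overline{\Delta(0,\rho)})$ is a closed Euclidean disk internally tangent to $\partial\Delta(0,\rho)$ at $\rho$ but lying in $\{|z|\ge\rho\}$, so $g(\Delta(0,\rho))\cap\Delta(0,\rho)=\emptyset$; combined with $F'(\rho)=0$ this gives $r(F)=\rho$. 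The main obstacle I anticipate is the rigidity step identifying the extremals: $r(f)=\rho$ forces equality in both Schwarz--Pick estimates in the limit as suitable pairs $(\alpha_n,\beta_n)$ with $f(\alpha_n)=f(\beta_n)$ approach the boundary fixed point, and the rigidity of Schwarz--Pick then pins $\tilde f$ down to be the specific M\"obius $g$; reinstating the initial rotation $\eta$ yields $f(z)=F(\eta z)\eta^{-1}$.
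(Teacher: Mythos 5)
The paper does not actually prove Theorem~\ref{landau1}: it quotes it from Landau/Heins, and what it proves is the quaternionic analogue, Theorem~\ref{landau}. Your strategy is essentially the complex shadow of that proof. Your first estimate $|c|\le|\alpha\beta|$ is the one-variable case of Theorems~\ref{schwarzpickvariant} and~\ref{globaltolocal} evaluated at the origin, and your second estimate $|c|\ge|\alpha|\,\frac{a-|\alpha|}{1-a|\alpha|}$ is the left-hand inequality of Theorem~\ref{minmax}. The derivation of $r(f)\ge\rho$ from the two bounds is correct (one small patch: before dividing by $|\alpha|$ you must dispose of $\alpha=0$, which is easy since then $\tilde f(\beta)=0$ forces $|\beta|\ge a>\rho$), and so is the verification that $r(F)=\rho$; for the disjointness $g(\Delta(0,\rho))\cap\Delta(0,\rho)=\emptyset$ the clean argument is $|g(z)|\ge\frac{a-|z|}{1-a|z|}\ge\frac{a-\rho}{1-a\rho}=\rho$ for $|z|\le\rho$, rather than the tangency picture.

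The genuine gap is the characterization of the extremals, which you yourself flag. Two points are missing. First, when $r(f)=\rho$ the witnessing pairs $(\alpha_n,\beta_n)$ may collide in the limit at a critical point $q_0$ with $|q_0|=\rho$; your first estimate, as derived, requires two \emph{distinct} zeros of $\psi$, so you need its degenerate version (divide by the square of a single Blaschke factor at a double zero). This dichotomy is exactly what Proposition~\ref{violationofinjectivity} and the proof of Theorem~\ref{globaltolocal} organize. Second, ``rigidity of Schwarz--Pick in the limit'' is not the mechanism that closes the argument; the efficient route, as in the paper, is to observe that either case produces a point $q_0$ with $|q_0|=\rho$ and $|f(q_0)|\le\rho^2$, while the lower bound gives $|f(q_0)|\ge\rho\frac{a-\rho}{1-a\rho}=\rho^2$, and then to invoke the equality analysis of Theorem~\ref{minmax}: equality at a single point forces $\tilde g=M_a\circ\tilde f$ to be a rotation, hence $f(z)=zM_{-a}(uz)=z\frac{a-\eta z}{1-a\eta z}=F(\eta z)\bar\eta$ with $\eta=-u$. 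Finally, note that your initial normalization $f\mapsto\bar\eta_0 f$ reintroduces a unimodular factor at the end: you would obtain $f(z)=\eta_0F(\eta z)\bar\eta$, which is not of the stated form $F(\eta z)\eta^{-1}$ unless $\eta_0=1$ (the stated family all have $f'(0)=a$ exactly). You need to track that factor explicitly against the precise normalization of $\Phi_a$; this is the same phenomenon that leads Theorem~\ref{landau} to allow an arbitrary regular M\"obius transformation $\mr$ with $|\mr(0)|=a$.
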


\begin{theorem}[Landau]\label{landau2}
For each $a \in (0,1)$, let
\begin{align*}
&P:=\inf_{f \in \Phi_a} R(f)\,,\\
&R(f):= \sup\{r\, |\, \exists\, \Omega_r\subseteq\Delta(0,1)\mathrm{\ s.t.\ }0\in\Omega_r\mathrm{\ and\ } f:\Omega_r\to\Delta(0,r)\mathrm{\ is\ bijective}\}\,.
\end{align*}
Then $P=\rho^2$ with $\rho=\frac{1-\sqrt{1-a^2}}{a}$. Furthermore, for $f\in \Phi_a$ it holds $R(f)=\rho^2$ if, and only if, there exists $\eta \in \partial\Delta(0,1)$ such that $f(z) = F(\eta z)\eta^{-1}$.
\end{theorem}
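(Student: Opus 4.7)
The plan is to combine two ingredients: Theorem~\ref{landau1}, which gives injectivity of every $f\in\Phi_a$ on $\Delta(0,\rho)$, and a sharp Schwarz--Pick lower bound for the auxiliary function $g(z):=f(z)/z$. The function $g$ extends holomorphically to $\Delta(0,1)$ with $g(0)=f'(0)$ and $|g(0)|=a$, and $|g|\le 1$ by Schwarz applied to $f$. Applying the Schwarz--Pick lemma to $g$ based at $0$, the image $g(\Delta(0,\rho))$ is confined to the pseudohyperbolic disk of radius $\rho$ centred at $g(0)$. Unwinding this in Euclidean terms, the minimum modulus on that disk equals $(a-\rho)/(1-a\rho)$. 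The identity $a(1+\rho^2)=2\rho$, satisfied by $\rho=(1-\sqrt{1-a^2})/a$, rearranges to $(a-\rho)/(1-a\rho)=\rho$, so $|g(z)|\ge\rho$ for all $|z|\le\rho$. Hence $|f(z)|\ge\rho|z|$ on $\Delta(0,\rho)$ and, in particular, $|f(z)|\ge\rho^2$ on $\partial\Delta(0,\rho)$. Since $f$ is a biholomorphism of $\Delta(0,\rho)$ onto its image, the argument principle forces $f(\Delta(0,\rho))\supseteq\Delta(0,\rho^2)$, and $\Omega:=f^{-1}(\Delta(0,\rho^2))\cap\Delta(0,\rho)$ maps bijectively onto $\Delta(0,\rho^2)$. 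Thus $R(f)\ge\rho^2$ for every $f\in\Phi_a$, giving $P\ge\rho^2$.

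For the sharp upper bound I would analyse the extremal candidate $F(z)=z(a-z)/(1-az)$. A direct computation gives $F'(z)=(a(1+z^2)-2z)/(1-az)^2$, whose only zero inside $\Delta(0,1)$ is $\rho$, with critical value $F(\rho)=\rho^2$; moreover $|F(e^{i\theta})|=1$, so $F$ is a Blaschke product of degree two, hence a proper degree-two self-map of $\Delta(0,1)$. Consequently $F^{-1}(\Delta(0,r))$ is a two-sheeted covering of $\Delta(0,r)$. For $r<\rho^2$ the critical value $\rho^2$ is exterior to $\Delta(0,r)$, so $F^{-1}(\Delta(0,r))$ splits into two connected components, each mapped biholomorphically; the one containing $0$ serves as $\Omega_r$. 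As $r\uparrow\rho^2$ these two components pinch together at the critical point $\rho$, and for $r>\rho^2$ they merge into a single component on which $F$ is two-to-one, precluding any bijective restriction. Therefore $R(F)=\rho^2$ and $P\le\rho^2$.

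For rigidity, if $R(f)=\rho^2$ then the lower bound $|f(z)|\ge\rho^2$ on $\partial\Delta(0,\rho)$ cannot be strict, so equality holds in Schwarz--Pick at some boundary point, forcing $g$ to be a disk automorphism with $g(0)=f'(0)$. A short algebraic identification of such $g$, together with the freedom to rotate by $\eta\in\partial\Delta(0,1)$, produces the stated normal form $f(z)=F(\eta z)\eta^{-1}$. The step I expect to be most delicate is the upper bound: showing that no clever choice of domain $\Omega\not\subseteq\Delta(0,\rho)$ can beat the bound $R(F)\le\rho^2$ relies precisely on the degree-two properness of $F$ and the unique location of its interior critical point.
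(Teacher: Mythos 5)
You should first note that the paper itself contains no proof of Theorem~\ref{landau2}: it is quoted from Landau (in the form of Heins, \S 2.10), so the only internal object to compare against is the proof of its quaternionic analogue, Theorem~\ref{landau}. Measured against that, your argument is essentially the same strategy read back into $\cc$: the factorization $f(z)=zg(z)$ with $|g|\le 1$ and the M\"obius/Schwarz--Pick distortion bound for $g$ are exactly the mechanism of Theorem~\ref{minmax} and inequality~\eqref{eq:minmax}; the identity $(a-\rho)/(1-a\rho)=\rho$ and the fact that $F$ has its unique interior critical point at $\rho$ with critical value $\rho^2$ reappear verbatim in Lemma~\ref{extremalcase}; your Rouch\'e/degree argument for $f(\Delta(0,\rho))\supseteq\Delta(0,\rho^2)$ is the complex shadow of the connected-component argument in part~2 of the proof of Theorem~\ref{landau}; and your rigidity step (equality in the minimum-modulus bound forces $g$ to be an automorphism) matches the implication $(b)\Rightarrow(c)$ there. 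The ingredients and their order of deployment coincide; the proposal is correct in substance.

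One step deserves tightening, and it is the one you yourself flag. For $R(F)\le\rho^2$ you argue that when $r>\rho^2$ the two sheets of $F^{-1}(\Delta(0,r))$ merge into a single component on which $F$ is two-to-one, and that this ``precludes any bijective restriction.'' As stated that is not yet a proof: a two-to-one proper map of a component could conceivably still restrict to a bijection on a proper subdomain. The clean way to close the gap is to note that $a(1+\rho^2)=2\rho$ turns the equation $F(z)=\rho^2$ into $(z-\rho)^2=0$, so $\rho$ is the \emph{only} preimage of $\rho^2$ in $\Delta(0,1)$; hence any domain $\Omega_r$ mapped onto $\Delta(0,r)$ with $r>\rho^2$ must contain $\rho$, where $F'$ vanishes and $F$ is locally two-to-one, contradicting injectivity. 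With that observation inserted, the upper bound and the computation $R(F)=\rho^2$ are complete, and the rest of your argument goes through.
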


Besides their independent interest, Theorems~\ref{landau1} and~\ref{landau2} can be used to prove one of the most celebrated results in complex function theory:

\begin{theorem}[Bloch--Landau]\label{complexblochlandau}
Let $f$ be a holomorphic function on a region containing the closure of $\Delta(0,1)$ and suppose $f(0)=0, f'(0)=1$. Then there is a disk $S\subseteq\Delta(0,1)$ on which $f$ is injective and such that $f(S)$ contains a disk of radius $b > 1/72$.
\end{theorem}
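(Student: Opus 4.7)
The plan is to follow Landau's classical rescaling argument, which reduces the problem to a normalized self-map of $\Delta(0,1)$ to which Theorems~\ref{landau1} and~\ref{landau2} can be applied. I would begin by considering the continuous function $h(z):=(1-|z|)|f'(z)|$ on $\overline{\Delta(0,1)}$: it vanishes on the unit circle and satisfies $h(0)=1$, so it attains its maximum $M\geq 1$ at some interior point $z_0$. Setting $\delta:=(1-|z_0|)/2$, the estimate $h\leq M$ gives $|f'|\leq M/\delta$ on $\overline{\Delta(z_0,\delta)}\subseteq\Delta(0,1)$, while $|f'(z_0)|=M/(2\delta)$. With $\lambda:=f'(z_0)/|f'(z_0)|$, these estimates imply that the rescaled function
\[g(w):=\overline\lambda\,\bigl[f(z_0+\delta w)-f(z_0)\bigr]/M\]
is a holomorphic self-map of $\Delta(0,1)$ with $g(0)=0$ and $g'(0)=1/2$, i.e., $g\in\Phi_{1/2}$.

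I would then apply Theorems~\ref{landau1} and~\ref{landau2} to $g$ with $a=1/2$, producing $\rho=2-\sqrt 3$ and $\rho^2=7-4\sqrt 3$. Theorem~\ref{landau1} guarantees that $g$ is injective on $\Delta(0,\rho)$, hence that $f$ is injective on the disk $S:=\Delta(z_0,\delta\rho)\subseteq\Delta(0,1)$. Theorem~\ref{landau2} guarantees, for every $r<\rho^2$, an open set $\Omega_r\subseteq\Delta(0,1)$ with $0\in\Omega_r$ and $g|_{\Omega_r}$ a biholomorphism onto $\Delta(0,r)$. Taking $\Omega_r$ to be the connected component of $g^{-1}(\Delta(0,r))$ containing $0$, I would verify that $\Omega_r\subseteq\Delta(0,\rho)$ whenever $r<\rho^2$, so that $g(\Delta(0,\rho))\supseteq\Delta(0,\rho^2)$. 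Since $f(z)=\lambda M g((z-z_0)/\delta)+f(z_0)$, this entails $f(S)\supseteq\Delta(f(z_0),M\rho^2)$, a disk of radius at least $\rho^2=7-4\sqrt 3>1/72$.

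The main obstacle is proving the sub-claim $\Omega_r\subseteq\Delta(0,\rho)$, equivalently the extremal estimate $\min_{|z|=\rho}|g(z)|\geq\rho^2$ which forces the boundary of $g(\Delta(0,\rho))$ to lie outside $\Delta(0,\rho^2)$. I would establish this bound via the Schwarz--Pick lemma applied to the auxiliary function $\phi(z):=g(z)/z$, which is a holomorphic self-map of $\Delta(0,1)$ with $\phi(0)=1/2$ (the apparent singularity at $0$ being removable thanks to $g(0)=0$). The pseudo-hyperbolic bound produces $|\phi(z)|\geq(1/2-|z|)/(1-|z|/2)$, and the right-hand side evaluates exactly to $\rho$ when $|z|=\rho$; this is the same numerical coincidence that underlies the extremal function $F(z)=z(1/2-z)/(1-z/2)$ of Theorems~\ref{landau1}--\ref{landau2}, and it yields $|g(z)|\geq\rho\cdot\rho=\rho^2$ on $|z|=\rho$, closing the proof.
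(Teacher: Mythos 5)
Your argument is correct. Note that the paper does not actually supply a proof of Theorem~\ref{complexblochlandau}: it quotes it as a classical result and refers to Conway's book for a proof ``based on reducing to bounded functions and applying a variant of Theorem~\ref{landau2}''. Your renormalization via $h(z)=(1-|z|)|f'(z)|$, the choice of $z_0$, $\delta=(1-|z_0|)/2$ and the rescaled map $g\in\Phi_{1/2}$ is exactly that classical scheme, and it is also the scheme the authors themselves follow in Section~\ref{sec:bloch} to prove the quaternionic analogue (Theorem~\ref{RegTras}). The one step that genuinely needs an extra idea is the passage from Theorem~\ref{landau2} --- which only asserts the existence of \emph{some} domain $\Omega_r$ mapped bijectively onto $\Delta(0,r)$ --- to the covering statement $g(\Delta(0,\rho))\supseteq\Delta(0,\rho^2)$; you correctly isolate this and close it with the Schwarz--Pick lower bound $|g(z)|\ge |z|\,\frac{a-|z|}{1-a|z|}$, which is precisely the complex prototype of the paper's Theorem~\ref{minmax}, combined with the fixed-point identity $\frac{a-\rho}{1-a\rho}=\rho$ of Lemma~\ref{extremalcase}. (Once the boundary estimate $|g|\ge\rho^2$ on $|z|=\rho$ is available, Theorem~\ref{landau2} is in fact dispensable: the open-mapping/boundary argument already gives the covering, exactly as in part~2 of the paper's Theorem~\ref{landau}; and the identification of $\Omega_r$ with the connected component of the preimage is harmless, since any $\Omega_r$ furnished by Theorem~\ref{landau2} is a connected set containing $0$ and hence sits inside that component.) Your computation actually yields a disk of radius $M\rho^2\ge 7-4\sqrt3\approx 0.0718$, substantially better than the stated $1/72$; the weaker constant in the statement reflects Conway's non-sharp auxiliary lemmas, and the analogous loss in the quaternionic setting (the factor $2$ in Lemma~\ref{selfmap}, which forces $a=1/4$ instead of $1/2$) is what produces the constant $1/31$ in Theorem~\ref{RegTras}.
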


The reference \cite{libroconway} presents in Ch. XII \S1 a proof of the Bloch--Landau theorem based on reducing to bounded functions and applying to them a variant of Theorem~\ref{landau2}. The largest value of $b$ for which Theorem~\ref{complexblochlandau} holds is known as \emph{Bloch's constant}. As it is well-known, determining this constant is still a challenging problem nowadays.

While the situation is considerably different in the case of several complex variables~\cite{LandauSCV,chengauthier,durenrudin,harris}, variants of Theorem~\ref{complexblochlandau} hold in the theory of slice regular functions, see~\cite{blochlandau}, and in other hypercomplex generalizations of the theory of one complex variable: see~\cite{rochon} for the class of \emph{T-holomorphic} functions over the bicomplex numbers; and~\cite{guerlebeckmorais2012} for square integrable \emph{monogenic} functions over the reduced quaternions. As we already mentioned, in the present work we establish perfect analogs of Theorems~\ref{landau1} and~\ref{landau2} for slice regular functions. Other original results are proven along with them and a new version of Theorem~6 of~\cite{blochlandau} is obtained as an application. The paper is structured as follows.

In Section~\ref{sec:preliminaries}, we recall some preliminary material needed for our study, including the algebraic structure of slice regular functions: a ring structure with the usual addition $+$ and a multiplication $*$, as well as the existence of a multiplicative inverse $f^{-*}$ when $f\not\equiv0$. This structure is the basis for the construction of \emph{regular M\"obius transformations}, namely, $\mr_{q_0}(q) u$ for $q_0 \in \B, u \in \partial \B$, where 
\[\mr_{q_0}(q):=(1-q\bar q_0)^{-*}*(q-q_0)\,.\]
We recall some known results on the differential of a slice regular function and we derive a characterization of functions that are not injective. The quaternionic Schwarz--Pick lemma is also recalled in detail.

Section~\ref{sec:schwarzpick} establishes two variants of the quaternionic Schwarz--Pick lemma, specialized to self-maps of the quaternionic unit ball that are not injective. The first one is:

\begin{theorem}
Let $f : \B \to \B$ be a regular function, let $q_0 \in \B$ and set $v:=f(q_0)$. If $f$ is not injective in any neighborhood of $f^{-1}(v)$ then there exists $p_0 \in \B$ such that
\[|(f(q) -v) * (1 - \bar v * f(q))^{-*}| \leq |\mr_{q_0} (q) * \mr_{p_0} (q)|.\]
\end{theorem}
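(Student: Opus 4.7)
The plan is to reduce to a regular self-map of $\B$ that vanishes at $q_0$ and then extract two M\"obius factors, one at $q_0$ and one at a second point $p_0$ supplied by the non-injectivity hypothesis. I would first set
\[F(q) := (f(q)-v)*(1-\bar v*f(q))^{-*},\]
which, by the construction underlying the quaternionic Schwarz--Pick lemma recalled in Section~\ref{sec:preliminaries}, is a regular self-map of $\B$ with $F(q_0)=0$. Since at each point the assignment $f(q)\mapsto F(q)$ amounts to applying the regular M\"obius transformation sending $v$ to $0$---a bijection of $\B$---the hypothesis on $f$ transfers verbatim to $F$: the function $F$ fails to be injective in any neighborhood of $F^{-1}(0)$.

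Because $F:\B\to\B$ is regular with $F(q_0)=0$, the factor theorem combined with the Schwarz lemma produces a regular function $F_1:\B\to\B$ such that $F=\mr_{q_0}*F_1$. The decisive step is then to locate a zero $p_0\in\B$ of $F_1$; for this I would invoke the characterization of non-injective regular functions derived in Section~\ref{sec:preliminaries}. Two cases arise. If $F^{-1}(0)$ contains some point $q_0'\ne q_0$, then expanding $0=F(q_0')=\mr_{q_0}(q_0')*F_1(q_0')$ together with $\mr_{q_0}(q_0')\ne0$ extracts, via the $*$-product zero rule, a zero of $F_1$ at an appropriate conjugate of $q_0'$. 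If instead $F^{-1}(0)=\{q_0\}$ while $F$ still fails to be locally injective at $q_0$, the characterization forces $\partial_c F(q_0)=0$, and unwinding $F=\mr_{q_0}*F_1$ at $q_0$ shows that this is equivalent to $F_1(q_0)=0$; in this case we take $p_0=q_0$.

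With such a $p_0$ in hand, a second application of the factorization yields $F_1=\mr_{p_0}*F_2$ with $F_2:\B\to\B$ regular, whence $F=\mr_{q_0}*\mr_{p_0}*F_2$. The bound $|F_2|\le1$ coming from the Schwarz lemma, combined with the standard pointwise identity for the modulus of a $*$-product, then delivers
\[|F(q)|\le|(\mr_{q_0}*\mr_{p_0})(q)|,\]
which is precisely the claimed inequality.

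The main obstacle I foresee is the middle step: cleanly producing $p_0$ from the non-injectivity hypothesis. The zero set of a regular function on $\B$ may contain isolated real points, isolated non-real points, and entire ``spherical'' $2$-spheres, while the zeros of a $*$-product are related to those of its factors only up to conjugation. Verifying that $F_1$ genuinely vanishes at a point of $\B$---rather than at some conjugate lying outside $\B$---is the delicate part, and this is precisely what the non-injectivity characterization of Section~\ref{sec:preliminaries} is tailored to deliver.
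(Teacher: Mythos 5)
Your overall strategy---pass to $F=(f-v)*(1-\bar v*f)^{-*}$, strip off a M\"obius factor at $q_0$, locate a second zero $p_0$ of the quotient, strip off $\mr_{p_0}$, and conclude via the Schwarz lemma and the modulus comparison for $*$-products---is the same as the paper's. The gap is exactly in the step you flag as delicate: your dichotomy for producing $p_0$ is incomplete. You argue that either $F^{-1}(0)$ contains a second point $q_0'\neq q_0$, or else $F^{-1}(0)=\{q_0\}$ and local non-injectivity ``forces $\partial_cF(q_0)=0$.'' The second claim is false. By Proposition~\ref{noninvertibledifferential}, failure of local injectivity at $q_0$ only forces a factorization $F(q)=(q-q_0)*(q-q_1)*g(q)$ with $q_1\in S_{q_0}$; the Cullen derivative vanishes precisely when $q_1=q_0$ and the spherical derivative precisely when $q_1=\bar q_0$, but $q_1$ may be any other point of the sphere $S_{q_0}$. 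Concretely, take $q_0=I/2$, $q_1=J/2$ with $J\in\s\setminus\{\pm I\}$, and $f(q)=\big(q^2-q\frac{I+J}{2}+\frac{IJ}{4}\big)\frac13=(q-q_0)*(q-q_1)\frac13$, with $v=0$ (so $F=f$). Then $f$ maps $\B$ to $\B$, $f^{-1}(0)=\{q_0\}$ because $q_1\in S_{q_0}\setminus\{\bar q_0\}$, and $f$ is not injective near $q_0$ by Proposition~\ref{noninvertibledifferential} and Theorem~\ref{singular}; yet $\partial_cf(q_0)=\frac{I-J}{6}\neq0$ and $\partial_sf(q_0)\neq0$. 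Your Case~1 does not apply and your Case~2 reaches a false conclusion, so no $p_0$ is produced. The correct $p_0$ here is the conjugate $(1-q_1q_0)^{-1}q_1(1-q_1q_0)\in S_{q_0}$, in general distinct from both $q_0$ and $\bar q_0$.

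The paper sidesteps this by never case-splitting on the zero set of $F$: it applies Proposition~\ref{violationofinjectivity} to $f$ itself to obtain $f=v+(q-q_0)*(q-q_1)*g$, observes that then $\mr_{q_0}^{-*}*F=(1-q\bar q_0)*(q-q_1)*g*(1-\bar v*f)^{-*}$, and reads off its zero $p_0=(1-q_1q_0)^{-1}q_1(1-q_1q_0)\in S_{q_1}$ directly from Theorem~\ref{formulae}. This also repairs a secondary weakness in your write-up: the claim that the non-injectivity hypothesis ``transfers verbatim'' to $F$ because $F(q)$ is a pointwise M\"obius image of $f(q)$ is not valid as stated, since the $*$-quotient is not a pointwise operation ($F(q)$ depends on the values of $f$ on the whole sphere $S_q$, so $f(p)=f(p')$ does not immediately give $F(p)=F(p')$). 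The transferred statement happens to be true, but the clean route---and the one that also delivers $p_0$ in all cases---is to factor $f-v$ first and carry the two linear factors into $F$.
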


A second variant can be derived from the former:

\begin{theorem}
Let $f \colon \B \to \B$ be a slice regular function, which, for some $r \in (0,1)$, is injective in ${B}(0,r):=\{q \in \hh: |q|<r\}$ but is not injective in ${B}(0,r')$ for any $r'>r$.
Then there exists $q_0 \in \partial B(0,r)$ such that $f$ is not injective in any neighborhood of $f^{-1}(f(q_0))$ and
\[
|(f(q) -f(q_0)) * (1 - \overline{f(q_0)}* f(q))^{-*}| \leq |\mr_{q_0}(q) * \mr_{p_0} (q)|.
\]
for some $p_0 \in \overline{B(0,r)}$. In particular, if $f(0) = 0$ then $|f(q_0)| \leq r^2$.
\end{theorem}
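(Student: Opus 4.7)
The plan is to reduce this statement to the previous theorem by locating a point $q_0\in\partial B(0,r)$ at which the failure of injectivity of $f$ accumulates, and then tracking the companion point $p_0$ supplied by that theorem.

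\emph{Constructing $q_0$ and a candidate $p_0$.} For each integer $n\ge 1$, the non-injectivity of $f$ on $B(0,r+1/n)$ yields distinct points $a_n,b_n\in B(0,r+1/n)$ with $f(a_n)=f(b_n)$. Injectivity on $B(0,r)$ forces at least one of the two into $\overline{B(0,r+1/n)}\setminus B(0,r)$, so after relabeling one may assume $r\le|b_n|<r+1/n$. A compactness argument extracts subsequences $b_n\to q_0$ with $|q_0|=r$ and $a_n\to p_0$ with $|p_0|\le r$; continuity gives $f(q_0)=f(p_0)=:v$. For any open $U\supseteq f^{-1}(v)$, the pairs $(a_n,b_n)$ eventually lie in $U$ and witness the failure of injectivity there, so the hypothesis of the previous theorem is satisfied at $q_0$.

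\emph{Invoking the previous theorem.} It yields some $\tilde p_0\in\B$ for which
\[
|(f(q)-v)*(1-\bar v * f(q))^{-*}| \le |\mr_{q_0}(q) * \mr_{\tilde p_0}(q)|.
\]
To strengthen this to $\tilde p_0\in\overline{B(0,r)}$, the plan is to revisit how $\tilde p_0$ is produced in the proof of that theorem: it arises as an accumulation point of a second sequence of pre-images witnessing the non-injectivity. The configurations $(a_n,b_n)$ above supply exactly such an accumulation, namely $p_0$, which already satisfies $|p_0|\le r$; in the degenerate case $p_0=q_0$ this corresponds to a double $*$-zero of $f-v$ at $q_0$ and still $|p_0|=r$. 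Hence the choice $\tilde p_0=p_0$ will be available.

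\emph{Concluding the case $f(0)=0$.} Evaluating the estimate at $q=0$ and using the basic identity $(g*h)(0)=g(0)\,h(0)$ for slice regular $g,h$, the left-hand side collapses to $|f(0)-v|\cdot|1-\bar v f(0)|^{-1}=|v|=|f(q_0)|$, while the right-hand side collapses to $|\mr_{q_0}(0)\,\mr_{p_0}(0)|=|{-q_0}|\,|{-p_0}|=|q_0|\,|p_0|\le r^2$; this delivers $|f(q_0)|\le r^2$. The main obstacle will be the identification in the previous paragraph: ensuring that the companion point $\tilde p_0$ furnished by the general statement can be taken in $\overline{B(0,r)}$. This depends on constructive details of the previous theorem's proof rather than its bare statement, and may require either tracking the specific sequences used there, or re-proving a sharpened variant adapted to the present setup.
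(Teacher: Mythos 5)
Your overall strategy is the same as the paper's: extract pairs of non-injectivity witnesses $a_n,b_n\in B(0,r+1/n)$, use injectivity on $B(0,r)$ to push one of them to the boundary, pass to subsequential limits $b_n\to q_0\in\partial B(0,r)$ and $a_n\to p\in\overline{B(0,r)}$ with $f(q_0)=f(p)=v$, observe that the pairs themselves show $f$ is not injective in any neighborhood of $f^{-1}(v)$, invoke Theorem~\ref{schwarzpickvariant}, and evaluate both sides at $q=0$. Your uniform sequence argument for the non-injectivity claim is fine (the paper splits into the cases $p\neq q_0$ and $p=q_0$, but your version covers both at once), and the evaluation at the origin is correct, since $T_h(0)=0$ gives $|h^{-*}(0)|=|h(0)|^{-1}=1$ and $\mr_{q_0}*\mr_{p_0}(0)=q_0p_0$.

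The step you flag as an obstacle is a genuine gap, and the mechanism you propose for closing it is not the right one: in Theorem~\ref{schwarzpickvariant} the companion point is \emph{not} an accumulation point of pre-images but is given explicitly by the ``Namely'' clause as $p_0=(1-q_1q_0)^{-1}q_1(1-q_1q_0)\in S_{q_1}$, where $q_1$ comes from a factorization $f(q)=v+(q-q_0)*(q-q_1)*g(q)$. So the point to prove is that this factorization can be arranged with $q_1\in S_p$ (equivalently, $|q_1|\leq r$); then $p_0\in S_{q_1}=S_p$ has $|p_0|=|p|\leq r$, which is all the final estimate needs --- note that $p_0$ need not equal your limit point $p$, only share its modulus. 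This choice of $q_1$ is available in every case: writing $f-v=(q-q_0)*h$, if $p\notin S_{q_0}$ then Theorem~\ref{formulae} forces $h$ to vanish at the point $(p-q_0)^{-1}p(p-q_0)\in S_p$, which serves as $q_1$; if $p\in S_{q_0}$ with $p\neq q_0$ then $f-v$ has two distinct zeros on $S_{q_0}$, hence positive spherical multiplicity there by Theorem~\ref{factorization}, and one may take $q_1=\bar q_0$; and if $p=q_0$ then your sequences force $q_0\in N_f$, so Proposition~\ref{noninvertibledifferential} supplies $q_1\in S_{q_0}=S_p$. This is exactly how the paper resolves the issue, and it uses only the stated form of Theorem~\ref{schwarzpickvariant}, not an inspection of its proof. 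With this supplement your argument becomes complete and coincides with the paper's.
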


We add, in Section~\ref{sec:bounds}, some useful estimates for slice regular self-maps of the unit ball fixing the origin:

\begin{theorem}
Let $f : \B \to \B$ be a slice regular function with $f(0)=0$. If $a := |\partial_cf(0)|$ belongs to $(0,1)$ then
\begin{equation}
|q|\frac{a-|q|}{1-a|q|} \leq |f(q)| \leq |q|\frac{|q|+a}{1+a|q|}.
\end{equation}
for all $q \in \B$. Furthermore, if there exists $q \in \B$ such that equality holds on the left-hand side or on the right-hand side, then $f(q) = q \mr(q)$ where $\mr$ is a regular M\"obius transformation of $\B$ with $|\mr(0)| = a$.
\end{theorem}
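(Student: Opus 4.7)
My plan is to factor $f$ as a regular product involving the coordinate $q$ and then bound the other factor via a Schwarz--Pick-type estimate. Since $f(0)=0$, the power series expansion of $f$ at the origin yields a unique slice regular function $g \colon \B \to \hh$ with $f(q) = q * g(q)$; computing the leading coefficient gives $g(0) = \partial_c f(0)$, so $|g(0)|=a$. Because the factor $A(q)=q$ satisfies $A(q)^{-1} q A(q) = q$, the pointwise formula for the regular product simplifies to $(q*g)(q) = q\cdot g(q)$, hence $|f(q)| = |q|\,|g(q)|$. The quaternionic Schwarz lemma gives $|f(q)| \le |q|$, so $|g(q)|\le 1$ on $\B \setminus\{0\}$ and, by continuity, on all of $\B$. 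Thus $g$ is a slice regular self-map of $\overline\B$ with $|g(0)|=a$.

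The core of the proof is then a Schwarz--Pick-type estimate for $g$, namely
\[
\frac{a-|q|}{1-a|q|} \;\le\; |g(q)| \;\le\; \frac{a+|q|}{1+a|q|},
\]
where the lower bound is informative only for $|q|<a$. This is the natural Euclidean bound on a slice regular self-map of $\overline\B$ with $|g(0)|=a$, and I plan to derive it from the quaternionic Schwarz--Pick lemma recalled in Section~\ref{sec:preliminaries}, namely
\[
|(g(q) - g(0)) * (1 - \overline{g(0)} * g(q))^{-*}| \;\le\; |q|,
\]
by mimicking the classical M\"obius-inversion argument: the left-hand side measures the ``pseudo-hyperbolic distance'' between $g(q)$ and $g(0)$, and a triangle-inequality argument for the corresponding distance on $\B$ converts this bound into the displayed Euclidean estimate. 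An equivalent route is to apply the splitting lemma to write $g_{|\B_I} = F + GJ$ on a slice $L_I$ containing $q$, noting that $|g|^2 = |F|^2 + |G|^2$, and then invoke the classical Schwarz--Pick inequality for holomorphic maps from $\Delta$ into the closed unit ball of $\cc^2$.

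Once this estimate is in hand, multiplying through by $|q|$ and using $|f(q)| = |q|\,|g(q)|$ yields the two inequalities of the theorem directly. For the rigidity part, if equality holds in either bound for $|f|$ at some $q\in\B$, then equality holds for $|g|$ at the same $q$, and rigidity in the Schwarz--Pick estimate identifies $g$ with a regular M\"obius transformation $\mr$ of $\B$ having $|\mr(0)|=a$. Returning to $f$ via $f(q) = q\cdot g(q)$ then gives $f(q)=q\,\mr(q)$, as claimed.

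The main obstacle I anticipate is the derivation of the pointwise Euclidean bound on $|g|$ from the $*$-valued Schwarz--Pick inequality. Both the regular product and the regular reciprocal interact with pointwise moduli only up to a conjugation $q \mapsto A(q)^{-1} q A(q)$ on spheres of constant modulus, so the manipulation is not as transparent as in the complex case. The cleanest resolutions seem to be either the triangle inequality in a suitable hyperbolic distance on $\B$, or the splitting-lemma reduction to the classical complex Schwarz--Pick for holomorphic maps into the unit ball of $\cc^2$; either route also makes the rigidity discussion straightforward.
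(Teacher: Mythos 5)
Your overall strategy coincides with the paper's: both arguments begin by writing $f(q)=q*g(q)=q\,g(q)$ with $g(0)=\partial_cf(0)$, reduce the statement to a two-sided Euclidean bound on $|g|$, and multiply back by $|q|$. (One small point: after normalizing $g(0)=a\in(0,1)$, the bound $|g|\leq1$ combined with the Maximum Modulus Principle~\ref{MaxPrinc} gives $g(\B)\subseteq\B$ and not merely $g(\B)\subseteq\overline{\B}$, so the Schwarz--Pick machinery genuinely applies to $g$.) The step you flag as the main obstacle is resolved in the paper exactly along your first route, and the mechanism is worth making explicit: setting $\widetilde g:=(g-a)*(1-a*g)^{-*}$, which is a self-map of $\B$ vanishing at $0$ by Lemma~\ref{introducezero}, one inverts this relation and applies formula~\eqref{quozreg} to obtain the pointwise identity $g=M_{-a}\circ\widetilde g\circ T$, where $T$ is the twist of Theorem~\ref{formulae}, which maps each sphere $x+y\s$ (hence each sphere of constant modulus) onto itself. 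Since $|T(q)|=|q|$ and $|\widetilde g(p)|\leq|p|$ by the Schwarz lemma, the elementary two-sided estimate~\eqref{eq:quatmoebincrease} for the classical M\"obius map $M_{-a}$ yields both inequalities at once; equality at one point forces $\widetilde g$ to be a rotation, whence $g$ is a regular M\"obius transformation. So the ``conjugation on spheres of constant modulus'' you worry about is not an obstacle but the very reason the argument closes: no separate triangle inequality for a hyperbolic distance is needed beyond~\eqref{eq:quatmoebincrease} itself.

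Your alternative route via the splitting lemma and the distance-decreasing property of holomorphic maps from $\Delta(0,1)$ into the unit ball of $\cc^2$ does yield the two inequalities (the Kobayashi ball of radius $|z|$ about $(a,0)$ is pinched between the two stated Euclidean spheres), but the rigidity claim is not straightforward there. Equality identifies $(F,G)$ as a complex geodesic of the ball of $\cc^2$ only on the single slice $L_I$ containing the given point; you would still need to show that a regular function $g:\B\to\B$ (a global condition, not detectable on one slice, since the regular extension of a slice-bounded map need not be bounded) whose restriction to $\B_I$ is such a geodesic must be a regular M\"obius transformation of $\B$. That is a genuine additional argument, so if you pursue this write-up, carry out the first route.
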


The aforementioned theorems allow us to achieve, in Section~\ref{sec:landau}, a full generalization of Landau's results: 

\begin{theorem}
Let $f : \B \to \B$ be a slice regular function with $f(0)=0$. If $a := |\partial_cf(0)|$ belongs to $(0,1)$ and if we set $\rho := \frac{1-\sqrt{1-a^2}}{a}$ then the following properties hold.
\begin{enumerate}
\item The function $f$ is injective at least in the ball $B(0, \rho)$. 
\item For all $r \in (0, \rho)$, $B\big(0, r\frac{a-r}{1-ar}\big) \subseteq  f({B}(0,r)) \subseteq {B}\big(0, r \frac{a+r}{1+ar}\big)$. As a consequence,
\[{B}(0, \rho^2) \subseteq f(B(0, \rho) ).\]
\item The following are equivalent:
\begin{enumerate}
\item $B(0,\rho)$ is the largest ball centered at $0$ where $f$ is injective;
\item there exists a point $q_0 \in \partial {B}(0,\rho)$ with $f(q_0) \in \partial{B}(0, \rho^2)$;
\item $f(q) = q \mr(q)$ where $\mr$ is a regular M\"obius transformation of $\B$ (necessarily such that $\mr(0)=\partial_cf(0)$, whence $|\mr(0)|=a$).
\end{enumerate}
\end{enumerate}
\end{theorem}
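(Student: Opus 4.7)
The plan is to combine the bounds theorem from Section~\ref{sec:bounds} with the second Schwarz--Pick variant from Section~\ref{sec:schwarzpick}, via arguments by contradiction for the quantitative parts and via the rigidity clauses of those results for the extremal characterization. Two elementary algebraic facts will be used throughout: $\rho<a$, immediate from $\rho=\frac{1-\sqrt{1-a^2}}{a}$; and the identity $\rho\frac{a-\rho}{1-a\rho}=\rho^2$, obtained by substituting $a\rho=1-\sqrt{1-a^2}$. For Part~1, I would argue by contradiction. If $f$ is not injective in $B(0,\rho)$, set $r:=\sup\{r'\in(0,1): f \text{ is injective in } B(0,r')\}$. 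Since $|\partial_c f(0)|=a>0$, $f$ is locally injective at $0$, so $r>0$; by hypothesis $r<\rho$; and $f$ is injective in $B(0,r)$ but not in $B(0,r')$ for any $r'>r$. The second Schwarz--Pick variant yields $q_0\in\partial B(0,r)$ with $|f(q_0)|\le r^2$, while the bounds theorem gives $|f(q_0)|\ge r\frac{a-r}{1-ar}>0$ (using $r<\rho<a$). Combining, $\frac{a-r}{1-ar}\le r$, which rearranges to $ar^2-2r+a\le 0$ and forces $r\ge\rho$, a contradiction.

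For Part~2, the upper inclusion $f(B(0,r))\subseteq B\bigl(0,r\frac{a+r}{1+ar}\bigr)$ is immediate from the bounds theorem and the strict monotonicity of $t\mapsto t\frac{a+t}{1+at}$ on $(0,1)$. For the lower inclusion, set $s:=r\frac{a-r}{1-ar}>0$ and consider $U:=B(0,s)\cap f(B(0,r))$. The set $U$ is open in $B(0,s)$ by the open mapping theorem for slice regular functions. It is also closed in $B(0,s)$: if $w_n=f(q_n)\in U$ converges to $w\in B(0,s)$, a subsequence gives $q_n\to q^*\in\overline{B(0,r)}$; the case $|q^*|=r$ would force $|w|=|f(q^*)|\ge s$ by the bounds theorem, contradicting $w\in B(0,s)$, so $q^*\in B(0,r)$ and $w=f(q^*)\in U$. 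Since $0=f(0)\in U$ and $B(0,s)$ is connected, $U=B(0,s)$. The inclusion $B(0,\rho^2)\subseteq f(B(0,\rho))$ then follows from the identity above by letting $r\nearrow\rho$.

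For Part~3, I would prove the cycle (a)$\Rightarrow$(b)$\Rightarrow$(c)$\Rightarrow$(a). For (a)$\Rightarrow$(b), apply the second Schwarz--Pick variant at $r=\rho$ to obtain $q_0\in\partial B(0,\rho)$ with $|f(q_0)|\le\rho^2$; Part~2 gives $|f(q_0)|\ge\rho^2$, hence $f(q_0)\in\partial B(0,\rho^2)$. For (b)$\Rightarrow$(c), equality in the lower bound at $q_0$ triggers the rigidity clause of the bounds theorem, so $f(q)=q\,\mr(q)$ for a regular M\"obius transformation $\mr$ with $|\mr(0)|=a$; the product rule for $\partial_c$ applied to $\mr{\text{Id}}*\mr$ then yields $\partial_c f(0)=\mr(0)$. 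For (c)$\Rightarrow$(a), Part~1 already gives injectivity in $B(0,\rho)$, so it suffices to show maximality. Writing $\mr(q)=\mr_{p_0}(q)\cdot u$ with $|p_0|=a$ and $u\in\partial\B$, restrict to the slice $L_I$ containing $p_0$: on $L_I$ the $*$-product collapses to the ordinary commutative product, so $f_I(q)=q\,(1-q\bar p_0)^{-1}(q-p_0)\cdot u$. Differentiating within $L_I$ shows that $q\mapsto q(1-q\bar p_0)^{-1}(q-p_0)$ has a critical point at $q_0=\bar p_0^{-1}(1-\sqrt{1-a^2})\in L_I$, which satisfies $|q_0|=\rho$. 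Hence $f$ is not locally injective at $q_0\in\partial B(0,\rho)$, and the characterization of non-injective slice regular functions recalled in Section~\ref{sec:preliminaries} then produces distinct pre-images in every ball $B(0,r)$ with $r>\rho$.

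The main obstacle is the implication (c)$\Rightarrow$(a), where the non-commutative $*$-product and the structure of regular M\"obius transformations with a general quaternionic parameter $p_0$ must be handled with care, and where the reduction to Landau's classical extremal function on a well-chosen slice must be legitimately justified. The rest of the proof is essentially the quaternionic translation of Landau's classical strategy, made possible by the Schwarz--Pick and bounds theorems proved in the preceding sections.
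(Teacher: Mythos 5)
Your proposal is correct and follows essentially the same route as the paper: part~1 combines Theorem~\ref{globaltolocal} with the lower bound of Theorem~\ref{minmax} exactly as in the paper's proof; part~3 uses the same cycle of implications with the same ingredients (your slice computation of the critical point $q_0=\bar p_0^{-1}(1-\sqrt{1-a^2})$, of modulus $\rho$, is precisely the content of Lemma~\ref{extremalcase}, and the passage from $\partial_cf(q_0)=0$ to non-injectivity in every $B(0,r)$ with $r>\rho$ is Theorem~\ref{singular}); and part~2's lower inclusion is an open-and-closed argument on $B(0,s)\cap f(B(0,r))$ in place of the paper's connected-component-of-the-preimage argument, a cosmetic difference. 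The one point to repair is your appeal to ``the open mapping theorem for slice regular functions'' to get openness of $f(B(0,r))$: that theorem is delicate for quaternionic regular functions (it can fail at degenerate spheres), but no such generality is needed here, since $r<\rho\leq r(f)$ means $f$ is injective on $B(0,r)$ by part~1, hence has empty singular set there by Corollary~\ref{diffeo} and is a local diffeomorphism, which gives openness directly --- this is also how the paper argues.
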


In Section~\ref{sec:bloch}, as an application, we obtain a quaternionic Bloch--Landau-type result in the spirit of~\cite{blochlandau}. Although finding a full-fledged quaternionic generalization of Theorem~\ref{complexblochlandau} is still an open problem, the new approach used here opens a new path towards such a generalization, which will be the subject of future research.


\section{Preliminary material}\label{sec:preliminaries}

Let us recall some basic material on slice regular functions, see Chapter~1 in~\cite{librospringer}. 
We will henceforth use the adjective \emph{regular}, for short, to refer to slice regular functions.

\begin{proposition}
The regular functions on a Euclidean ball 
\[B(0,R):=\{q \in \hh\ |\ |q|<R\}\]
are exactly the sums of those power series $\sum_{n \in \nn} q^na_n$ (with $\{a_n\}_{n \in \nn}\subset\hh$) which converge in $B(0,R)$.
\end{proposition}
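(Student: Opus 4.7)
My plan is to prove the two implications separately, leveraging the slicewise character of regularity.

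For the "if" direction, suppose $\sum_{n\in\nn} q^n a_n$ converges on $B(0,R)$. I would fix $I\in\s$ and restrict the sum to the slice $L_I$: identifying $L_I$ with $\cc$ via $z=x+Iy$, the restriction $f_I(z)=\sum_n z^n a_n$ is a standard (right-coefficient) complex power series valued in $\hh$, converging uniformly on compact subsets of the disk $\{|z|<R\}\subset L_I$. Using the splitting $\hh=L_I\oplus L_I J$ for some $J\in\s$ with $IJ=-JI$, I can write each $a_n=\alpha_n+\beta_n J$ with $\alpha_n,\beta_n\in L_I$, so $f_I=F+GJ$ for two $L_I$-valued holomorphic power series. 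Since $F$ and $G$ are complex-holomorphic in $z$, they satisfy the Cauchy--Riemann equations on $L_I$, hence so does $f_I$; that is, $\overline{\partial}_I f\equiv 0$ on $\Omega_I$. As $I\in\s$ was arbitrary, $f$ is slice regular on $B(0,R)$.

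For the converse, let $f$ be regular on $B(0,R)$. I would fix any $I\in\s$. Since $f_I$ is holomorphic on the disk $B(0,R)\cap L_I\cong\Delta(0,R)$, the splitting lemma (same decomposition $f_I=F+GJ$ as above, with $F,G:B(0,R)\cap L_I\to L_I$ holomorphic) together with the classical complex Taylor theorem gives power series expansions
\[
F(z)=\sum_{n\in\nn}z^n\alpha_n,\qquad G(z)=\sum_{n\in\nn}z^n\beta_n,
\]
both convergent on $\Delta(0,R)$. Setting $a_n:=\alpha_n+\beta_n J\in\hh$, Abel's lemma guarantees that the quaternionic power series $\sum_n q^n a_n$ has radius of convergence at least $R$. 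By the first part, the function $g(q):=\sum_n q^n a_n$ is regular on $B(0,R)$, and by construction $g_I=f_I$ on $B(0,R)\cap L_I$.

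It remains to promote the equality $g=f$ from one slice to all of $B(0,R)$. This is exactly the role of the identity principle for slice regular functions on a connected domain (which applies here since $B(0,R)$ is connected, and $B(0,R)\cap L_I$ is a subset of $L_I$ possessing interior points in $L_I$, hence accumulation points). Thus $f\equiv g$ on $B(0,R)$, which is the desired representation. The main obstacle is conceptual rather than computational: it is to recognize that the coefficients extracted from a single slice are automatically the correct global coefficients, a fact made available precisely by the identity principle together with the splitting lemma; once these tools are invoked, no slice-by-slice compatibility check is required.
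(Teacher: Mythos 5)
Your argument is correct. Note, however, that the paper does not prove this proposition at all: it is recalled as known background material, with a pointer to Chapter~1 of the cited monograph of Gentili, Stoppato and Struppa, where the proof runs along essentially the same lines as yours (splitting $f_I=F+GJ$ with $F,G$ holomorphic, complex Taylor expansion on one slice, propagation to all of $B(0,R)$). The only genuine difference is in the final propagation step: where you invoke the identity principle for slice regular functions, the standard argument observes instead that the Taylor coefficients $a_n=\frac{1}{n!}\,\partial_x^n f(0)$ are computed from the restriction of $f$ to the real axis and are therefore automatically independent of $I$, so the same series represents $f_I$ on every slice with no further appeal needed. Your route is equally valid (and not circular, since the identity principle for slice domains is established from the complex one slice by slice, without power series), but the real-axis observation is slightly more economical; everything else, including the convergence estimate on $\limsup|a_n|^{1/n}$ via the bounds on $|\alpha_n|^{1/n}$ and $|\beta_n|^{1/n}$, checks out.
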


Between two regular functions on $B(0,R)$, say
\[f(q):=\sum_{n \in \nn} q^n a_n\,,\quad g(q):=\sum_{n \in \nn} q^n b_n\,,\]
the \emph{regular product} is defined as follows:
\[(f*g)(q):= \sum_{n \in \nn} q^n \sum_{k=0}^na_kb_{n-k}\,.\]
The \emph{regular conjugate} of $f$ is defined as
\[f^c (q) :=\sum_{n \in \nn} q^n \bar a_n.\]

The next definition presents a larger class of domains that are of interest in the theory of regular functions.

\begin{definition}
A domain $\Omega \subseteq \hh$ is called a \emph{slice domain} if $\Omega_I= \Omega \cap L_I$ is an open connected subset of $L_I \cong \cc$ for all $I \in \s,$ and $\Omega$ intersects the real axis. A slice domain $\Omega$ is termed \emph{symmetric} if it is symmetric with respect to the real axis, i.e., if for all $x,y\in\rr, I\in\s$ the inclusion $x+Iy \in \Omega_I$ implies $x+y\s\subset\Omega$.
\end{definition}

The definition of the multiplication $(f,g)\mapsto f*g$ can be extended to the case of regular functions on a symmetric slice domain $\Omega$, leading to the next result. For more details, see \S1.4 in~\cite{librospringer}.

\begin{proposition}
Let $\Omega$ be a symmetric slice domain. The set of regular functions on $\Omega$ is a (noncommutative) ring with respect to $+$ and $*$.
\end{proposition}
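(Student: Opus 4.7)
The plan is to reduce the ring axioms on a general symmetric slice domain $\Omega$ to the Euclidean-ball case via a local-to-global argument based on two standard tools of the theory: the Representation Formula, which reconstructs a regular function on $\Omega$ from its restriction to a single slice $\Omega_I$, and the Identity Principle, which forces two regular functions on $\Omega$ that agree on a subset with accumulation points on $\Omega\cap\rr$ to coincide everywhere.

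First I would define $f*g$ on $\Omega$ slice by slice. Fixing $I\in\s$ and choosing $J\in\s$ with $JI=-IJ$, the splitting lemma lets one write $f_I = F_1 + F_2 J$ and $g_I = G_1 + G_2 J$ with $F_i,G_i\colon\Omega_I\to L_I$ holomorphic, and the formula
\[(f*g)_I := (F_1 G_1 - F_2 \overline{G_2}) + (F_1 G_2 + F_2 \overline{G_1}) J\]
(the overline being complex conjugation in $L_I$) is holomorphic on $\Omega_I$ and extends uniquely to a regular function on $\Omega$ via the Representation Formula. To see that this candidate really deserves to be called $f*g$, I would verify that on every Euclidean ball $B(x_0,r)\subseteq\Omega$ centered at a real point $x_0$ it agrees with the Cauchy product of the power-series expansions of $f$ and $g$ at $x_0$. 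Since $\Omega$ is a symmetric slice domain, such balls cover a full neighborhood of $\Omega\cap\rr$, and the Identity Principle then implies that the global definition is regular on all of $\Omega$ and independent of the choices of $I$ and $J$.

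With $*$ in place, the ring axioms follow almost for free. Associativity $(f*g)*h=f*(g*h)$ and both distributive laws hold on every real-centered ball $B(x_0,r)\subseteq\Omega$, where they reduce to elementary identities for convergent quaternionic power series in $q-x_0$. Each axiom asserts that two specific regular functions on $\Omega$ coincide on a neighborhood of $\Omega\cap\rr$, so the Identity Principle propagates the coincidence to all of $\Omega$; the constant function $1$ is a two-sided unit by the same argument, while noncommutativity of $*$ is inherited from the ball case.

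The delicate step I expect is the coherence of the slice-by-slice construction: namely, checking that $(f*g)_I$ satisfies the real-axis compatibility built into the Representation Formula so that it genuinely extends to a regular function on $\Omega$, and that the extension is the same regardless of which $I$ and which anticommuting $J$ are chosen. Once this coherence is secured, the remainder of the argument is a routine transfer of power-series identities via the Identity Principle.
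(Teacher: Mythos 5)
Your overall strategy --- define $f*g$ slice by slice through the splitting lemma, extend by the representation formula, and transfer the ring axioms from real-centered balls via the identity principle --- is exactly the route taken in the reference the paper cites for this proposition (\S 1.4 of the Springer monograph); the paper itself states the result without proof. However, the central formula in your construction is wrong as written. If the overlines denote pointwise complex conjugation in $L_I$, then $\overline{G_1}$ and $\overline{G_2}$ are \emph{anti}-holomorphic whenever $G_1,G_2$ are holomorphic and nonconstant, so the right-hand side of your definition of $(f*g)_I$ is not holomorphic and cannot extend to a regular function. A concrete failure: take $f\equiv J$ and $g(q)=q$, so that $F_1=0$, $F_2=1$, $G_1(z)=z$, $G_2=0$; your formula yields $(f*g)_I(z)=\bar z\,J$, which is anti-holomorphic, whereas the regular product (the Cauchy product of the power series) is $(f*g)(q)=qJ$, i.e.\ $zJ$ on the slice.

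The correct definition is
\[(f*g)_I(z) = \bigl(F_1(z)G_1(z) - F_2(z)\overline{G_2(\bar z)}\bigr) + \bigl(F_1(z)G_2(z) + F_2(z)\overline{G_1(\bar z)}\bigr)J,\]
where each conjugated function is precomposed with $z\mapsto\bar z$, so that $z\mapsto\overline{G(\bar z)}$ is again holomorphic. This is not a cosmetic point: it is precisely where the hypothesis that $\Omega$ is \emph{symmetric} enters, since one needs $\bar z\in\Omega_I$ whenever $z\in\Omega_I$ for the formula to make sense. With this correction the rest of your plan does go through: the extension lemma applies to any holomorphic map $\Omega_I\to\hh$ (no further real-axis compatibility is required), agreement with the Cauchy product on real-centered balls is a direct computation, and the identity principle then yields independence of the choices of $I$ and $J$ as well as associativity, distributivity, the two-sided unit $1$, and noncommutativity.
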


The operation $f \mapsto f^c$ can also be consistently extended to the case of regular functions on a symmetric slice domain $\Omega$. The additional operation of \emph{symmetrization}, defined by the formula
\[f^s:= f*f^c=f^c*f\]
allows to define the \emph{regular reciprocal} of $f$ as
\[f^{-*}(q):=f^s(q)^{-1} f^c(q)\]
and to prove the next results, where we use the notation $\z_h := \{ q \in \Omega \,\, | \,\,  h(q)=0 \}$ for the zero set of a regular $h:\Omega \to \hh$. For details, we refer the reader to Chapter~5 in~\cite{librospringer}.

\begin{theorem}
Let $f$ be a regular function on a symmetric slice domain $\Omega$ and suppose that $f \not \equiv 0$. Then $f^{-*}$ is regular in $\Omega \setminus \z_{f^s}$, which is a symmetric slice domain, and
\[f*f^{-*} = f^{-*}*f \equiv 1.\]
\end{theorem}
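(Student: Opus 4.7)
The plan is to exploit the key observation that $f^s$ is a \emph{slice-preserving} regular function, i.e., its restriction to each slice $\Omega_I$ takes values in $L_I$, or equivalently its local power series around any real point has only real coefficients. First I would verify this directly: using the local representation $f(q)=\sum q^n a_n$, $f^c(q)=\sum q^n \bar a_n$, one computes that the $n$-th coefficient of $f^s=f*f^c$ is $c_n=\sum_{k=0}^n a_k\bar a_{n-k}$, and the reindexing $k\mapsto n-k$ combined with conjugation shows $\bar c_n=c_n$. The same conclusion transfers to a symmetric slice domain $\Omega$ via the identity principle. From this I would deduce two crucial corollaries: (i) $f^s$ is \emph{central} in the ring of regular functions on $\Omega$, since its real coefficients commute with every quaternion, so $f^s*g=g*f^s$ for every regular $g$; and (ii) for any slice-preserving regular $h$, the $*$-product $h*g$ agrees with the pointwise product $h\cdot g$, because in the slice $L_I$ containing $q$ all values $h(q)$ are in $L_I$ and $L_I$ is commutative.

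Next I would analyze $\Omega\setminus\z_{f^s}$. Since $f^s\not\equiv 0$ (as $f\not\equiv 0$), the slice-preserving structure forces $\z_{f^s}$ to consist of isolated real points together with isolated spheres of the form $x+y\s$, all symmetric with respect to the real axis. Removing this symmetric, discrete-in-each-slice set from $\Omega$ preserves the three defining features of a symmetric slice domain: it is open (trivially); it still meets the real axis because the real zeros are isolated on $\rr$; it is symmetric because $\z_{f^s}$ is; and each slice $(\Omega\setminus\z_{f^s})_I$ is connected as it is a connected planar open set with finitely many or countably many isolated points removed, and the different slices are glued through the real axis.

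Finally, I would define $f^{-*}$ and verify both regularity and the identity. Pointwise, $f^{-*}(q)=f^s(q)^{-1}f^c(q)$ is well-defined on $\Omega\setminus\z_{f^s}$, and the map $q\mapsto f^s(q)^{-1}$ is slice-preserving and holomorphic on each slice (reciprocal of a non-vanishing holomorphic function), hence regular; so $f^{-*}$, being the pointwise product of $f^s(q)^{-1}$ with the regular function $f^c$, equals $(f^s)^{-1}*f^c$ by corollary (ii) and is therefore regular. The identity then collapses by centrality:
\[
f*f^{-*}=f*(f^s)^{-1}*f^c=(f^s)^{-1}*(f*f^c)=(f^s)^{-1}*f^s\equiv 1,
\]
and similarly $f^{-*}*f\equiv 1$ after using $f^c*f=f^s$. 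The only point that requires slight care is the last equality $(f^s)^{-1}*f^s\equiv 1$: since $(f^s)^{-1}$ is slice-preserving, the $*$-product equals the pointwise product, which is identically $1$ by construction. I expect the main (minor) obstacle to be the topological verification that $\Omega\setminus\z_{f^s}$ remains a symmetric slice domain, since this requires knowing the precise structure of the zero set of a slice-preserving regular function on a general symmetric slice domain, rather than on a ball where the power series representation is global.
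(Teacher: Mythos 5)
Your argument is correct, but note that the paper does not prove this theorem at all: it is quoted as preliminary material with a pointer to Chapter~5 of the monograph \emph{Regular functions of a quaternionic variable}, and your proof (real coefficients of $f^s$, hence centrality and the identification of $*$-product with pointwise product for slice-preserving factors, plus the circular/isolated structure of $\z_{f^s}$) is essentially the standard argument given there. The only point you should make explicit is why $f\not\equiv 0$ forces $f^s\not\equiv 0$, e.g.\ via $(f*f^c)(q)=f(q)\,f^c(f(q)^{-1}qf(q))$ together with the identity principle applied to $f$ and $f^c$ on a slice.
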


\begin{theorem}\label{formulae}
Let $f,g$ be regular functions on a symmetric slice domain $\Omega$. Then
\[(f*g) (q) = \left\{
		\begin{array}{ll}
                        0  &  \textrm{if} \,\,\,\, f(q)=0, \\
                      f(q)\,g(f(q)^{-1} q f(q)) & \textrm{otherwise}.
                  \end{array} \right.
\]
If we set $T_f (q) := f^c (q)^{-1} q f^c(q)$ for all $q \in \Omega \setminus \mathcal{Z}_{f^s},$ then
\begin{equation}\label{quozreg}
(f^{-*}*g)(q) = f(T_f (q))^{-1} g (T_f (q)),
\end{equation}
for all $q \in \Omega \setminus \mathcal{Z}_{f^s}.$ For all $x,y \in \mathbb{R}$ with $x+y \s \subset \Omega \setminus \mathcal{Z}_{f^s} \subset \Omega \setminus \mathcal{Z}_{f^c},$ the function $T_f$ maps $x+y \s$ to itself (in particular,
 $T_f (x)=x$ for all $x \in \mathbb{R}$). Furthermore, $T_f$ is a diffeomorphism from $\Omega \setminus \mathcal{Z}_{f^s}$ onto itself, with inverse $T_{f^c}.$
\end{theorem}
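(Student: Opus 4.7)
The plan is to prove the four assertions in sequence, starting on a ball $B(0,R)$ via power series and extending to a general symmetric slice domain $\Omega$ by the identity principle for regular functions. For the multiplication formula, I would write $f(q)=\sum_n q^n a_n$ and $g(q)=\sum_n q^n b_n$ and rearrange the Cauchy product as
\[
(f*g)(q) \;=\; \sum_n q^n \sum_{k=0}^n a_k b_{n-k} \;=\; \sum_{m,k} q^{k+m} a_k b_m \;=\; \sum_m q^m f(q)\, b_m.
\]
If $f(q)=0$ the right-hand side is $0$, giving the first branch. If $f(q)\neq 0$, setting $c=f(q)$ and using the identity $q^m c = c(c^{-1}qc)^m$ (valid because $c^{-1}qc$ is a single quaternionic conjugate of $q$) turns the sum into $f(q)\, g(f(q)^{-1}q f(q))$, which is the second branch. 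Extension to $\Omega$ is then immediate since both sides are regular on $\Omega\setminus\z_f$ and agree on the real slice.

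To obtain formula~\eqref{quozreg}, I would exploit the fact that $f^s=f*f^c$ has real power-series coefficients (the $n$-th coefficient $\sum_{k=0}^n a_k\bar a_{n-k}$ is self-conjugate). Consequently $f^s(q)\in L_I$ whenever $q\in L_I$, so $f^s(q)$ commutes with $q$. Since $f^{-*}=(f^s)^{-1}f^c$, a direct calculation gives
\[
f^{-*}(q)^{-1}\,q\,f^{-*}(q) \;=\; f^c(q)^{-1} f^s(q)\, q\, f^s(q)^{-1} f^c(q) \;=\; f^c(q)^{-1} q\, f^c(q) \;=\; T_f(q).
\]
Applying the multiplication formula to $f^{-*}*g$ then yields $(f^{-*}*g)(q)=f^{-*}(q)\,g(T_f(q))$. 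To rewrite the prefactor, I apply the same multiplication formula to $(f^c*f)(q)=f^s(q)$ to get the identity $f^s(q)=f^c(q)\,f(T_f(q))$; since $f^s(q)$ commutes with $q$, this inverts to $f^{-*}(q)=f(T_f(q))^{-1}$, proving~\eqref{quozreg}.

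For the geometric part, if $q=x+Iy$ and $c=f^c(q)$, then $T_f(q)=c^{-1}(x+Iy)c=x+(c^{-1}Ic)y\in x+y\s$ because conjugation preserves $\s$; in particular $T_f(x)=x$ for $x\in\rr$. For the diffeomorphism claim, using $(f^c)^c=f$ and the identity $f(T_f(q))=f^c(q)^{-1}f^s(q)$ just derived,
\[
T_{f^c}(T_f(q)) = f(T_f(q))^{-1}\, T_f(q)\, f(T_f(q)) = f^s(q)^{-1} f^c(q)\cdot c^{-1} q c\cdot f^c(q)^{-1} f^s(q) = f^s(q)^{-1} q f^s(q) = q,
\]
the last equality again from commutativity of $f^s(q)$ with $q$. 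Swapping the roles of $f$ and $f^c$ (noting $(f^c)^s=f^s$) gives $T_f\circ T_{f^c}=\mathrm{Id}$, and smoothness of both maps promotes this to a diffeomorphism.

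The main obstacle is not any individual step but the careful bookkeeping of the admissible domain: one must verify that $f^c(q)$, $f^s(q)$, and $f(T_f(q))$ are simultaneously invertible wherever the formulas are claimed to hold. The inclusion $\z_{f^s}\supseteq\z_{f^c}$ asserted in the statement is precisely what guarantees this, and the factorization $f^s(q)=f^c(q)f(T_f(q))$ forces $T_f$ to send $\Omega\setminus\z_{f^s}$ into $\Omega\setminus\z_f$, so that the right-hand side of~\eqref{quozreg} is everywhere meaningful.
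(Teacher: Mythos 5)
The paper itself gives no proof of Theorem~\ref{formulae}: it is recalled as known material, with the reader referred to Chapter~5 of \cite{librospringer}. Your argument is essentially the standard one found there --- the power-series rearrangement $\sum_m q^m f(q) b_m$ for the $*$-product formula, the real-coefficient property of $f^s$, and the factorization $f^s(q)=f^c(q)\,f(T_f(q))$ to identify $f^{-*}(q)$ with $f(T_f(q))^{-1}$ and to verify $T_{f^c}\circ T_f=\mathrm{Id}$ --- and all the algebraic steps check out, including the domain bookkeeping at the end. The one step that needs repair is the extension from a ball to a general symmetric slice domain: the right-hand side $f(q)\,g(f(q)^{-1}qf(q))$ is \emph{not} a regular function of $q$, so the identity principle for regular functions does not apply to it, and agreement on the real axis alone is insufficient for a function that is merely real-analytic on each slice (unlike the holomorphic case, a real-analytic function of two real variables can vanish on a line without vanishing identically). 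The standard fix is to observe that both sides restricted to $\Omega_I\setminus\mathcal{Z}_f$ are real-analytic and coincide on a two-dimensional open set, namely a ball centred at a real point of $\Omega$ where the power-series computation is valid, and then conclude by real-analytic continuation on the connected set $\Omega_I\setminus\mathcal{Z}_f$; alternatively, one verifies the formula directly from the slice-wise definition of $*$ via the Splitting Lemma.
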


Let us now recall some material on the zeros of regular quaternionic functions, see Chapter~3 in~\cite{librospringer}. We begin with a result that is folklore in the theory of quaternionic polynomials. For all $q_0 = x_0+ I y_0$ with $x_0,y_0 \in \rr, I \in \s$, we will use the notation
\[S_{q_0}:=x_0 + y_0\s.\]

\begin{theorem}
Let $q_0,q_1\in \hh$ and $f(q)=(q-q_0)*(q-q_1)$. If $q_1 \not \in S_{q_0}$ then $f$ has two zeros in $\hh$, namely $q_0$ and
$(q_0-\bar q_1)q_1(q_0-\bar q_1)^{-1} \in S_{q_1}$. Now suppose, instead, that $q_1 \in S_{q_0}$. If $q_1\neq \bar q_0$ then $f$ only vanishes at $q_0$, while if $q_1=\bar q_0$ then the zero set of $f$ is $S_{q_0}$.
\end{theorem}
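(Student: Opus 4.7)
The plan is to apply the product formula of Theorem~\ref{formulae} to $h_1(q):=q-q_0$ and $h_2(q):=q-q_1$. Since $h_1$ vanishes only at $q_0$, the formula yields, for every $q\neq q_0$, that $f(q)=(q-q_0)\,h_2\big((q-q_0)^{-1}q(q-q_0)\big)$. Hence $f(q)=0$ holds either at $q=q_0$ or when $q\neq q_0$ and $(q-q_0)^{-1}q(q-q_0)=q_1$. Because conjugation in $\hh$ preserves both the real part and the modulus, every such $q$ must lie on $S_{q_1}$.

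Expanding $f(q)=q^2-q(q_0+q_1)+q_0q_1$ and using the characteristic identity $q^2=2\mathrm{Re}(q_1)\,q-|q_1|^2$ valid on $S_{q_1}$, the equation $f(q)=0$ reduces, after the real scalar $2\mathrm{Re}(q_1)=q_1+\bar q_1$ is absorbed, to $q(\bar q_1-q_0)=(\bar q_1-q_0)q_1$. Setting $a:=q_0-\bar q_1$, this is simply
\[
qa=aq_1.
\]

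Three cases now arise according to the position of $q_1$ relative to $S_{q_0}$. (I) If $q_1\notin S_{q_0}$, then $\bar q_1\neq q_0$, so $a\neq 0$ and the unique non-$q_0$ solution is $q=aq_1a^{-1}$; it lies on $S_{q_1}$ by construction and is distinct from $q_0$ because $S_{q_0}$ and $S_{q_1}$ are disjoint. (II) If $q_1\in S_{q_0}$ and $q_1\neq\bar q_0$, again $a\neq 0$ and the formula $q=aq_1a^{-1}$ is forced; using $\mathrm{Re}(q_1)=\mathrm{Re}(q_0)$ and $|q_1|=|q_0|$, a direct computation gives $aq_1=q_0a$, so $q=q_0$ and no additional zero appears. (III) If $q_1=\bar q_0$, then $a=0$ and the equation $qa=aq_1$ is trivially satisfied, while conversely the characteristic identity shows that $f(q)=q^2-2\mathrm{Re}(q_0)q+|q_0|^2$ vanishes precisely on $S_{q_0}$; a short case split on whether $q$ is real or not handles the fact that there are no zeros outside $S_{q_0}$.

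The main subtlety lies in Case (II): naively the formula $q=aq_1a^{-1}$ looks like it should produce a second zero, and the collapse $aq_1a^{-1}=q_0$ is the crucial identity that has to be verified, exploiting that $q_0$ and $q_1$ share both real part and modulus. Everything else is a direct reading of the product formula of Theorem~\ref{formulae} combined with the quadratic characteristic identity satisfied by every quaternion.
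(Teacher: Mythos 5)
Your argument is correct. Note, however, that the paper itself gives no proof of this statement: it is quoted as a folklore result on quaternionic polynomials, with the reader referred to Chapter~3 of the cited monograph, so there is no in-paper proof to compare against. Your derivation is a clean, self-contained route: the product formula of Theorem~\ref{formulae} applied to $(q-q_0)*(q-q_1)$ correctly localizes every zero other than $q_0$ to the sphere $S_{q_1}$, and on that sphere the characteristic identity $q^2=(q_1+\bar q_1)q-|q_1|^2$ reduces $f(q)=q^2-q(q_0+q_1)+q_0q_1$ to the linear equation $q(\bar q_1-q_0)=(\bar q_1-q_0)q_1$, exactly as you state. The three cases then come out right: when $q_1\notin S_{q_0}$ the unique solution $aq_1a^{-1}$ with $a=q_0-\bar q_1$ is the second zero promised by the statement; when $q_1\in S_{q_0}$ with $q_1\neq\bar q_0$ the identity $aq_1=q_0a$ (which I verified uses precisely $\mathrm{Re}(q_0)=\mathrm{Re}(q_1)$ and $|q_0|=|q_1|$) collapses the would-be second zero onto $q_0$; and when $q_1=\bar q_0$ the polynomial becomes the characteristic polynomial of $S_{q_0}$, whose zero set is that sphere. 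You correctly identify the collapse in the intermediate case as the only delicate point. The one cosmetic remark is that in the degenerate situation $q_0\in\rr$ with $q_1=\bar q_0=q_0$ the sphere $S_{q_0}$ reduces to the point $q_0$, so the last two cases of the statement coincide; your case split on whether $q$ is real already covers this.
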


As in the case of a holomorphic complex function, the zeros of a regular quaternionic function can be factored out. The relation between the factorization and the zero set is, however, subtler than in the complex case because of the previous theorem.

\begin{theorem}\label{factorization}
Let $f\not\equiv0$ be a regular function on a symmetric slice domain $\Omega$ and let $x+y\s\subset\Omega$. There exist $m\in \nn = \{0, 1, 2, \ldots\}$ and a regular function $g: \Omega \to \hh$, not identically zero in $x+y\s$, such that
\[f (q) = [(q-x)^2+y^2]^m g(q)\]
If $g$ has a zero $p_1\in x+y\s$, then such a zero is unique and there exist $n \in \nn, p_1, \ldots, p_n \in x+y\s$ (with $p_l \neq \bar p_{l+1}$ for all $l \in \{1,\ldots,n-1\}$) such that
\[g(q) = (q-p_1)*\ldots*(q-p_n)*h(q)\]
for some regular function $h: \Omega \to \hh$ that does not have zeros in $x+y\s$.
\end{theorem}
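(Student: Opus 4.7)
The plan is to separate the vanishing of $f$ on the sphere $x+y\s$ into two qualitatively different contributions: a \emph{spherical} part carried by powers of the real polynomial $\Delta_{x,y}(q) := (q-x)^2+y^2$ (whose zero set is exactly $x+y\s$), and an \emph{isolated} part carried by a $*$-product of linear factors $(q-p_l)$ with $p_l \in x+y\s$. The main tools in both parts will be the evaluation formula of Theorem \ref{formulae} together with the representation formula for slice regular functions.

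For the first part, I would first establish the auxiliary claim that if a regular $F\not\equiv 0$ on $\Omega$ vanishes identically on $x+y\s$, then $F=\Delta_{x,y}*\tilde F$ for some regular $\tilde F$ on $\Omega$. To see this, fix $I \in \s$ and $J \in \s$ orthogonal to $I$, and decompose the slice restriction as $F_I=F_1+F_2 J$ with $F_1,F_2:\Omega_I\to L_I$ holomorphic. The hypothesis forces each $F_j$ to vanish at both $x+Iy$ and $x-Iy$, so each is divisible by the complex polynomial $(z-x-Iy)(z-x+Iy)=\Delta_{x,y}(z)$ in the plane $L_I\cong\cc$; reassembling via the representation formula yields the desired regular $\tilde F$ on all of $\Omega$. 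I then define $m$ as the largest integer such that $\Delta_{x,y}^m$ divides $f$ on the right in the $*$-product; $m$ is finite because $f_I\not\equiv 0$ has finite order of vanishing at $x+Iy$. Writing $f=\Delta_{x,y}^m*g$ provides a regular $g$ not identically zero on $x+y\s$.

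For the second part, if $g$ has no zero on $x+y\s$, set $n=0$ and $h=g$. Otherwise pick a zero $p_1$ of $g$ in $x+y\s$ and apply the standard point-zero extraction to write $g=(q-p_1)*g_1$ for some regular $g_1$. To prove that $p_1$ is the unique zero of $g$ on $x+y\s$, suppose a second zero $p'_1\neq p_1$ existed on $x+y\s$; by Theorem \ref{formulae}, since $(q-p_1)(p'_1)=p'_1-p_1\neq 0$, one obtains $g_1(\tilde p'_1)=0$ with $\tilde p'_1:=(p'_1-p_1)^{-1}p'_1(p'_1-p_1)\in x+y\s$. Iterating this step on $g_1$ either eventually produces two consecutive factors $(q-p)*(q-\bar p)=\Delta_{x,y}$, contradicting the maximality of $m$, or runs indefinitely, contradicting the finite order of vanishing of $g_I$ at $x+Iy$. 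The same argument applied in turn to $g_1,g_2,\ldots$ produces the chain $g=(q-p_1)*\cdots*(q-p_n)*h$; the constraint $p_{l+1}\neq \bar p_l$ is again enforced by maximality of $m$ (otherwise a hidden $\Delta_{x,y}$ would appear), and termination at a finite $n$ is guaranteed once more by the finite vanishing order on $L_I$.

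The main obstacle I foresee lies in tracking how the noncommutative $*$-product twists points within $x+y\s$, a twisting controlled by the map $T_f$ of Theorem \ref{formulae}. Both the uniqueness-of-$p_1$ argument and the termination of the peeling process require careful bookkeeping to verify that each successive peeling genuinely relocates the remaining zeros inside $x+y\s$, and that the chain of relocations cannot continue indefinitely without either producing $\Delta_{x,y}$ as a hidden factor or violating the finite-order condition on some slice.
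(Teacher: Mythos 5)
First, a point of reference: the paper does not prove Theorem~\ref{factorization} at all; it is recalled without proof from Chapter~3 of~\cite{librospringer}, so there is no in-paper argument to compare yours against. Judged on its own merits, the first half of your plan is sound: the splitting $F_I=F_1+F_2J$, the divisibility of $F_1,F_2$ by $(z-x)^2+y^2$ on the slice, the reassembly via the representation formula, and the finiteness of $m$ via the vanishing order of $f_I$ at $x+Iy$ all work (it helps that $[(q-x)^2+y^2]$ has real coefficients, so $*$-multiplication by it is central and pointwise, and maximality of $m$ does give a $g$ not identically zero on $x+y\s$).

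The second half has two genuine gaps. (i) Your uniqueness argument for $p_1$ is incomplete as sketched: after writing $g=(q-p_1)*(q-\tilde p'_1)*g_2$ with $\tilde p'_1\neq\bar p_1$, to keep iterating you must know that the partial product $(q-p_1)*\cdots*(q-p_k)$ does not vanish at $p'_1$, so that $g(p'_1)=0$ still forces a zero of $g_k$ on the sphere; but that is exactly the zero-structure statement being established, so the dichotomy ``either a factor $\Delta_{x,y}$ appears or the process runs forever'' is not justified without it. The short, standard route is the representation formula: on $x+y\s$ one has $g(x+yJ)=b+Jc$ with $b,c$ determined by $g(x\pm yI)$, so two distinct zeros force $(J_1-J_2)c=0$, hence $c=0$ and then $b=0$, i.e.\ $g\equiv0$ on $x+y\s$, contradicting the construction of $g$. (ii) Your termination criterion, ``the finite order of vanishing of $g_I$ at $x+Iy$,'' is the wrong quantity: the restriction of $(q-p_l)$ to $L_I$ vanishes at $p_l$, which in general does not lie in $L_I$, so peeling off many linear factors with $p_l\in x+y\s$ need not raise the vanishing order of $g_I$ at $x+Iy$ at all. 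The correct bound comes from the symmetrization: since $[(q-p_1)*\cdots*(q-p_k)]^s=[(q-x)^2+y^2]^k$ divides $g^s$, and $g^s\not\equiv0$ restricts to a holomorphic function on $L_I$ with finite vanishing order at $x+Iy$, the length $k$ of the chain is bounded. With (i) replaced by the representation-formula argument and (ii) by the symmetrization bound, your outline closes up into a correct proof along the lines of the one in~\cite{librospringer}.
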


This motivates the next definition.

\begin{definition}
In the situation of Theorem~\ref{factorization}, $f$ is said to have \emph{spherical multiplicity} $2m$ at $x+y\s$ and \emph{isolated multiplicity} $n$ at $p_1$. Finally, the \emph{total multiplicity} of $x+y\s$ for $f$ is defined as the sum $2m+n$.
\end{definition}

Let us now recall a definition originally given in~\cite{perotti} and a few results from~\cite{ocs}, which concern the real differential of a regular function.

\begin{definition}
Let $f$ be a regular function on a symmetric slice domain $\Omega$, and let $q_0=x_0+Iy_0 \in \Omega$ with $x_0,y_0 \in \rr, I \in \s$. If $y_0 \neq 0$ then the \emph{spherical derivative} of $f$ at $q_0$ is defined by the formula
\[\partial_s f(q_0):= (q_0-\bar q_0)^{-1}(f(q_0)-f(\bar q_0))\,.\]
\end{definition}

\begin{proposition}\label{realdifferential}
Let $f$ be a regular function on a symmetric slice domain $\Omega$, and let $q_0=x_0+Iy_0 \in \Omega$ with $x_0,y_0 \in \rr, I \in \s$. If $y_0=0$ then the real differential of $f$ at $q_0$ acts by right multiplication by the Cullen derivative $\partial_c f(q_0)$ on the entire tangent space $T_{q_0}\Omega \simeq \hh$. If, on the other hand, $y_0 \neq 0$ and if we split such space as $\hh = L_I \oplus L_I^\perp$ then the real differential acts on $L_I$ by right multiplication by $\partial_c f(q_0)$ and on $L_I^\perp$ by right multiplication by the spherical derivative $\partial_s f(q_0)$.
\end{proposition}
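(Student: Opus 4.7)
The plan is to compute $df_{q_0}$ directly, by picking explicit smooth paths through $q_0$ whose velocities span the relevant tangent subspaces, and then to rewrite each resulting formula as right multiplication by $\partial_c f(q_0)$ or $\partial_s f(q_0)$. The two tools I intend to invoke are the left Cauchy--Riemann-type identity $\partial_y f_J = J\,\partial_x f_J$ (which is precisely the condition $\overline{\partial}_J f\equiv 0$ of Definition~\ref{def:sliceregular}), and the representation formula for slice regular functions on symmetric slice domains, which expresses $f(x+Jy)$ as an $\hh$-linear combination of $f(x+Iy)$ and $f(x-Iy)$.

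I would first dispatch the case $y_0=0$. Any tangent direction $v\in\hh$ at the real point $x_0$ lies in some slice $L_J$ (take $J=\mathrm{Im}(v)/|\mathrm{Im}(v)|$ if $v\notin\rr$, else any $J$); writing $v=\alpha+J\beta$ with $\alpha,\beta\in\rr$ and differentiating $f_J$ along the straight segment $t\mapsto x_0+tv$, the CR identity gives
\[
df_{x_0}(v)=\alpha\,\partial_x f_J(x_0)+\beta J\,\partial_x f_J(x_0)=v\,\partial_x f_J(x_0).
\]
Substituting $\partial_y f_J=J\,\partial_x f_J$ into the definition of $\partial_c$ yields $\partial_c f(x_0)=\partial_x f_J(x_0)$, which is therefore independent of $J$. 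The same slice-wise argument, applied to $q_0=x_0+Iy_0$ and directions $v\in L_I$, also settles the $L_I$-part in the case $y_0\neq 0$.

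To handle the $L_I^\perp$-part when $y_0\neq 0$, I would first identify $L_I^\perp$ with $T_I\s$ as real $2$-planes inside $\hh$ (both coincide with the orthogonal complement of $\{1,I\}$), so that any $w\in L_I^\perp$ can be written as $w=Vy_0$ with $V\in T_I\s$. Picking a smooth curve $J\colon(-\varepsilon,\varepsilon)\to\s$ with $J(0)=I$ and $J'(0)=V$, the path $\gamma(t):=x_0+J(t)y_0$ lies on the sphere $x_0+y_0\s$ and satisfies $\gamma(0)=q_0$, $\gamma'(0)=w$. The representation formula then gives
\[
f(\gamma(t))=\tfrac{1-J(t)I}{2}\,f(q_0)+\tfrac{1+J(t)I}{2}\,f(\bar q_0),
\]
whose $t$-derivative at $0$ is $-\tfrac{VI}{2}(f(q_0)-f(\bar q_0))$. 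Using $(q_0-\bar q_0)^{-1}=-I/(2y_0)$ and the commutativity of the real scalar $y_0$ with $I$, this expression coincides with $w\,\partial_s f(q_0)$, which is exactly the claimed action.

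The main delicate step is the $L_I^\perp$ computation: one must identify $L_I^\perp$ with the tangent space $T_I\s$ to the sphere $x_0+y_0\s$, feed a sphere-parametrized curve into the representation formula, and be vigilant with noncommutativity when commuting the real scalar $y_0$ through quaternion products in order to recover $\partial_s f(q_0)$. Everything else reduces to unwinding the definitions of $\partial_c f$ and $\partial_s f$ together with the left Cauchy--Riemann identity satisfied by each $f_J$.
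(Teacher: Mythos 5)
This proposition is one the paper recalls from the reference \cite{ocs} without reproducing a proof, so there is no in-paper argument to compare against; judged on its own, your proof is correct and is essentially the standard one. The computations all check out: the Cauchy--Riemann identity $\partial_y f_J = J\,\partial_x f_J$ gives $\partial_c f = \partial_x f_J$ and hence $df_{q_0}(v)=v\,\partial_c f(q_0)$ for $v$ in the slice, and for the orthogonal directions the curve $\gamma(t)=x_0+J(t)y_0$ fed into the representation formula yields $-\tfrac{VI}{2}\bigl(f(q_0)-f(\bar q_0)\bigr)=Vy_0\,(q_0-\bar q_0)^{-1}\bigl(f(q_0)-f(\bar q_0)\bigr)=w\,\partial_s f(q_0)$, exactly as claimed. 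The only thing left implicit is that $f$ is real differentiable at $q_0$ (so that directional derivatives along your chosen curves assemble into the linear map $df_{q_0}$); since the statement itself presupposes the existence of the real differential, and slice regular functions on symmetric slice domains are real analytic, this is not a genuine gap.
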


The next result characterizes the \emph{singular set} $N_f$ of a regular function $f$, that is, the set of points where the real differential of $f$ is not invertible.

\begin{proposition}\label{noninvertibledifferential}
Let $f$ be a regular function on a symmetric slice domain $\Omega$, and let $q_0 \in \Omega$. The real differential of $f$ at $q_0$ is not invertible if, and only if, there exist $\widetilde q_0 \in S_{q_0}$ and a regular function $g:\Omega \to \hh$ such that
\begin{equation} \label{diffnotinv}
f(q) = f(q_0) + (q-q_0)*(q-\widetilde q_0)*g(q);
\end{equation}
that is, if, and only if, $f-f(q_0)$ has total multiplicity $n\geq 2$ at $S_{q_0}$. 
We can distinguish the following special cases:
\begin{itemize}
\item equality~\eqref{diffnotinv} holds with $\widetilde q_0 = \bar q_0$ if, and only if, the spherical derivative $\partial_sf(q_0)$ vanishes;
\item equality~\eqref{diffnotinv} holds with $\widetilde q_0 = q_0$ if, and only if, the Cullen derivative $\partial_cf(q_0)$ vanishes.
\end{itemize}
\end{proposition}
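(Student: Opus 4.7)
The plan is to first reduce to the case $f(q_0)=0$ by replacing $f$ with $f-f(q_0)$, and then to treat separately the cases $q_0\in\rr$ and $q_0=x_0+Iy_0$ with $y_0\neq 0$. When $q_0$ is real, one has $S_{q_0}=\{q_0\}$ and $\bar q_0=q_0$, and the power series expansion $f(q)=\sum_{n\geq 0}(q-q_0)^n a_n$ around $q_0$, with $a_0=0$ and $a_1=\partial_c f(q_0)$, immediately gives the three claimed equivalences: by Proposition~\ref{realdifferential}, the differential is non-invertible iff $a_1=0$, iff $f(q)=(q-q_0)^2 g(q)=(q-q_0)*(q-q_0)*g(q)$ for some regular $g$.

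For $q_0=x_0+Iy_0$ with $y_0\neq 0$ and $f\not\equiv 0$, I would write $f=(q-q_0)*K$ for the unique regular $K$ on $\Omega$ furnished by Theorem~\ref{factorization}. Applying Theorem~\ref{formulae} at $q_0$ and $\bar q_0$, both of which lie in the commutative slice $L_I$, yields
\[
\partial_c f(q_0)=K(q_0), \qquad \partial_s f(q_0)=K(\bar q_0).
\]
By Proposition~\ref{realdifferential}, the real differential at $q_0$ is non-invertible precisely when $L_I\cdot K(q_0)+L_I^\perp\cdot K(\bar q_0)\neq\hh$. Using that right multiplication by an element of $L_I$ preserves both $L_I$ and $L_I^\perp$, whereas right multiplication by an element of $L_I^\perp$ carries $L_I^\perp$ into $L_I$, one checks that this non-spanning condition is equivalent to $K(q_0)=0$, $K(\bar q_0)=0$, or $K(\bar q_0)\,K(q_0)^{-1}\in L_I^\perp$.

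The main step is then to convert this algebraic alternative into the geometric statement that $K$ vanishes somewhere on $S_{q_0}$. For this I would invoke the representation formula
\[
K(x_0+Jy_0)=\tfrac{1-JI}{2}\,K(q_0)+\tfrac{1+JI}{2}\,K(\bar q_0),
\]
together with the identity $(1+JI)^{-1}(1-JI)\in L_I^\perp$ for $J\in\s\setminus\{\pm I\}$, which follows from $|JI|^2=1$ and $\mathrm{Im}(JI)\in L_I^\perp$; this shows that, as $J$ varies in $\s\setminus\{\pm I\}$, the coefficients $-(1+JI)^{-1}(1-JI)$ exhaust $L_I^\perp\setminus\{0\}$, while the limit values $J=\pm I$ give $K(q_0)=0$ or $K(\bar q_0)=0$ directly. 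Consequently, $K(x_0+Jy_0)=0$ for some $J\in\s$ if and only if one of the three algebraic conditions above is satisfied. A further application of Theorem~\ref{factorization} to $K$ then produces a factorization $K=(q-\widetilde q_0)*g$ for some $\widetilde q_0\in S_{q_0}$ (one may take $\widetilde q_0=\bar q_0$ whenever $K$ vanishes on all of $S_{q_0}$), so that $f=(q-q_0)*(q-\widetilde q_0)*g$ and the total multiplicity of $S_{q_0}$ for $f$ is at least $2$. The converse direction is immediate, as is the identification of the two special cases: $\widetilde q_0=\bar q_0$ is admissible iff $K(\bar q_0)=0$, i.e.\ $\partial_s f(q_0)=0$, and $\widetilde q_0=q_0$ is admissible iff $K(q_0)=0$, i.e.\ $\partial_c f(q_0)=0$. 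I expect the representation-formula computation, which links the subspace-spanning condition to the vanishing of $K$ on $S_{q_0}$, to be the main technical obstacle.
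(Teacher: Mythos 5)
The paper does not prove Proposition~\ref{noninvertibledifferential}: it is stated as recalled preliminary material from the reference \cite{ocs}, so there is no internal proof to compare against. Judged on its own terms, your plan is correct and is essentially the standard argument for this result. The reduction to $f(q_0)=0$, the real case via the power series, the factorization $f=(q-q_0)*K$ with $\partial_c f(q_0)=K(q_0)$ and $\partial_s f(q_0)=K(\bar q_0)$, the translation of non-invertibility into the alternative ``$K(q_0)=0$, or $K(\bar q_0)=0$, or $K(\bar q_0)K(q_0)^{-1}\in L_I^\perp$'', and the identity $(1+JI)^{-1}(1-JI)=-2\,\mathrm{Im}(JI)/|1+JI|^2$, which shows that these coefficients sweep out $L_I^\perp\setminus\{0\}$, all check out; so does the final passage through Theorem~\ref{factorization} and the identification of the two special cases. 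Two small points deserve explicit justification in a written-out version. First, $\partial_c f(q_0)=K(q_0)$ does not follow from Theorem~\ref{formulae} alone: you need either the Leibniz rule $\partial_c(f*g)=(\partial_c f)*g+f*(\partial_c g)$ or the observation that on the slice $L_I$ one has $f_I(z)=(z-q_0)K_I(z)$ pointwise (because $(z-q_0)^{-1}z(z-q_0)=z$ for $z\in L_I$), after which a direct computation of $\tfrac12(\partial_x-I\partial_y)$ gives the claim. Second, in the spanning analysis one should note that $L_I\cdot K(q_0)$ and $L_I^\perp\cdot K(\bar q_0)$ are each two-dimensional exactly when the respective value is nonzero, and that $v^{-1}z$ for $v\in L_I^\perp\setminus\{0\}$, $z\in L_I\setminus\{0\}$ ranges over $L_I^\perp\setminus\{0\}$; both are elementary but are the crux of the ``one checks''. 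With these details filled in, the argument is complete.
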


The following theorem asserts that the total multiplicity $n$ of $f-f(q_0)$ at $S_{q_0}$ is constant in a neighborhood of $S_{q_0}$.

\begin{theorem}\label{singular}
Let $\Omega$ be a symmetric slice domain and let $f:\Omega \to \hh$ be a non-constant regular function. Then its singular set $N_f$ has empty interior. Moreover, for a fixed $q_0 \in N_f$, let $n > 1$ be the total multiplicity of $f-f(q_0)$ at $S_{q_0}$. Then there exist a neighborhood $U$ of $q_0$ and a neighborhood $T$ of $S_{q_0}$ such that, for all $q_1 \in U$, the sum of the total multiplicities of the zeros of $f-f(q_1)$ in $T$ equals $n$; in particular, for all $q_1 \in U \setminus N_f$ the preimage of $f(q_1)$ includes at least two distinct points of $T$.
\end{theorem}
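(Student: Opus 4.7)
The plan addresses the two assertions separately, combining an analytic argument for the emptiness of the interior of $N_f$ with a topological degree computation for the stability of total multiplicity.

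For the first assertion, Proposition~\ref{realdifferential} implies that the real Jacobian determinant of $f$ equals $|\partial_c f(q)|^4$ at real points $q \in \Omega$ and $|\partial_c f(q)|^2 |\partial_s f(q)|^2$ at non-real ones, so $N_f$ coincides with the vanishing set of a real-analytic function. If $N_f$ had nonempty interior, at least one of $\{\partial_c f = 0\}$ and $\{\partial_s f = 0\}$ would have nonempty interior, and the identity principle for regular functions (together with the analogous rigidity for the spherical derivative, reducible via the slice decomposition $f_I = F + G J$ to the classical holomorphic identity principle) would force $\partial_c f \equiv 0$ or $\partial_s f \equiv 0$. Either identity makes $f$ constant, contradicting our hypothesis.

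For the stability assertion, I would realise the total multiplicity as a local Brouwer degree. By Theorem~\ref{factorization} the zero set of the non-constant regular function $f - f(q_0)$ is a locally finite union of isolated points and isolated spheres; I would pick a symmetric open neighborhood $T$ of $S_{q_0}$ with $\overline T \subset \Omega$ on which $S_{q_0}$ is the only zero locus of $f - f(q_0)$. Compactness of $\partial T$ yields $\delta > 0$ with $|f - f(q_0)| > \delta$ on $\partial T$; a neighborhood $U$ of $q_0$ with $|f - f(q_0)| < \delta$ on $U$ then guarantees that $f - f(q_1)$ does not vanish on $\partial T$ for any $q_1 \in U$. The straight-line homotopy in values gives
\[
\deg(f, T, f(q_1)) = \deg(f, T, f(q_0)) =: d
\]
for all $q_1 \in U$. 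Since the Brouwer degree at a zero of a slice regular function coincides with its total multiplicity, $d$ equals the sum of total multiplicities of the zeros of $f - f(q_1)$ in $T$; computing $d$ at $q_1 = q_0$ from the factorization $f - f(q_0) = [(q - x_0)^2 + y_0^2]^m * (q - p_1) * \cdots * (q - p_{n-2m}) * h$ (with $h$ nowhere vanishing on $\overline T$) gives $d = 2m + (n-2m) = n$. For $q_1 \in U \setminus N_f$ each preimage in $T$ is a regular point where, by Part 1, the Jacobian is strictly positive, so $f$ is a local orientation-preserving diffeomorphism there: each preimage contributes $+1$ to the degree $d = n$, forcing at least $n \geq 2$ distinct points in $T \cap f^{-1}(f(q_1))$.

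The main obstacle is the identification of local Brouwer degree with total multiplicity, particularly for the spherical factor $(q - x_0)^2 + y_0^2$ (whose zero locus is the whole non-isolated sphere $S_{q_0}$) and because the regular product $*$ is not pointwise. I would overcome this by computing degrees on the open set $T$ around the entire sphere rather than at isolated points: the slice-preserving map $q \mapsto (q - x_0)^2 + y_0^2$ has generic fibre of cardinality $2$ near $0$ (both quaternionic square roots of a non-real value lie on a single complex line), giving degree $2$, and iteration yields $2m$ for the $m$-th regular power. The remaining linear factors $(q - p_k)$ each contribute $+1$, and additivity of local degree under $*$ can be reduced, via the identity $(f * g)(q) = f(q)\, g\!\left(f(q)^{-1} q f(q)\right)$ of Theorem~\ref{formulae}, to the classical additivity for pointwise holomorphic products by conjugating with the orientation-preserving inner-automorphism map.
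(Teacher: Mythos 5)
This theorem is quoted by the paper from~\cite{ocs} as preliminary material, with no proof given, so there is no internal argument to compare against; the source establishes the stability of the total multiplicity by restricting the symmetrization $(f-f(q_0))^s$ to a slice and applying one-variable complex analysis, rather than four-dimensional degree theory. Your route is therefore genuinely different, but it has two substantive gaps. The first is in your emptiness-of-interior argument: the real Jacobian determinant at a non-real point is \emph{not} $|\partial_cf(q)|^2|\partial_sf(q)|^2$. Proposition~\ref{realdifferential} says the differential sends $v\in L_I$ to $v\,\partial_cf(q_0)$ and $v\in L_I^\perp$ to $v\,\partial_sf(q_0)$, but the image planes $L_I\,\partial_cf(q_0)$ and $L_I^\perp\,\partial_sf(q_0)$ need not be complementary: writing $\partial_cf=c_1+c_2J$ and $\partial_sf=s_1+s_2J$ with $c_i,s_i\in L_I$, the determinant works out to $|c_1\bar s_1+c_2\bar s_2|^2$, which can vanish with both derivatives nonzero. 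This is precisely the case $\widetilde q_0\notin\{q_0,\bar q_0\}$ of Proposition~\ref{noninvertibledifferential}, so $N_f$ strictly contains the union of the zero sets of $\partial_cf$ and $\partial_sf$ in general, and your Baire-category reduction to the two identity principles does not apply. The conclusion can be repaired along your own lines: the Jacobian determinant is still real-analytic, so if $N_f$ had interior it would vanish identically, in particular on $\Omega\cap\rr$ where it equals $|\partial_cf|^4$; then $\partial_cf\equiv0$ by the identity principle and $f$ is constant.

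The second gap is the pivot of your degree argument, namely that the local Brouwer degree of $f-v$ over a sphere equals the total multiplicity. You correctly flag this as the main obstacle, but the proposed resolution does not go through as stated: the identity $(f*g)(q)=f(q)\,g(f(q)^{-1}qf(q))$ is unavailable exactly where $f(q)=0$, i.e.\ at the points you are counting, and the diffeomorphism $T_f$ of Theorem~\ref{formulae} is only defined off $\mathcal{Z}_{f^s}$, which removes the very spheres carrying the zeros of the left factor; moreover ``classical additivity for pointwise holomorphic products'' does not directly apply to $\hh\setminus\{0\}$-valued factors (one must separately prove that left and right multiplication by a nonzero quaternion preserves orientation and that $T_f$ does too). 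These statements are all true and the lemma can be proved, but as it stands it is asserted rather than established, and it is the whole content of the theorem. Finally, a smaller slip: for $q_1\in U\setminus N_f$ you claim every preimage of $f(q_1)$ in $T$ is a regular point, which does not follow (only $q_1$ itself is assumed non-singular); the conclusion survives because each preimage contributes a strictly positive local degree while $q_1$ contributes exactly $+1$ and the total is $n\geq 2$.
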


\begin{corollary}\label{diffeo}
Let $\Omega$ be a symmetric slice domain and let $f: \Omega \to \hh$ be a regular function. If $f$ is injective, then its singular set $N_f$ is empty.
\end{corollary}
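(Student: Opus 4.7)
The plan is to prove the contrapositive: assuming $N_f$ is nonempty, I will produce two distinct points of $\Omega$ with the same image under $f$, contradicting injectivity. The engine of the argument is Theorem~\ref{singular}, which is tailor-made for this purpose since it guarantees both that $N_f$ has empty interior and that points in $N_f$ force the existence of multiple preimages of nearby values.

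More concretely, I would pick any $q_0\in N_f$. Since $f$ is injective it is in particular nonconstant, so Theorem~\ref{singular} applies. Proposition~\ref{noninvertibledifferential} identifies the total multiplicity $n$ of $f-f(q_0)$ at $S_{q_0}$ as an integer $\geq 2$, and Theorem~\ref{singular} then furnishes a neighborhood $U$ of $q_0$ and a neighborhood $T$ of $S_{q_0}$ with the key property that for every $q_1\in U\setminus N_f$ the preimage $f^{-1}(f(q_1))$ contains at least two distinct points of $T$. The second half of Theorem~\ref{singular} (empty interior of $N_f$) guarantees that such a $q_1\in U\setminus N_f$ actually exists. Hence one extracts $q_1\neq q_1'$ in $T$ with $f(q_1)=f(q_1')$, contradicting the injectivity hypothesis. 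Therefore $N_f$ must be empty.

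There is essentially no obstacle here: Theorem~\ref{singular} packages exactly the statement needed, and the only care required is to remember that an injective function on a domain is automatically nonconstant, so that the hypotheses of Theorem~\ref{singular} are met. The corollary should follow in a short paragraph.
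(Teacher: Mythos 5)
Your argument is correct and is exactly the derivation the paper intends: Corollary~\ref{diffeo} is stated as an immediate consequence of Theorem~\ref{singular} (combined with Proposition~\ref{noninvertibledifferential} to see that a point of $N_f$ gives total multiplicity $n\geq 2$), and your use of the empty-interior part of that theorem to produce a point $q_1\in U\setminus N_f$ with two distinct preimages of $f(q_1)$ is the right way to close the contradiction with injectivity. No gaps.
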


For the purposes of our present work, we add the following remark.

\begin{proposition}\label{violationofinjectivity}
Let $\Omega$ be a symmetric slice domain and let $f: \Omega \to \hh$ be a regular function. For each value $v$ of $f$, the following are equivalent:
\begin{itemize}
\item $f$ is not injective in any neighborhood of $f^{-1}(v);$
\item there exist $q_0,q_1 \in \Omega$ and a regular $g: \Omega \to \hh$ such that 
\begin{equation}\label{factorize}
f(q) = v + (q-q_0)*(q-q_1)*g(g).
\end{equation}
\end{itemize}
\end{proposition}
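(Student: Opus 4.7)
The plan is to prove the two implications separately, using the factorization theory of Theorem~\ref{factorization}, the evaluation formula of Theorem~\ref{formulae}, the singular-set characterization of Proposition~\ref{noninvertibledifferential}, and the classical inverse function theorem.

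For the forward direction, assume $f$ fails to be injective on every neighborhood of $f^{-1}(v)$, and split on the cardinality of $f^{-1}(v)$. If $f^{-1}(v)$ contains two distinct points $q_0,q_0'$ lying on the same sphere $S_{q_0}$, then the uniqueness of isolated zeros per sphere in Theorem~\ref{factorization} forces $f-v$ to have spherical multiplicity at least $2$ at $S_{q_0}$, so $(q-q_0)*(q-\bar q_0)=(q-x_0)^2+y_0^2$ divides $f-v$, giving the factorization with $q_1=\bar q_0$. If $q_0,q_0'$ lie on distinct spheres, I factor $f-v=(q-q_0)*h_1$ (again by Theorem~\ref{factorization}) and evaluate at $q_0'$ via Theorem~\ref{formulae} to conclude $h_1(p_1)=0$ for $p_1:=(q_0'-q_0)^{-1}q_0'(q_0'-q_0)\in S_{q_0'}$; a second application of the factorization theorem then yields $f-v=(q-q_0)*(q-p_1)*g$. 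Finally, if $f^{-1}(v)=\{q_0\}$ is a singleton, non-injectivity on every neighborhood of $q_0$ together with the inverse function theorem implies $q_0\in N_f$, so Proposition~\ref{noninvertibledifferential} directly supplies the desired factorization.

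For the reverse direction, start from $f-v=(q-q_0)*(q-q_1)*g$ and split on the relative position of $q_1$ to $q_0$. If $q_1\notin S_{q_0}$, the folklore theorem on the zeros of $(q-q_0)*(q-q_1)$ produces a second root of $f-v$ in $S_{q_1}$, so $f^{-1}(v)$ contains two distinct points and any neighborhood of it sees non-injectivity trivially. If $q_1=\bar q_0$ with $q_0\notin\rr$, the real-valued factor $(q-q_0)*(q-\bar q_0)=(q-x_0)^2+y_0^2$ forces $f-v$ to vanish on the whole sphere $S_{q_0}\subseteq f^{-1}(v)$, again giving infinitely many preimages. The subtle case is $q_1\in S_{q_0}\setminus\{\bar q_0\}$ (which includes $q_0=q_1\in\rr$): here $f-v$ has total multiplicity at least $2$ at $S_{q_0}$, so $q_0\in N_f$ by Proposition~\ref{noninvertibledifferential}. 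If $\partial_s f(q_0)=0$ then $f(\bar q_0)=v$, producing a second preimage (when $q_0\notin\rr$); otherwise $\partial_c f(q_0)=0$, and the holomorphic restriction $f_{L_I}$ to the slice $L_I$ through $q_0$ has vanishing complex derivative at $q_0$, so the classical complex version of the statement makes $f_{L_I}$ not injective on any neighborhood of $q_0$ in $L_I$, and the conclusion transfers to $f$ on any neighborhood of $f^{-1}(v)\ni q_0$.

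The main obstacle is exactly this last case: when $f^{-1}(v)$ reduces to the single point $q_0$, non-injectivity cannot be read off from multiplicity of preimages of $v$ and must instead be extracted from the holomorphic rigidity of the slice $f_{L_I}$, using the dichotomy for $q_0\in N_f$ supplied by Propositions~\ref{realdifferential} and~\ref{noninvertibledifferential} to convert total multiplicity at least $2$ at $S_{q_0}$ into the vanishing of either $\partial_c f(q_0)$ or $\partial_s f(q_0)$.
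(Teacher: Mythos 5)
Your forward implication (non-injectivity implies the factorization) is correct, and it takes a more direct route than the paper, which argues by contraposition: if $f-v=(q-q_0)*h$ with $h$ zero-free, then $f^{-1}(v)=\{q_0\}$ and $q_0\notin N_f$, so $f$ is a local diffeomorphism near $q_0$. Your three-case analysis (two preimages on one sphere, two preimages on distinct spheres, a single preimage forced into $N_f$ by the inverse function theorem) is sound and each step is covered by Theorems~\ref{factorization} and~\ref{formulae} and Proposition~\ref{noninvertibledifferential}. The first two cases of your reverse implication also match the paper's argument.

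The gap is in the last case of the reverse implication, where $q_1\in S_{q_0}$, the second preimage $\bar q_0$ is unavailable, and you reduce to $\partial_c f(q_0)=0$. You then invoke ``the classical complex version of the statement'' for the slice restriction $f_{L_I}$. But $f_{L_I}$ is a holomorphic map from a planar domain into $\hh\cong L_I\oplus L_I^{\perp}\cong\cc^2$, not into $\cc$, and for such holomorphic curves a vanishing derivative does \emph{not} force local non-injectivity (the model $z\mapsto(z^2,z^3)$ is injective). Concretely, take $f(q)=q*q*(1+qJ)=q^2+q^3J$ near $q_0=0$ and a slice $L_I$ with $J\in L_I^{\perp}$: your hypotheses hold with $v=0$, $q_0=q_1=0$ and $\partial_cf(0)=0$, yet $f_{L_I}(z)=z^2+z^3J$ is injective, since $z^2=w^2$ and $z^3=w^3$ force $z=w$. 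So the non-injectivity of $f$ cannot be detected on a single slice: the two nearby preimages of a regular value guaranteed by the total multiplicity $n\geq 2$ at $S_{q_0}$ need not lie in the same plane $L_I$. This is exactly why the paper invokes Theorem~\ref{singular} at this point, a genuinely four-dimensional statement about the persistence of the local multiplicity. Replacing your slice argument with an appeal to Theorem~\ref{singular} (which handles the whole case $q_1\in S_{q_0}$ at once, with no need to split on which of $\partial_cf(q_0)$, $\partial_sf(q_0)$ vanishes) closes the gap.
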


\begin{proof}
Suppose \[f(q) = v + (q-q_0)*(q-q_1)*g(g),\]
so that in particular $f(q_0)=v$.
If $q_1 \not \in S_{q_0}$ then $f-v$ vanishes at some $\widetilde q_1 \in S_{q_1}$. As a consequence, $f(\widetilde q_1)=v=f(q_0)$ and $f$ is not injective on any neighborhood of $f^{-1}(v) \supseteq \{q_0,\widetilde q_1\}$. If, on the other hand, $q_1 \in S_{q_0}$ then $q_0 \in N_f$ by Proposition~\ref{noninvertibledifferential}. In such a case, $f$ is not injective in any neighborhood of $f^{-1}(v)\ni q_0$ by Theorem~\ref{singular}.

Now let us prove the converse implication. If $v$ is the value of $f$ at $q_0$ then there exists $h: \Omega \to \hh$ such that $f(q)=v+(q-q_0)*h(q)$. If $h$ does not admit any zero $q_1 \in \Omega$ then $f$ does not take the value $v$ at any other point of $\Omega$ other than $q_0$ and $q_0 \not \in N_f$. As a consequence, $f^{-1}(v) = \{q_0\}$ and $f$ is a local diffeomorphism near $q_0$. A fortiori, $f$ is injective in a neighborhood of $f^{-1}(v)$.
\end{proof}

We conclude our overview of preliminary material with a few results concerning the unit ball $\B:= \{ q \in \hh\,\, | \,\, |q| <1 \}$. The work \cite{moebius} introduced the \emph{regular M\"obius transformations}, namely the functions $q\mapsto\mr_{q_0}(q)u$ with $u \in \partial\B$, $q_0 \in \B$ and
\[\mr_{q_0} (q):= (q-q_0)*(1-\bar q_0*q)^{-*} =(q-q_0)*(1-q\bar q_0)^{-*}= (1-q\bar q_0)^{-*}*(q-q_0).\]
These transformations are the only bijective self-maps of $\B$ that are regular. In \cite{BSIndiana}, the following result has been proven.

\begin{theorem}[Schwarz-Pick lemma]\label{schwarzpick}
Let $f : \B \to \B$ be a regular function and let $q_0 \in \B$. Then in $\B$:
\begin{eqnarray}\label{schwarzpickinequality}
|(f(q)-f(q_0))*(1-\overline{f(q_0)}*f(q))^{-*}| &\leq& |\mr_{q_0}(q)|.
\end{eqnarray}
Moreover,
\begin{eqnarray}
|\partial_c f *(1-\overline{f(q_0)}*f(q))^{-*}|_{|_{q_0}} &\leq& \frac{1}{1-|q_0|^2}\\
\frac{|\partial_s f(q_0)|}{|1-f^s(q_0)|} &\leq& \frac{1}{|1-\overline{q_0}^2|}
\end{eqnarray}
If $f$ is a regular M\"obius transformation of $\B$, then equality holds in the previous formulas. Else, all the aforementioned inequalities are strict (except for the first one at $q_0$, which reduces to $0\leq0$).
\end{theorem}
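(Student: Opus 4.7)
The plan is to adapt the classical proof of Schwarz--Pick to the regular quaternionic setting, constructing an auxiliary regular function on $\B$ and comparing it with the regular M\"obius transformation $\mr_{q_0}$.

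The first step is to introduce
\[F(q) := (f(q) - f(q_0)) * (1 - \overline{f(q_0)} * f(q))^{-*}\]
and verify three properties. (i) $F$ is regular on all of $\B$: this reduces to showing that the symmetrization of $1 - \overline{f(q_0)} * f$ has no zeros in $\B$, which follows from $|f(q)|, |f(q_0)| < 1$. (ii) $F(q_0) = 0$: immediate from Theorem~\ref{formulae}, since $(f(q) - f(q_0))$ vanishes at $q_0$. (iii) $F(\B) \subseteq \overline{\B}$, which is the crux: identity~\eqref{quozreg} (and its mirror for products of the form $g * f^{-*}$) rewrites $F(q)$ as the classical quaternionic M\"obius transformation $p \mapsto (p - f(q_0))(1 - \overline{f(q_0)} p)^{-1}$ evaluated at $f(T(q))$, for the diffeomorphism $T$ associated to $1 - \overline{f(q_0)} * f$ via Theorem~\ref{formulae}; since $T$ preserves the modulus on each sphere $S_q$ and the classical quaternionic M\"obius transformation sends $\overline{\B}$ to $\overline{\B}$, this yields $|F(q)| \leq 1$ on $\B$.

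The second step is to compare $F$ to $\mr_{q_0}$. By Theorem~\ref{factorization}, $F(q) = (q - q_0) * G(q)$ for a regular $G$ on $\B$, and by construction $\mr_{q_0}(q) = (q - q_0) * (1 - \bar q_0 q)^{-*}$. A slice-by-slice maximum modulus argument applied to $F * \mr_{q_0}^{-*}$, whose singularities on $S_{q_0}$ are removed by the common factor $(q - q_0)$, together with the boundary bound $|F| \leq 1 = |\mr_{q_0}|$ on $\partial \B$, yields $|F(q)| \leq |\mr_{q_0}(q)|$ throughout $\B$, which is precisely inequality~\eqref{schwarzpickinequality}.

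For the derivative bounds I would linearize~\eqref{schwarzpickinequality} at $q_0$ using Proposition~\ref{realdifferential}: the real differential of a regular function acts by right multiplication by $\partial_c$ along $L_I$ and by $\partial_s$ along $L_I^\perp$, so~\eqref{schwarzpickinequality} transfers to each component, and direct computation from the definition of $\mr_{q_0}$ yields the matching values $|\partial_c \mr_{q_0}(q_0)| = (1 - |q_0|^2)^{-1}$ and $|\partial_s \mr_{q_0}(q_0)| = |1 - \overline{q_0}^{\,2}|^{-1}$. The equality cases come from the rigidity in the Schwarz lemma: equality at some interior point forces the normalized quotient to be a unimodular constant, whence $f$ is a regular M\"obius transformation. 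I expect the principal obstacle to be property (iii) in the first step: because $|f*g|\neq|f||g|$ in general, the classical Schwarz--Pick manipulation does not translate directly, and circumventing this requires identity~\eqref{quozreg} together with the norm preservation of the conjugation $T_f$.
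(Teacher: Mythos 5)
This theorem is not proven in the paper itself: it is quoted from the reference \cite{BSIndiana}, and the paper only records the ingredients of that proof, namely Lemma~\ref{introducezero} (your Step~1), Lemma~\ref{casewithazero} and Lemma~\ref{moduli} (your Step~2). Your Step~1 is essentially a correct re-derivation of Lemma~\ref{introducezero}, by the same mechanism used in the cited literature: rewriting $F$ as a classical quaternionic M\"obius transformation applied to $f\circ T$ for a modulus-preserving $T$.

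The gap is in Step~2, and it is a genuinely noncommutative one: you form the quotient in the wrong order. Writing $F=(q-q_0)*G$ and $\mr_{q_0}^{-*}=(q-q_0)^{-*}*(1-q\bar q_0)$, your function is
\[F*\mr_{q_0}^{-*}=(q-q_0)*G*(1-q\bar q_0)*(q-q_0)^{-*},\]
where the factor $(q-q_0)^{-*}$ is separated from the factor $(q-q_0)$ by $G*(1-q\bar q_0)$; since $*$ is noncommutative, nothing cancels. Concretely, $\mr_{q_0}^{-*}=(\mr_{q_0}^s)^{-1}\mr_{q_0}^c$ has poles along the whole sphere $S_{q_0}$ (the zero set of $(q-q_0)^s$), and removability would require the numerator $(q-q_0)*G*(1-q\bar q_0)*(q-\bar q_0)$ to vanish identically on $S_{q_0}$, i.e., to have spherical multiplicity at least $2$ there; generically it only has two isolated zeros on that sphere (one from the leading factor, one from the trailing one), so the singularity is not removable. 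The correct object is $\mr_{q_0}^{-*}*F$: since $(q-q_0)$ and $(1-q\bar q_0)$ do $*$-commute, one gets $\mr_{q_0}^{-*}*F=(1-q\bar q_0)*G$, which is manifestly regular on $\B$, and the Maximum Modulus Principle (as in Lemma~\ref{casewithazero}) shows it is a self-map of $\B$. You then still need Lemma~\ref{moduli} --- precisely because $|f*g|\neq|f|\,|g|$ --- to pass from $|\mr_{q_0}^{-*}*F|\leq 1$ to $|F|\leq|\mr_{q_0}|$; your write-up only flags the failure of multiplicativity in Step~1(iii), not here. The derivative inequalities and the equality discussion are sketched plausibly, but completing them requires the strict form of Lemma~\ref{moduli} and the fact that the only regular bijections of $\B$ are the regular M\"obius transformations.
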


The proof was based on the following lemmas, proven in~\cite{volumeindam} and in~\cite{BSIndiana}, respectively.

\begin{lemma}\label{introducezero}
If $f : \B \to \B$ is a regular function then for all $q_0 \in \B$, the function $\widetilde f(q) := (f(q)-f(q_0))*(1-\overline{f(q_0)}*f(q))^{-*}$ is a regular function from $\B$ to itself with $\widetilde f(q_0)=0$.
\end{lemma}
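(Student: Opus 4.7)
Set $v := f(q_0)$ and $g(q) := 1 - \bar v * f(q)$. My first step is to verify that $\widetilde f = (f-v) * g^{-*}$ is regular on the whole of $\B$, which reduces to checking $\z_{g^s} \cap \B = \emptyset$. Because $\bar v$ is a constant, pointwise $g(q) = 1 - \bar v f(q)$, so $|g(q)| \geq 1 - |v|\,|f(q)| > 0$ on $\B$ and hence $\z_g \cap \B = \emptyset$. Since $\z_{g^s}$ is a union of entire spheres, each containing a zero of $g$, and every sphere $S_q$ with $q \in \B$ lies in $\B$, this forces $\z_{g^s} \cap \B = \emptyset$. The vanishing $\widetilde f(q_0) = 0$ then follows at once from the first alternative of Theorem~\ref{formulae}, as $(f-v)(q_0) = 0$.

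The substantive part is to establish $|\widetilde f(q)| < 1$ on $\B$. My plan is to fix $I \in \s$ with $v \in L_I$ and exploit the splitting $f_I = F + G J$, with $J \in \s$, $J \perp I$ and $F, G \colon \B_I \to L_I$ holomorphic. Since $\bar v \in L_I$ commutes with $F(z), G(z)$, the $*$-product and the regular reciprocal can be computed explicitly, producing an expression for $\widetilde f_I$ as an explicit $\hh$-valued holomorphic function of $z \in \B_I$. I expect that a direct calculation will reduce $|\widetilde f_I(z)|^2$ to a Möbius-type quotient in $F(z)$ and $G(z)$, and the classical identity
\[
|w-v|^2 = |1-\bar v w|^2 - (1-|w|^2)(1-|v|^2)\,,
\]
applied together with $|F(z)|^2 + |G(z)|^2 = |f_I(z)|^2 < 1$, will yield $|\widetilde f_I(z)| < 1$. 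The representation formula then extends the bound to every slice and hence to all of $\B$.

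The main obstacle I anticipate is carrying out the splitting computation cleanly: the regular reciprocal of $g$ mixes the $L_I$ and $L_I J$ components, and simplifying the resulting expression into a form to which the classical Möbius identity applies requires careful bookkeeping. A fully coordinate-free alternative, based on applying Theorem~\ref{formulae} to the identity $\widetilde f * g = f - v$ to extract $\widetilde f(q) = (f(q)-v)(1-\bar v f(\hat q))^{-1}$ with $\hat q \in S_q$, has the advantage of brevity but introduces a mismatch between the evaluation points of $f$ in the numerator and denominator, which must be resolved by a further symmetrization argument.
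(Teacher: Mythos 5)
First, a point of reference: the paper does not prove Lemma~\ref{introducezero} at all --- it imports it verbatim from~\cite{volumeindam} --- so your argument can only be judged on its own merits. Your first step (regularity of $\widetilde f$ on all of $\B$, and the vanishing at $q_0$) is essentially sound, but it contains a slip: since the constant $\bar v$ need not commute with $q$, the pointwise identity $(\bar v * f)(q)=\bar v\, f(q)$ is false; by Theorem~\ref{formulae} the correct statement is $(\bar v * f)(q)=\bar v\, f(\bar v^{-1}q\bar v)$. The estimate $|g(q)|\geq 1-|v|>0$ survives because $\bar v^{-1}q\bar v\in\B$, so your conclusion $\z_g\cap\B=\emptyset$ --- and hence, via the sphere structure of $\z_{g^s}$ and the circularity of $\B$, $\z_{g^s}\cap\B=\emptyset$ --- still stands.

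The genuine gap is in the substantive part, $|\widetilde f|<1$. The slice-splitting plan will not close as described: writing $f_I=F+GJ$ and $h_I=H+KJ$, the splitting of $(f*h)_I(z)$ involves $\overline{H(\bar z)}$ and $\overline{K(\bar z)}$, i.e.\ the values of the second factor at the \emph{reflected} point $\bar z$. Consequently $|\widetilde f_I(z)|$ is not a M\"obius-type quotient in $F(z),G(z)$ alone, and the single-point bound $|F(z)|^2+|G(z)|^2<1$ cannot be fed into the identity $|1-\bar vw|^2-|w-v|^2=(1-|v|^2)(1-|w|^2)$ in the way you anticipate. The mechanism that actually proves the lemma is the route you relegate to the ``alternative'': one first checks the $*$-identity $(1-f*\bar v)*(f-v)=(f-v)*(1-\bar v*f)$ (both sides equal $f-v-f*\bar v*f+|v|^2f$), which yields $\widetilde f=(1-f*\bar v)^{-*}*(f-v)$; then formula~\eqref{quozreg} evaluates \emph{both} factors at the same point $p:=T_{1-f*\bar v}(q)\in S_q$, giving $\widetilde f(q)=(1-f(p)\bar v)^{-1}(f(p)-v)$. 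The ``mismatch between evaluation points'' you worry about therefore does not arise and no further symmetrization is needed. Since $p\in S_q\subset\B$ forces $f(p)\in\B$, the classical identity (valid verbatim for quaternions, as $\mathrm{Re}(w\bar v)=\mathrm{Re}(\bar v w)$) gives $|\widetilde f(q)|<1$. Note that $p$ generally lies off the slice through $q$, so what is genuinely used is $f(\B)\subseteq\B$ on the whole ball --- precisely the information a single-slice computation at $z$ cannot exploit.
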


\begin{lemma}\label{casewithazero}
If $f : \B \to \B$ is a regular function having a zero at $q_0 \in \B$, then $\mr_{q_0}^{-*}*f$ is a regular function from $\B$ to itself.
\end{lemma}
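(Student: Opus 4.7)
My plan is to split the argument into two parts: first, show that the apparent singularity of $\mr_{q_0}^{-*}*f$ at $q_0$ is cancelled by the zero of $f$, so that the $*$-product extends to a regular function on all of $\B$; second, use a maximum-modulus argument to show that this extension is bounded in modulus by $1$.

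For the regularity part, I would invoke the Factorization Theorem~\ref{factorization} to write $f = (q - q_0) * h$ with $h : \B \to \hh$ regular; this uses only that $f(q_0) = 0$. Multiplying the defining identity $\mr_{q_0} = (q - q_0) * (1 - q \bar q_0)^{-*}$ on the right by $(1 - q \bar q_0)$ in the $*$-sense gives $\mr_{q_0} * (1 - q \bar q_0) = q - q_0$, whence $\mr_{q_0}^{-*} * (q - q_0) = 1 - q \bar q_0$. Substituting and using associativity of $*$ then yields
\[\mr_{q_0}^{-*} * f = \mr_{q_0}^{-*} * (q - q_0) * h = (1 - q \bar q_0) * h,\]
and the right-hand side is manifestly regular on all of $\B$.

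For the modulus bound on $F := \mr_{q_0}^{-*} * f$, I would use Theorem~\ref{formulae} to express
\[F(q) = \mr_{q_0}(T(q))^{-1} f(T(q))\]
on $\B \setminus S_{q_0} = \B \setminus \z_{\mr_{q_0}^s}$, where $T(q) := \mr_{q_0}^c(q)^{-1} q\, \mr_{q_0}^c(q)$ is a diffeomorphism of $\B \setminus S_{q_0}$ onto itself that preserves the modulus. Since $|f| \leq 1$ on $\B$ and since a regular M\"obius transformation of $\B$ extends to a bijection of $\overline{\B}$ sending $\partial\B$ to itself (a fact established in~\cite{moebius}), we have $|\mr_{q_0}(T(q))| \to 1$ as $|q| = |T(q)| \to 1^-$, so that
\[\limsup_{|q|\to 1^-}|F(q)| \;=\; \limsup_{|q|\to 1^-}\frac{|f(T(q))|}{|\mr_{q_0}(T(q))|} \;\leq\; 1.\]

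The proof would conclude by a slicewise maximum modulus argument: for every $I \in \s$, the restriction $F|_{\B_I}$ is a classical holomorphic map into $\hh$, so $|F|^2$ (the sum of squared moduli of its two components in a splitting $\hh = L_I \oplus L_I J$) is subharmonic on $\B_I$; combined with the boundary bound, this forces $|F| \leq 1$ on each $\B_I$, hence on all of $\B$. The step I expect to be the main obstacle is precisely the transfer of a bound from the distinguished slice $L_{I_0}\ni q_0$ (where a direct application of the classical vector-valued Schwarz lemma to $f|_{\B_{I_0}}$ vanishing at $q_0$ is available) to arbitrary slices: the representation formula yields only constants of order $\sqrt{2}$, so instead one must combine Theorem~\ref{formulae} with the isometric extension of $\mr_{q_0}$ to $\partial\B$ to obtain a uniform boundary bound on every slice simultaneously.
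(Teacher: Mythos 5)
Your proof is correct and follows essentially the same route as the one the paper relies on (from~\cite{BSIndiana}): factoring out the zero via $f=(q-q_0)*h$ to obtain the regular extension $(1-q\bar q_0)*h$ of $\mr_{q_0}^{-*}*f$, and then combining formula~\eqref{quozreg} with the modulus-preserving property of $T_{\mr_{q_0}}$ and the fact that $|\mr_{q_0}|\to1$ at $\partial\B$ to get the boundary bound $\limsup_{|q|\to1^-}|\mr_{q_0}^{-*}*f|\leq1$. The only difference is that your closing slicewise subharmonicity argument amounts to re-proving the Maximum Modulus Principle, which the paper invokes directly as Theorem~\ref{MaxPrinc}.
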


The proof of Lemma~\ref{casewithazero} used the next result (see \S7.1 in~\cite{librospringer}), which will be thoroughly used in the present work.

\begin{theorem}[Maximum Modulus Principle]\label{MaxPrinc}
Let $\Omega \subseteq \hh$ be a slice domain and let $f \colon \Omega \to \hh$ be regular. If $|f|$ has a relative maximum at $p \in \Omega,$ then $f$ is constant. As a consequence, if $\Omega$ is bounded and if, for all $q_0 \in \partial \Omega$, 
\[\limsup_{\Omega \ni q \to q_0} |f(q)| \leq M\]
then $|f|\leq M$ in $\Omega$ and the inequality is strict unless $f$ is constant.
\end{theorem}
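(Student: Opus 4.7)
My plan is to reduce both assertions to the classical complex Maximum Modulus Principle applied on a single slice, then propagate the conclusion to all of $\Omega$ via the Identity Principle for slice regular functions.

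For the first statement, write $p = x_0 + y_0 I$ for some $I \in \s$ and $x_0, y_0 \in \rr$ (taking $I$ arbitrary if $y_0 = 0$). By Definition~\ref{def:sliceregular}, the restriction $f_I \colon \Omega_I \to \hh$ is holomorphic in the Cullen sense, but since $\hh \cong L_I \oplus L_I^\perp$ as a real vector space and $L_I$ is identified with $\cc$, we can split $f_I = F_1 + F_2 J$ with $F_1, F_2 \colon \Omega_I \to L_I$ holomorphic (here $J \in \s$ is orthogonal to $I$). Now $|f_I|^2 = |F_1|^2 + |F_2|^2$ has a relative maximum at $p$, and the standard complex Maximum Modulus Principle, applied either directly to $|f_I|$ viewed as the modulus of a holomorphic map into $\cc^2$ or via a subharmonicity argument, forces $f_I$ to be constant on the connected component of $\Omega_I$ containing $p$. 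By the Identity Principle for slice regular functions on a slice domain (the restriction of $f$ to any slice determines $f$ uniquely whenever it vanishes on a set with an accumulation point in $\Omega_I$), $f$ must be constant on all of $\Omega$.

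For the consequence on bounded $\Omega$, the argument is by contradiction. Assume there is some $q^* \in \Omega$ with $|f(q^*)| > M$. The function $|f|$ is continuous on $\Omega$ and its $\limsup$ at every boundary point is at most $M < |f(q^*)|$, so a standard exhaustion/compactness argument produces a point $p \in \Omega$ where $|f|$ attains its supremum over $\Omega$, and at such $p$ we necessarily have $|f(p)| \geq |f(q^*)| > M$. By the first part just proven, $f$ is identically equal to some constant $c$ with $|c| > M$, contradicting the boundary $\limsup$ hypothesis. Hence $|f| \leq M$ on $\Omega$, and strict inequality holds unless $f$ is constant.

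The main obstacle I expect is the first paragraph: one needs the complex Maximum Modulus Principle for $\cc$-valued holomorphic functions to cover the $\hh$-valued holomorphic restriction $f_I$, and one needs the Identity Principle in a form valid on a possibly non-symmetric slice domain. Both are standard in the theory (cf.\ Chapter~1 of~\cite{librospringer}), so the proof is short once these tools are invoked; the delicate conceptual point is simply making sure that "relative maximum at $p$" restricts to a relative maximum of $|f_I|$ on $\Omega_I$, which is immediate because $\Omega_I$ carries the subspace topology from $\Omega$ and $p \in \Omega_I$ by construction.
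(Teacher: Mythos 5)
Your proof is correct. Note that the paper itself does not prove this statement: it is imported verbatim from \S 7.1 of \cite{librospringer}, and your argument (restrict to the slice $L_I$ through $p$, split $f_I=F_1+F_2J$, apply the vector-valued complex maximum principle or subharmonicity of $|F_1|^2+|F_2|^2$ to get constancy on $\Omega_I$, then propagate by the Identity Principle, and deduce the boundary version by compactness of $\overline{\Omega}$) is essentially the standard proof given in that reference; the only step left implicit is the one-line observation that if $|f|=M$ at an interior point while $|f|\leq M$ everywhere, that point is a relative maximum, which yields the strictness claim.
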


Finally, the following lemma, proven in~\cite{BSIndiana} as a further tool for the proof of Theorem~\ref{schwarzpick}, will also be useful later in this paper.

\begin{lemma}\label{moduli}
Let $f,g,h: B=B(0,R) \to \hh$ be regular functions. If $|f|\leq |g|$ then $|h*f| \leq |h*g|$. Moreover, if $|f|< |g|$ then $|h*f| < |h*g|$ in $B \setminus \z_h$, where we recall that $\z_h := \{ q \in B \,\, | \,\,  h(q)=0 \}$.
\end{lemma}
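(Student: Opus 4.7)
The plan is to apply the product formula from Theorem~\ref{formulae} pointwise and reduce the claim to the hypothesized pointwise inequality $|f|\le|g|$, exploiting the fact that quaternionic conjugation preserves moduli.

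First, I would handle points where $h$ does not vanish. Fix $q\in B\setminus\mathcal{Z}_h$ and set $T(q):=h(q)^{-1}qh(q)$. Theorem~\ref{formulae} gives
\[
(h*f)(q)=h(q)\,f(T(q)),\qquad (h*g)(q)=h(q)\,g(T(q)).
\]
Since conjugation by a nonzero quaternion preserves the norm, $|T(q)|=|q|<R$, so $T(q)\in B$ and the hypothesis $|f(T(q))|\le|g(T(q))|$ applies. Taking absolute values yields
\[
|(h*f)(q)|=|h(q)|\,|f(T(q))|\le |h(q)|\,|g(T(q))|=|(h*g)(q)|.
\]

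Next, I would dispose of the points where $h$ vanishes. By the first case of Theorem~\ref{formulae}, if $h(q)=0$ then $(h*f)(q)=0=(h*g)(q)$, and the inequality $|(h*f)(q)|\le|(h*g)(q)|$ holds trivially. Together with the previous step this proves the first assertion on all of $B$.

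Finally, for the strict inequality, I would repeat the computation above in $B\setminus\mathcal{Z}_h$: if $|f(p)|<|g(p)|$ for every $p\in B$, then for $q\in B\setminus\mathcal{Z}_h$ we have $h(q)\ne 0$ and $|f(T(q))|<|g(T(q))|$, whence multiplying by the positive real number $|h(q)|$ gives $|(h*f)(q)|<|(h*g)(q)|$. There is no genuine obstacle here; the only thing to check with care is that $T(q)$ really lies in $B$, which is immediate from $|T(q)|=|q|$, and that the exceptional set of the product formula coincides with $\mathcal{Z}_h$, which is exactly how $\mathcal{Z}_h$ is defined.
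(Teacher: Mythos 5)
Your proof is correct and is essentially the same argument as the one in \cite{BSIndiana}, to which the paper defers this lemma: the key observation is that the product formula of Theorem~\ref{formulae} evaluates both $h*f$ and $h*g$ at the \emph{same} point $T(q)=h(q)^{-1}qh(q)$, since $T$ depends only on the common left factor $h$, and $|T(q)|=|q|$ keeps that point inside $B$. The degenerate case $h(q)=0$ and the strict version are handled exactly as you do.
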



\section{A generalized Schwarz-Pick lemma}\label{sec:schwarzpick}

Our first step towards a quaternionic version of Landau's results is a special variant of Theorem~\ref{schwarzpick}, concerned with self-maps of $\B$ that are not injective. We will start with the next theorem and then achieve the result which we will apply later in the paper.

\begin{theorem}\label{schwarzpickvariant}
Let $f : \B \to \B$ be a regular function, let $q_0 \in \B$ and set $v:=f(q_0)$. If $f$ is not injective in any neighborhood of $f^{-1}(v)$ then there exists $p_0 \in \B$ such that
\[|(f(q) -v) * (1 - \bar v * f(q))^{-*}| \leq |\mr_{q_0} (q) * \mr_{p_0} (q)|.\]
Namely, if $q_0,q_1 \in \B$ are such that $f(q) = v + (q-q_0)*(q-q_1)*g(g)$ holds for some regular $g:\B\to \hh$, then the previous inequality holds with $p_0:= (1-q_1q_0)^{-1} q_1 (1-q_1q_0)\in S_{q_1}$.
\end{theorem}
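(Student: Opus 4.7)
The plan is to reduce the inequality to a self-mapping statement via two applications of Lemma~\ref{casewithazero}, one for each of the two zeros that the non-injectivity hypothesis supplies.

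First I would introduce $\widetilde f(q):=(f(q)-v)*(1-\bar v*f(q))^{-*}$; by Lemma~\ref{introducezero}, $\widetilde f:\B\to\B$ is regular with $\widetilde f(q_0)=0$, and the target inequality becomes $|\widetilde f|\leq|\mr_{q_0}*\mr_{p_0}|$. Since $f$ is not injective in any neighborhood of $f^{-1}(v)$, Proposition~\ref{violationofinjectivity} provides a factorization $f(q)-v=(q-q_0)*(q-q_1)*g(q)$ with $g:\B\to\hh$ regular; absorbing the factor $(1-\bar v*f)^{-*}$ into $g$, I can write
\[
\widetilde f(q) = (q-q_0)*(q-q_1)*H(q)
\]
for some regular $H:\B\to\hh$.

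By Lemma~\ref{casewithazero} applied at the zero $q_0$, the function $F_1:=\mr_{q_0}^{-*}*\widetilde f$ maps $\B$ into $\B$. Using $\mr_{q_0}^{-*}=(1-q\bar q_0)*(q-q_0)^{-*}$ and the cancellation $(q-q_0)^{-*}*(q-q_0)=1$, this simplifies to
\[
F_1(q) = (1-q\bar q_0)*(q-q_1)*H(q).
\]
The core step, which I expect to be the main technical obstacle, is to identify an explicit zero of $F_1$ in $\B$. By the vanishing rule in Theorem~\ref{formulae}, it suffices to find a zero of the degree-two $*$-polynomial $(1-q\bar q_0)*(q-q_1)$, which expanded equals $-q_1+q(1+\bar q_0 q_1)-q^2\bar q_0$. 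I would verify that it vanishes at
\[
p_0 := (1-q_1q_0)^{-1}q_1(1-q_1q_0) \in S_{q_1},
\]
which lies in $\B$ because $|p_0|=|q_1|<1$. Setting $\beta:=1-q_1q_0$, a direct substitution shows that $1-p_0\bar q_0=\beta^{-1}P(q_1)$, where $P(q_1):=1-2\operatorname{Re}(q_0)\,q_1+|q_0|^2 q_1^2$ has real coefficients. The required identity $p_0(1-p_0\bar q_0)=(1-p_0\bar q_0)q_1$ then reduces to $q_1P(q_1)=P(q_1)q_1$, which is automatic since $q_1$ commutes with any real-coefficient polynomial in $q_1$.

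Once $F_1(p_0)=0$ is established, a second application of Lemma~\ref{casewithazero} shows that
\[
F_2 := \mr_{p_0}^{-*}*F_1 = (\mr_{q_0}*\mr_{p_0})^{-*}*\widetilde f
\]
is a regular self-map of $\B$, hence $|F_2|\leq 1$ on $\B$. Since $\mr_{q_0}*\mr_{p_0}*F_2=\widetilde f$, Lemma~\ref{moduli} applied with $h=\mr_{q_0}*\mr_{p_0}$ and the pointwise bound $|F_2|\leq 1=|1|$ yields $|\widetilde f|\leq|\mr_{q_0}*\mr_{p_0}|$, as required.
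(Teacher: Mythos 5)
Your proof is correct and follows essentially the same route as the paper's: factor out the zero at $q_0$ via Lemma~\ref{casewithazero}, identify the second zero $p_0\in S_{q_1}$ of $(1-q\bar q_0)*(q-q_1)$, factor it out with a second application of Lemma~\ref{casewithazero}, and conclude with Lemma~\ref{moduli}. The only cosmetic difference is that you verify the vanishing at $p_0$ by direct computation with the real-coefficient polynomial $P(q_1)$, whereas the paper reads off $p_0=(1-q_1q_0)^{-1}q_1(1-q_1q_0)$ as the preimage of $q_1$ under the conjugation map of Theorem~\ref{formulae}; both verifications are valid.
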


\begin{proof}
If $f$ is not injective in any neighborhood of $f^{-1}(v)$ then Proposition~\ref{violationofinjectivity} applies. Let $q_0,q_1 \in \B$ be such that 
\[f(q) = v + (q-q_0)*(q-q_1)*g(g)\]
for some regular $g: \B \to \hh$. Now,
\begin{align*}
\widetilde f(q) :=& (f(q) -v) * (1 - \bar v * f(q))^{-*}\\
=& (q-q_0)*(q-q_1)*g(g)*(1 - \bar v * f(q))^{-*}
\end{align*}
is a self-map of $\B$ with a zero at $q_0$.
In particular, by Lemma~\ref{casewithazero}, $h := \mr_{q_0}^{-*}*\widetilde f$ is a regular function from $\B$ to itself.
Moreover,
\begin{align*}
h(q)&= (1-q\bar q_0)*(q-q_1)*g(g)*(1 - \bar v * f(q))^{-*}
\end{align*}
vanishes at $p_0:= (1-q_1q_0)^{-1} q_1 (1-q_1q_0)\in S_{q_1}$ by Theorem~\ref{formulae}. By applying again Lemma~\ref{casewithazero} we find that
\[\mr_{p_0}^{-*}*h = \mr_{p_0}^{-*}*\mr_{q_0}^{-*}*\widetilde f\]
is again a regular function from $\B$ to itself.
By Lemma~\ref{moduli} we conclude that
\[|\widetilde f|\leq |\mr_{q_0}*\mr_{p_0}|\]
in $\B$, as desired.
\end{proof}

The previous result was already known in the special cases when the violation of injectivity is caused by the vanishing of the Cullen or the spherical derivative at $q_0$: see Theorems 5.2 and 5.4 in~\cite{BSIndiana}. In such cases, the point $p_0$ appearing in the statement coincides with $q_0$ or $\bar q_0$, respectively.

We now exploit Theorem~\ref{schwarzpickvariant} and turn it into a result that will be particularly useful in the sequel.

\begin{theorem}\label{globaltolocal}
Let $f \colon \B \to \B$ be a regular function, which, for some $r \in (0,1)$, is injective in ${B}(0,r)$ but is not injective in ${B}(0,r')$ for any $r'>r$.
Then there exists $q_0 \in \partial B(0,r)$ such that $f$ is not injective in any neighborhood of $f^{-1}(f(q_0))$ and
\[
|(f(q) -f(q_0)) * (1 - \overline{f(q_0)}* f(q))^{-*}| \leq |\mr_{q_0}(q) * \mr_{p_0} (q)|.
\]
for some $p_0 \in \overline{B(0,r)}$. In particular, if $f(0) = 0$ then $|f(q_0)| \leq |q_0||p_0|\leq r^2$.
\end{theorem}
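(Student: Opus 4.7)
My plan is to produce the point $q_0 \in \partial B(0,r)$ by a compactness argument on witnesses of non-injectivity, then invoke Theorem~\ref{schwarzpickvariant} and control the location of the auxiliary $p_0$ through an appropriate factorization of $f - f(q_0)$, and finally specialize the inequality to $q=0$. First, for each $n \in \nn$ the hypothesis supplies distinct $a_n, b_n \in B(0, r + 1/n)$ with $f(a_n) = f(b_n)$; by compactness, up to subsequences, $a_n \to a$ and $b_n \to b$ with $|a|, |b| \le r$ and $f(a) = f(b)$. I split into two cases. If $a \neq b$, then since $f|_{B(0,r)}$ is injective at least one of $a, b$ must lie on $\partial B(0,r)$; after relabeling I set $q_0 := a \in \partial B(0,r)$ and $v := f(q_0)$, so that any neighborhood of $f^{-1}(v) \supseteq \{a, b\}$ contains two distinct preimages and $f$ fails to be injective on it. If instead $a = b$, then the real differential of $f$ at $a$ cannot be invertible (otherwise $f$ would be a local diffeomorphism near $a$, contradicting the existence of nearby distinct points $a_n, b_n$ with $f(a_n) = f(b_n)$), so $a \in N_f$; on the other hand, Corollary~\ref{diffeo} applied to the symmetric slice domain $B(0, r)$ forces $N_f \cap B(0, r) = \emptyset$, so necessarily $|a| = r$ and again $q_0 := a \in \partial B(0, r)$. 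In this second case Proposition~\ref{noninvertibledifferential} supplies some $\widetilde q_0 \in S_{q_0}$ and a regular $g$ with $f(q) = v + (q - q_0) * (q - \widetilde q_0) * g(q)$, and Proposition~\ref{violationofinjectivity} then yields the required non-injectivity near $f^{-1}(v)$.

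Next, Theorem~\ref{schwarzpickvariant} applies in both subcases and delivers the inequality together with a point $p_0 \in S_{q_1}$ read off from the chosen factorization $f(q) = v + (q - q_0) * (q - q_1) * g(q)$. In the subcase $a = b$ I take $q_1 = \widetilde q_0 \in S_{q_0}$ and obtain $|p_0| = r$. In the subcase $a \neq b$ I instead start from $f(q) - v = (q - q_0) * h(q)$, evaluate at the second preimage $b$ via the pointwise formula of Theorem~\ref{formulae}, and deduce that $h$ vanishes at $c := (b - q_0)^{-1} b (b - q_0) \in S_b$; factoring out $(q - c)$ (via Theorem~\ref{factorization}) gives $q_1 = c$ and hence $p_0 \in S_c$ with $|p_0| = |b| \le r$. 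In either case $p_0 \in \overline{B(0, r)}$. Assuming finally $f(0) = 0$, I specialize the inequality to $q = 0$: the coefficient-wise definition of the $*$-product reduces each $*$-product at the origin to the ordinary product of the evaluations, so the left-hand side equals $|v| = |f(q_0)|$ and the right-hand side equals $|q_0 p_0| = |q_0|\,|p_0| = r\,|p_0| \le r^2$, yielding the claimed chain $|f(q_0)| \le |q_0|\,|p_0| \le r^2$.

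The step I expect to be the main obstacle is the case $a = b$ of the limit argument: it is not a priori obvious that the two witnesses cannot coalesce strictly inside $B(0, r)$, and ruling this out requires upgrading the injectivity of $f|_{B(0,r)}$, via Corollary~\ref{diffeo}, into the absence of singular points throughout $B(0, r)$. A secondary nuisance is the bookkeeping when $a \neq b$: composing the zero-locating conjugation of Theorem~\ref{formulae} with the further conjugation prescribed by Theorem~\ref{schwarzpickvariant}, both supported on the sphere $S_b$, is what ultimately yields $|p_0| = |b|$.
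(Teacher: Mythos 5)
Your proof is correct and follows essentially the same route as the paper's: a compactness argument on witnesses of non-injectivity producing $q_0\in\partial B(0,r)$, an explicit factorization feeding into Theorem~\ref{schwarzpickvariant}, and evaluation of both sides at $q=0$ via the constant terms of the $*$-products. The only (harmless) divergence is in the coalescence case $a=b$: the paper concludes non-injectivity near $q_0$ directly from the approximating pairs of witnesses and then invokes Proposition~\ref{violationofinjectivity}, whereas you route through the singular set via Corollary~\ref{diffeo} and Proposition~\ref{noninvertibledifferential}, which in fact makes the location of $q_1$ (hence of $p_0$) more explicit.
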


\begin{proof}
For each $n\geq1$, since $f$ is not injective in ${B}(0,r+1/n)$, there exist two distinct points $p_n,q_n$ in ${B}(0,r+1/n)$ where $f$ takes the same value $v_n$. Since we supposed $f$ to be injective in $B(0,r)$, only one of the two points, say $p_n$, may be included in $B(0,r)$ while $q_n$ must have $r \leq |q_n| < r+ 1/n$. Therefore, $|q_n| \to r$ as $n\to+\infty$ and, up to refining the sequence, $q_n \to q_0$ for some $q_0 \in \partial B(0,r)$. Up to further refinements, $p_n \to p$ for some $p \in \overline{B(0,r)}$ and $v_n \to v \in \B$. Clearly, $f(q_0)=v=f(p)$. This immediately proves that $f$ is not injective near $f^{-1}(v) \supset \{q_0,p\}$, unless $q_0=p$. In this last case, we can still prove that $f$ is not injective in any neighborhood $U$ of $q_0$, as follows: by construction, $U$ includes two distinct points $p_n,q_n$ (for some $n \in \nn$) where $f(p_n) = v_n = f(q_n)$. 

We have thus proven that $f$ is not injective in any neighborhood of $f^{-1}(v) \supseteq\{q_0,p\}$. If we let $q_1$ be a point of $S_p$ such that \eqref{factorize} holds and if we set $p_0 := (1-q_1q_0)^{-1} q_1 (1-q_1q_0)\in S_p \subset \overline{B(0,r)}$, then by Theorem~\ref{schwarzpickvariant},
\[
|(f(q) -v) * (1 - \bar v * f(q))^{-*}| \leq |\mr_{q_0} * \mr_{p_0} (q)|.
\]
Under the additional hypothesis that $f(0)=0$, we now compute both hands of the inequality at $q=0$. For the left-hand side, we use the fact that $f(0) = f^c(0) = f^s(0) = 0$.
If we set $g(q) = f(q) -v$ then $g(0) = f(0)-v = -v$. If $h(q) =1 - \bar v * f(q)$ then $h^c(q) = 1 - f^c(q) v$, so that $h^c(0) = 1$; and $h^s(q) = (1 - \bar v * f(q))*(1 - f^c(q) v) = 1- \bar v * f(q) - f^c(q) v + \bar v * f^s(q) v$, so that $h^s(0)=1$. Therefore,
\[h^{-*}(0) = h^s(0)^{-1} h^c(0) = 1.\]
Finally,
\[g * h^{-*} (0) = g(0) h^{-*} (0) = - v.\]
As for the right-hand side of the inequality,
\[\mr_{q_0} * \mr_{p_0} (0) = \mr_{q_0} (0)\, \mr_{p_0} (0) = q_0\,p_0.\]
Thus, the inequality implies that
\[|v| \leq |q_0|\, |p_0| \leq r^2,\]
as desired.
\end{proof}


\section{Bounds for regular self-maps of the unit ball fixing the origin}\label{sec:bounds}

In the present section, we will establish upper and lower bounds for regular functions $\B\to\B$ that fix the origin $0$.
In addition to the regular M\"obius transformations
\[\mr_a(q)u = (1-q\bar a)^{-*}*(q-a)*u,\quad a \in \B, u \in \partial\B\,,\]
which we encountered in the previous sections, we will use the \emph{classical quaternionic M\"obius transformations}; namely, $v^{-1}M_a(q)u$ 
with $a \in \B, u,v \in \partial\B$ and with
\[M_a (q):= (1-q\bar a)^{-1}(q-a).\]
The latter transformations are studied in literature and well understood: they are a special case of the class studied in \cite{ahlforsmoebius}; see also the more recent \cite{poincare,poincaretrends}. A comparison between classical and regular M\"obius transformations is undertaken in \cite{volumeindam}.
The following  property, which is well-known in the complex case, will be very useful in the sequel.

\begin{lemma}
Fix $b \in \B$. Then, for all $q \in \B$,
\begin{equation}\label{eq:quatmoebincrease}
 \frac{|b|-|q|}{1-|b||q|} \leq |M_b(q)| \leq \frac{|q|+|b|}{1+|b||q|}.
\end{equation}
Moreover, equality holds in the left-hand side if, and only if, $q=rb$ for some $r\in[0,1]$; and it holds in the right-hand side if, and only if, $q=rb$ for some $r\in(-\infty,0]$.
\end{lemma}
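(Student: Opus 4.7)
The plan is to exploit that the quaternionic norm is multiplicative and thereby reduce the estimate to a single-variable real extremization. Since $|uv|=|u||v|$ for all $u,v\in\hh$, one has
\[|M_b(q)|=\frac{|q-b|}{|1-q\bar b|}.\]
Expanding $|q-b|^2=(q-b)\overline{(q-b)}$ and $|1-q\bar b|^2=(1-q\bar b)\overline{(1-q\bar b)}$ and collecting the cross terms shows that both squared moduli depend on $q$ and $b$ only through $|q|$, $|b|$ and the single real parameter $t:=\mathrm{Re}(q\bar b)$.

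Concretely I would write
\[|M_b(q)|^2=\frac{A-2t}{B-2t},\qquad A:=|q|^2+|b|^2,\quad B:=1+|q|^2|b|^2,\]
and record the crucial identity $B-A=(1-|q|^2)(1-|b|^2)\geq 0$. Differentiating in $t$ then shows that the right-hand side is a non-increasing function of $t$ on any interval inside $(-\infty,B/2)$, so its range is controlled by its endpoint values. The Cauchy--Schwarz inequality $|\mathrm{Re}(q\bar b)|\leq|q\bar b|=|q||b|$ confines $t$ to $[-|q||b|,|q||b|]$, so the extrema of $|M_b(q)|^2$ are attained at $t=\pm|q||b|$, and a direct simplification of the numerator and denominator in each case produces $\bigl((|q|\mp|b|)/(1\mp|q||b|)\bigr)^2$.

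Taking square roots gives $||q|-|b||/(1-|q||b|)\leq|M_b(q)|\leq(|q|+|b|)/(1+|q||b|)$, which upgrades to the stated inequality after noting the trivial bound $|b|-|q|\leq\bigl||q|-|b|\bigr|$ (equality iff $|q|\leq|b|$). For the equality cases, $t=|q||b|$ is precisely the equality case of Cauchy--Schwarz for the real inner product on $\hh$, equivalent to $q\bar b$ being a nonnegative real, i.e.\ $q=rb$ with $r\geq 0$; combined with the constraint $|q|\leq|b|$ required for the left-hand side to be tight, this pins down $r\in[0,1]$. Symmetrically, $t=-|q||b|$ forces $q\bar b$ to be a nonpositive real, i.e.\ $q=rb$ with $r\leq 0$, which is exactly the stated condition for equality on the right-hand side. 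The degenerate case $b=0$ reduces to $M_0(q)=q$ and is consistent with both characterizations.

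There is no genuine obstacle, only a bookkeeping subtlety: the lower bound is stated with the signed quantity $|b|-|q|$ rather than $\bigl||q|-|b|\bigr|$, so when $|q|>|b|$ the inequality is trivial and the equality analysis has to incorporate the extra inequality $|q|\leq|b|$ on top of the Cauchy--Schwarz equality condition. Apart from this, the proof is essentially a one-variable monotonicity argument applied after peeling off $q$ and $b$ via their moduli and $\mathrm{Re}(q\bar b)$.
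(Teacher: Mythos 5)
Your proof is correct and is essentially the paper's argument in different packaging: both rest on the identity $|1-\bar b q|^2-|q-b|^2=(1-|q|^2)(1-|b|^2)$ together with the bound $|\mathrm{Re}(q\bar b)|\le|q||b|$ (equivalently $1-|b||q|\le|1-\bar b q|\le 1+|b||q|$), the paper running a chain of algebraic inequalities where you run a one-variable monotonicity argument in $t=\mathrm{Re}(q\bar b)$. Your explicit handling of the passage from $\bigl||q|-|b|\bigr|$ to $|b|-|q|$ in the equality analysis is a point the paper leaves implicit, and is done correctly.
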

\begin{proof}
By direct computation,
\begin{align*}
&1-|b||q|\leq |1-\bar b q|\leq1+|b||q|\\
&\Rightarrow \frac{1}{(1+|b||q|)^2} \leq \frac{1}{\big|1-\bar b q\big|^2} \leq \frac{1}{(1-|b||q|)^2}\\
& \Rightarrow \frac{1+|b|^2|q|^2-|b|^2-|q|^2}{(1+|b||q|)^2} \leq \frac{1+|b|^2|q|^2-|b|^2-|q|^2}{\big|1-\bar b q\big|^2} \leq \frac{1+|b|^2|q|^2-|b|^2-|q|^2}{(1-|b||q|)^2}\\
&\Rightarrow \frac{\big|1+|b||q|\big|^2-\big||q|+|b|\big|^2}{(1+|b||q|)^2} \leq \frac{\big|1-\bar b q\big|^2-\big|q-b\big|^2}{\big|1-\bar b q\big|^2} \leq \frac{\big|1-|b||q|\big|^2-\big||q|-|b|\big|^2}{(1-|b||q|)^2}\\
&\Rightarrow 1-\frac{\big||q|+|b|\big|^2}{(1+|b||q|)^2} \leq 1-\frac{\big|q-b\big|^2}{\big|1-\bar b q\big|^2} \leq 1-\frac{\big||q|-|b|\big|^2}{(1-|b||q|)^2}\\
&\Rightarrow \frac{\big||q|-|b|\big|}{1-|b||q|} \leq \frac{\big|q-b\big|}{\big|1-\bar b q\big|} \leq \frac{\big||q|+|b|\big|}{1+|b||q|}\\
&\Rightarrow  \frac{|b|-|q|}{1-|b||q|} \leq |M_b(q)| \leq \frac{|q|+|b|}{1+|b||q|}.
\end{align*}
Moreover, equality $|1-\bar b q|= 1\pm|b||q|$ holds if and only if $\bar b q = \mp |b||q|$.
\end{proof}

\begin{remark}\label{quatmoebincrease}
If $a \in (0,1)$ then $M_a$ coincides with the regular transformation $\mr_a$. Such an $\mr_a=M_a$ and its inverse function $\mr_{-a}=M_{-a}$ fix $-1$ and $1$ and they map $(-1,1)$ bijectively into itself in a monotone increasing fashion.
\end{remark}

We now establish the announced upper and lower bounds for regular self-maps of $\B$ that fix the origin.

\begin{theorem}\label{minmax}
Let $f : \B \to \B$ be a regular function with $f(0)=0$. If $a := |\partial_cf(0)|$ belongs to $(0,1)$ then
\begin{equation}\label{eq:minmax}
|q|\frac{a-|q|}{1-a|q|} \leq |f(q)| \leq |q|\frac{|q|+a}{1+a|q|}.
\end{equation}
for all $q \in \B$. Furthermore, if there exists $q \in \B$ such that equality holds on the left-hand side or on the right-hand side, then $f(q) = q \mr(q)$ where $\mr$ is a regular M\"obius transformation of $\B$ with $|\mr(0)| = a$.
\end{theorem}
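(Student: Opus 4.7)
My approach is to pull out a factor of $q$ from $f$ and reduce to a Schwarz--Pick bound for the regular factor. Since $f(0)=0$, write $f(q) = q * g(q)$ for a regular $g:\B\to\hh$ with $g(0) = \partial_c f(0)$, so $|g(0)| = a$. The pointwise formula $q*g(q) = q\,g(q)$ (a consequence of Theorem~\ref{formulae} applied with the identity function in the first factor) combined with the Schwarz corollary $|f(q)|\leq |q|$ of Theorem~\ref{schwarzpick} gives $|g(q)|\leq 1$ on $\B\setminus\{0\}$; the equality case would force $a=1$, contradicting $a\in(0,1)$, so $g$ is actually a self-map of $\B$.

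Next I apply Theorem~\ref{schwarzpick} to $g$ at the origin: with $b:=g(0)$, the function $\widetilde g := (g-b)*(1-\bar b*g)^{-*}$ is a regular self-map of $\B$ with $\widetilde g(0)=0$, hence $|\widetilde g(q)|\leq|q|$. Rewriting the defining identity $\widetilde g * (1 - \bar b * g) = g - b$ in the form $(1+\widetilde g*\bar b)*g = \widetilde g + b$ and inverting in the ring of regular functions yields
\[
g \;=\; (1+\widetilde g*\bar b)^{-*} * (\widetilde g+b).
\]
Setting $C:=1+\widetilde g*\bar b$, the zero set $\z_{C^s}$ is nowhere dense (since $C(0)=1$), and Theorem~\ref{formulae} then gives $g(q) = (1+\widetilde g(q')\bar b)^{-1}(\widetilde g(q')+b) = M_{-b}(\widetilde g(q'))$ on its complement, where $q':=T_C(q)$ satisfies $|q'|=|q|$.

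Now I invoke the lemma on classical quaternionic Möbius transformations just proven: $|M_{-b}(w)| \leq (|w|+a)/(1+a|w|)$ and $|M_{-b}(w)| \geq (a-|w|)/(1-a|w|)$. Applied to $w=\widetilde g(q')$ with $|w|\leq|q'|=|q|$, and using the monotonicity of the bounds in $|w|$, I obtain $(a-|q|)/(1-a|q|) \leq |g(q)| \leq (|q|+a)/(1+a|q|)$; this extends to $\z_{C^s}$ by continuity, and multiplying by $|q|$ gives~\eqref{eq:minmax}. For the equality statement, equality in either bound at some $q\in\B\setminus\{0\}$ forces $|\widetilde g(q')|=|q'|$, so the equality case of the Schwarz lemma yields $\widetilde g(q)=qu$ identically for some $u\in\partial\B$; back-substitution into the inversion formula and a direct verification (using that $(1+qu\bar b)$ and $(q+b\bar u)$ have equal regular products in both orders) identify $g$ with the regular Möbius transformation $\mr_{-b\bar u}(q)\,u$, which satisfies $\mr(0)=b$ and thus $|\mr(0)|=a$, giving $f(q)=q\,\mr(q)$.

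The main obstacle I anticipate is the algebraic inversion step: applying Theorem~\ref{formulae} correctly to $(1+\widetilde g*\bar b)^{-*}*(\widetilde g+b)$ and recognizing the resulting pointwise expression as $M_{-b}(\widetilde g(q'))$ with $|q'|=|q|$, so that the modulus bounds from the classical Möbius lemma transfer cleanly. Once this identification is in place, the modulus bounds and the equality analysis follow from standard Schwarz-Pick machinery and the monotonicity already established.
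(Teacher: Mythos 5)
Your proposal is correct and follows essentially the same route as the paper's proof: factor $f(q)=q\,g(q)$, show $g$ is a self-map of $\B$, introduce $\widetilde g$ via Lemma~\ref{introducezero}, invert to write $g=M_{-b}\circ\widetilde g\circ T$ using formula~\eqref{quozreg}, and then combine the classical M\"obius modulus bounds with the Schwarz lemma and monotonicity, including the same equality analysis. The only (harmless) differences are that you keep $b=g(0)$ general instead of rotating so that $g(0)=a\in(0,1)$, and you derive the inversion identity directly rather than citing it.
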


\begin{proof}
By hypothesis, $f(q) = q*g(q) = q g(q)$ for some $g: \B \to \hh$ with $g(0) = \partial_c f(0)$. Up to rotating both $f$ and $g$, we may suppose that $g(0) =a$. Moreover, 
\[|g(q)| = \frac{|f(q)|}{|q|} \leq \frac{1}{|q|}.\]
For any $r \in (0,1)$, we can conclude by the Maximum Modulus Principle~\ref{MaxPrinc}, that $|g(q)|\leq 1/r$ for all $q \in B(0,r)$. Hence, $g(\B) \subseteq \B$. By Lemma~\ref{introducezero},
\[\widetilde g(q) := (g(q)-a)*(1-a*g(q))^{-*}\]
is a regular self-map of $\B$ with $\widetilde g(0)=0$. Moreover, by Proposition~2 of~\cite{volumeindam}
\[g(q) = (\widetilde g(q)+a)*(1+a*\widetilde g(q))^{-*} = (1+ \widetilde g(q)a)^{-*}* (\widetilde g(q)+a)\,.\]
By formula \eqref{quozreg}, after setting $T(q):= (1+\widetilde g^c(q)a)^{-1}q(1+\widetilde g^c(q)a)$, we have that
\[g = M_{-a} \circ \widetilde g \circ T.\]
If we set $h:=\widetilde g \circ T$ then
\[f(q) = q M_{-a}(h(q))\]
whence, by inequality~\eqref{eq:quatmoebincrease}, 
\[|q|\frac{a-|h(q)|}{1-a|h(q)|} \leq |f(q)| \leq |q| \frac{|h(q)|+a}{1+a|h(q)|}.\]
Moreover, 
\[|h(q)| = |\widetilde g(T(q))| \leq |T(q)| = |q|\]
since (by the Schwarz lemma) $|\widetilde g(p)| \leq |p|$ for all $p \in \B$. For the same reason, $|h(q)| = |q|$ for some $q \in \B$ if, and only if, $\widetilde g$ is a regular rotation, i.e., $\widetilde g (q) = qu$ for some $u \in \partial \B$. If we take into account that $M_{-a}$ is strictly monotone increasing on $(-1,1)$, see Remark \ref{quatmoebincrease}, then
\[\frac{|h(q)|+a}{1+a|h(q)|} \leq \frac{|q|+a}{1+a|q|}\]
and equality holds  if, and only if, $\widetilde g$ is a regular rotation.
Similarly, since $-M_a$ is strictly monotone decreasing on $(-1,1)$ we conclude that
\[\frac{a-|q|}{1-a|q|} \leq \frac{a-|h(q)|}{1-a|h(q)|}\] 
and that an equality holds if, and only if, $\widetilde g$ is a regular rotation.
We have thus proven inequality~\eqref{eq:minmax} and shown that any equality in~\eqref{eq:minmax} implies that $f(q) = q \mr(q)$ where $\mr$ is a regular M\"obius transformation of $\B$. In such a case, since $a := |\partial_cf(0)|$, necessarily $|\mr(0)|=a$.
\end{proof}


\section{A quaternionic version of Landau's results}\label{sec:landau}

The announced extension of Landau's Theorems~\ref{landau1} and~\ref{landau2} to regular quaternionic functions is achieved in the present section. We begin by studying a special class of functions that will play an important role in our main result.

\begin{lemma}\label{extremalcase}
Let $F(q) := q \mr(q)$ where $\mr$ is any regular M\"obius transformation of $\B$. Suppose that $a := |\mr(0)|$ is not zero and set $\rho:=\frac{1-\sqrt{1-a^2}}{a}$. Then there exists a point $q_0$ with $|q_0|=\rho$ where the Cullen derivative $\partial_c F$ vanishes and such that $|F(q_0)|=\rho^2$. In particular, if $\Phi(q) = - q \mr_a(q)$ then
\begin{enumerate}
\item$\partial_c \Phi(\rho)=0$;
\item $\Phi(\rho)=\rho^2$ or, equivalently, $-\mr_a(\rho) = \rho$;
\item $\Phi$ maps $(-1,\rho)$ to $(-1,\rho^2)$ in a monotone increasing fashion and $(\rho,1)$ to $(-1,\rho^2)$ in a monotone decreasing fashion.
\end{enumerate}
\end{lemma}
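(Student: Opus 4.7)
The plan is to first verify the three numbered assertions about $\Phi$ and then to deduce the general existence statement for $F$ by reducing to the real-variable function $\Phi$ on a single slice. Throughout, the key arithmetic input is that $\rho=\frac{1-\sqrt{1-a^2}}{a}$ is the root in $(0,1)$ of the quadratic $aq^2-2q+a=0$, which factors as $a(q-\rho)(q-1/\rho)$.

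First I would handle $\Phi(q)=-q\mr_a(q)$. From $a(\rho^2+1)=2\rho$ a one-line manipulation yields $\mr_a(\rho)=\frac{\rho-a}{1-a\rho}=-\rho$, which is the alternative form of assertion (2) and gives $\Phi(\rho)=-\rho\cdot(-\rho)=\rho^2$. Restricting $\Phi$ to the real axis (where it is a genuine real rational function) and differentiating by hand gives
\[
\Phi'(q)=\frac{aq^2-2q+a}{(1-aq)^2}=\frac{a(q-\rho)(q-1/\rho)}{(1-aq)^2};
\]
at the real point $\rho$ the Cullen derivative coincides with the classical derivative by Proposition~\ref{realdifferential}, so $\partial_c\Phi(\rho)=\Phi'(\rho)=0$, which is (1). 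On $(-1,1)$ the factors $q-1/\rho<0$ and $(1-aq)^2>0$ have constant sign, so the sign of $\Phi'$ is governed by $q-\rho$ alone; together with the boundary values $\Phi(\pm 1)=-1$ and $\Phi(\rho)=\rho^2$ this gives the monotonicity claim (3).

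For the general statement I would write $\mr(q)=\mr_c(q)u$ with $c\in\B$, $u\in\partial\B$. Since $\mr(0)=-cu$, we have $|c|=a$, so I set $c=a\alpha$ with $\alpha\in\hh$, $|\alpha|=1$, and fix $I\in\s$ with $\alpha\in L_I$. On the slice $L_I$, which contains both $c$ and $q\alpha$ for every $q\in L_I$, all regular $*$-products collapse to ordinary complex multiplications and $\mr_c$ restricts to the classical complex M\"obius transformation. Using $\bar\alpha\alpha=1$ and commutativity inside $L_I$, a direct computation yields, for every $q\in L_I\cap\B$,
\[
\mr_c(q\alpha)=(q-a)\alpha\,(1-aq)^{-1}=\mr_a(q)\alpha,\qquad F(q\alpha)=q\mr_a(q)\alpha^2u=-\Phi(q)\alpha^2u.
\]
Setting $q_0:=\rho\alpha$ one reads off $|q_0|=\rho$ and $|F(q_0)|=|\Phi(\rho)|=\rho^2$. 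To obtain $\partial_cF(q_0)=0$, I would combine Proposition~\ref{realdifferential} (the real differential of $F$ at $q_0\in L_I$ acts on $L_I$ by right multiplication by $\partial_cF(q_0)$) with the classical chain rule applied to the complex-holomorphic map $q\mapsto F(q\alpha)$ on $L_I\cap\B$, obtaining
\[
\alpha\,\partial_cF(\rho\alpha)=\left.\tfrac{d}{dq}\right|_{q=\rho}F(q\alpha)=-\Phi'(\rho)\,\alpha^2u=0,
\]
so that the invertibility of $\alpha$ forces $\partial_cF(q_0)=0$.

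The only real obstacle is keeping the non-commutative bookkeeping honest. The device that removes it is the reduction to the slice $L_I$ containing $\alpha$ (hence $c$), where $*$-products become ordinary complex products, $\mr_c$ becomes the classical M\"obius transformation, and the twisted evaluation $F(q\alpha)$ is identified, up to the constant right factor $\alpha^2u$, with the real-variable function $-\Phi(q)$; once this identification is set up, every assertion of the lemma reads off from the elementary calculus of $\Phi$ on the real axis.
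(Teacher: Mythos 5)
Your proposal is correct and follows essentially the same route as the paper: verify the model case $\Phi(q)=-q\mr_a(q)$ by computing $\partial_c\Phi(q)=(1-qa)^{-2}(aq^2-2q+a)$ and exploiting the quadratic $a\rho^2-2\rho+a=0$, then reduce a general $F(q)=q\,\mr_c(q)u$ to $\Phi$ by restricting to the slice containing the center $c=a\alpha$ and performing a rotation of the variable. The only cosmetic difference is that the paper first normalizes $\mr$ to send $0$ to $a$ (introducing an intermediate $\Phi_u$ with $\Phi_u(z)=-\Phi(-zu)\bar u$) whereas you absorb that normalization directly into the substitution $q\mapsto q\alpha$; the content is identical.
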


\begin{proof}
Let us fix $a \in (0,1)$. We start with the special case of the function 
\[\Phi(q) := q* (1-qa)^{-*} *(a-q)= (1-qa)^{-1} q (a-q),\]
for which we make the following remarks.
\begin{enumerate}
\item By direct computation, 
\begin{align*}
\partial_c \Phi(q) &=(1-qa)^{-2} [ (1-qa) (a-q) + qa (a-q) - (1-qa) q]\\
& = (1-qa)^{-2}(aq^2-2q+a),
\end{align*}
whose only zero inside $\B$ is $\rho$. 
\item By the definition of $\Phi$, $\Phi(\rho) = \rho^2$ if, and only if, $-\mr_a(\rho) = \rho$. This is equivalent to $a-\rho = (1-\rho a) \rho$, that is, to $a\rho^2 -2\rho + a = 0$, which is true by the definition of $\rho$.
\item Since $\mr_a$ fixes $-1$ and $1$, the function $\Phi$ maps both points to $-1$. Moreover, we have just proven that $\Phi$ maps $\rho$ to $\rho^2$. The thesis follows by observing that $\Phi$ must be monotone on either interval $(-1,\rho),(\rho,1)$ since the Cullen (hence the real) derivative only vanishes at $\rho$.
\end{enumerate}
Now, a regular M\"obius transformation $\mr_b u$ (with $b \in \B, u \in \partial \B$) maps $0$ to $a$ if, and only if, $a = \mr_b(0) u = -b u$, that is, $b=-a\bar u$. If we set
\[\Phi_u(q) := q \mr_{-a\bar u}(q) u\]
then restricting to the plane $L_I$ that includes $u$ we get that, for all $z \in \B_I = \B \cap L_I$,
\[\Phi_u(z) = z \frac{z+a\bar u}{1+zua} u = z u \bar u \frac{zu+a}{1+zua} = -\Phi(-zu) \bar u\]
and $\partial_c \Phi_u(z) = \partial_c \Phi(-zu)$. As a consequence, for $q_0 := -\rho \bar u$ we can compute $|\Phi_u(q_0)|=\rho^2$ and $\partial_c\Phi_u(q_0) =0$.

Finally, let us consider $F(q):=q \mr(q)$ where $\mr$ is any regular M\"obius transformation of $\B$ such that $|\mr(0)| = a$. If we set 
\[v := \frac{\overline{\mr(0)}}{|\mr(0)|}\]
then $\mr (0) v = a,$ and $\mr v$ is a regular M\"obius transformation of $\B$ mapping $0$ to $a$. Hence, $F(q) v = q \mr(q) v = \Phi_u(q)$ for some $u \in \B$ and the thesis follows from what we have already proven for $\Phi_u$.
\end{proof}

We are now ready for the announced result.

\begin{theorem}\label{landau}
Let $f : \B \to \B$ be a regular function with $f(0)=0$. If $a := |\partial_cf(0)|$ belongs to $(0,1)$ and if we set $\rho := \frac{1-\sqrt{1-a^2}}{a}$ then the following properties hold.
\begin{enumerate}
\item The function $f$ is injective at least in the ball $B(0, \rho)$. 
\item For all $r \in (0, \rho)$, $B\big(0, r\frac{a-r}{1-ar}\big) \subseteq  f({B}(0,r)) \subseteq {B}\big(0, r \frac{a+r}{1+ar}\big)$. As a consequence,
\[{B}(0, \rho^2) \subseteq f(B(0, \rho) ).\]
\item The following are equivalent:
\begin{enumerate}
\item $B(0,\rho)$ is the largest ball centered at $0$ where $f$ is injective;
\item there exists a point $q_0 \in \partial {B}(0,\rho)$ with $f(q_0) \in \partial{B}(0, \rho^2)$;
\item $f(q) = q \mr(q)$ where $\mr$ is a regular M\"obius transformation of $\B$ (necessarily such that $\mr(0)=\partial_cf(0)$, whence $|\mr(0)|=a$).
\end{enumerate}
\end{enumerate}
\end{theorem}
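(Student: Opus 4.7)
Set $r_0:=\sup\{r\in(0,1]:f \text{ is injective on } B(0,r)\}$, which is positive since $\partial_c f(0)\neq 0$ forces local invertibility at $0$. A standard limiting argument shows $f$ is actually injective on $B(0,r_0)$, and by the supremum property $f$ is not injective on $B(0,r')$ for any $r'>r_0$. If $r_0=1$ we are done. Otherwise Theorem~\ref{globaltolocal} yields some $q_0\in\partial B(0,r_0)$ with $|f(q_0)|\leq r_0^2$. The lower bound in Theorem~\ref{minmax}, evaluated at $q_0$, reads $|f(q_0)|\geq r_0\,\frac{a-r_0}{1-a r_0}$ and is informative only when $r_0<a$. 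If $r_0\geq a$ then $r_0\geq a>\rho$ because $\rho=\frac{a}{1+\sqrt{1-a^2}}<a$. If $r_0<a$, combining the two estimates gives $a r_0^2-2r_0+a\leq 0$, and since $\rho$ is the smaller of the two roots of $at^2-2t+a$, this forces $r_0\geq\rho$.

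\textbf{Part 2 (inclusions for the image).} Fix $r\in(0,\rho)$. By Part~1, $f$ is injective on $B(0,r)$, so Corollary~\ref{diffeo} ensures its singular set is empty there and $f$ restricts to a diffeomorphism onto the open image $U:=f(B(0,r))$. The right inclusion follows from the monotonicity of $t\mapsto t\,\frac{t+a}{1+at}$ on $[0,1)$ together with the right-hand estimate of Theorem~\ref{minmax}. For the left inclusion, let $\rho':=r\,\frac{a-r}{1-ar}$, which is positive because $r<\rho<a$. The left-hand estimate of Theorem~\ref{minmax} shows $f(\partial B(0,r))\cap B(0,\rho')=\emptyset$, so
\[B(0,\rho')=\big(B(0,\rho')\cap U\big)\sqcup\big(B(0,\rho')\setminus f(\overline{B(0,r)})\big)\]
is a disjoint union of two open subsets, the first containing $0=f(0)$. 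Connectedness of $B(0,\rho')$ forces the second to be empty, giving $B(0,\rho')\subseteq U$. The inclusion $B(0,\rho^2)\subseteq f(B(0,\rho))$ then follows by letting $r\nearrow\rho$, using the key identity $\rho\,\frac{a-\rho}{1-a\rho}=\rho^2$, which is merely a rearrangement of $a\rho^2-2\rho+a=0$.

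\textbf{Part 3 (characterization of the extremal case).} The identity $\rho\,\frac{a-\rho}{1-a\rho}=\rho^2$ is again the engine. For (a)$\Rightarrow$(c), the hypothesis places us in the setting of Theorem~\ref{globaltolocal} with $r=\rho$, producing $q_0\in\partial B(0,\rho)$ with $|f(q_0)|\leq\rho^2$; combined with the lower bound of Theorem~\ref{minmax} at $q_0$, which equals $\rho^2$, we obtain equality in the left-hand side of~\eqref{eq:minmax}, and the equality statement of Theorem~\ref{minmax} delivers $f(q)=q\mr(q)$ with $|\mr(0)|=a$. For (b)$\Rightarrow$(c), the same equality-in-the-lower-bound argument applies directly at the postulated $q_0$. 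For (c)$\Rightarrow$(b), Lemma~\ref{extremalcase} produces exactly the desired $q_0\in\partial B(0,\rho)$ with $|F(q_0)|=\rho^2$. For (c)$\Rightarrow$(a), Lemma~\ref{extremalcase} additionally supplies $\partial_c F(q_0)=0$; hence $q_0\in N_F$ by Proposition~\ref{noninvertibledifferential}, and Proposition~\ref{violationofinjectivity} (or Theorem~\ref{singular}) shows $F$ is not injective in any neighborhood of $q_0$. Since $q_0\in\partial B(0,\rho)$, every $B(0,\rho+\varepsilon)$ is such a neighborhood, so $f$ is not injective there; together with Part~1 this pins down $B(0,\rho)$ as the largest injectivity ball centered at $0$.

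\textbf{Anticipated obstacle.} The most delicate step is the topological separation in Part~2: one must verify that the lower bound in Theorem~\ref{minmax} places the entire boundary image $f(\partial B(0,r))$ in the closed complement of $B(0,\rho')$, which is precisely what makes the disjoint-union decomposition work and rules out any portion of $\partial U$ lying inside $B(0,\rho')$. The implication (c)$\Rightarrow$(a) also deserves care, since translating $\partial_c F(q_0)=0$ into non-injectivity of $F$ on every enlarged ball must be done carefully; the convenient bookkeeping is provided by the explicit two-branch description of the model function $\Phi$ in item~(3) of Lemma~\ref{extremalcase}, which exhibits distinct preimages on either side of $q_0$.
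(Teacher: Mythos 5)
Your proof is correct and relies on the same key ingredients as the paper's (Theorem~\ref{globaltolocal}, Theorem~\ref{minmax}, Lemma~\ref{extremalcase}, Theorem~\ref{singular}); Parts~1 and~3 follow the paper's argument essentially verbatim, up to reorganizing the cycle of implications in Part~3 (you prove $(a)\Rightarrow(c)$ directly and add $(c)\Rightarrow(b)$, which still closes the equivalence). The one genuinely different step is the left-hand inclusion in Part~2: the paper studies the connected component of $0$ in the preimage $f^{-1}\big(B(0,\rho')\big)$, shows it is compactly contained in $B(0,r)$, and argues by contradiction on the boundary of its image; you instead decompose the target ball $B(0,\rho')$ as the disjoint union of the open sets $B(0,\rho')\cap f(B(0,r))$ and $B(0,\rho')\setminus f\big(\overline{B(0,r)}\big)$ --- legitimate because the lower bound in~\eqref{eq:minmax} keeps $f(\partial B(0,r))$ out of $B(0,\rho')$ and $f(B(0,r))$ is open, the singular set being empty there --- and conclude by connectedness. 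This is a slightly cleaner route to the same conclusion. A small point in your favor: in $(b)\Rightarrow(c)$ the relevant equality in~\eqref{eq:minmax} is indeed the one in the \emph{lower} bound, since $\rho\frac{a-\rho}{1-a\rho}=\rho^2$ whereas $\rho\frac{a+\rho}{1+a\rho}\neq\rho^2$; the paper's text speaks of the ``right-hand side'' at that point, which appears to be a slip, and your version is the accurate one (Theorem~\ref{minmax} yields the conclusion from equality on either side, so nothing is at stake).
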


\begin{proof}
We prove each of the three properties separately.
\begin{enumerate}
\item
Since $\partial_cf(0)\neq 0$, by Proposition~\ref{realdifferential} we conclude that $f$ is a local diffeomorphism near $0$. Hence, there is a well-defined
\[r(f):= \sup\{r \in (0,1) : f \mathrm{\ is\ injective\ in\ } B(0,r)\}\]
and our thesis is $r(f)\geq \rho$.\\
Now, $f$ is a diffeomorphism from $B(0,r(f))$ onto its image while $f$ is not injective in $B(0,r)$ for any $r>r(f)$. By Theorem~\ref{globaltolocal}, there exists a point $q_0$ with $|q_0| = r(f)$, with $|f(q_0)| \leq (r(f))^2$. On the other hand, according to inequality~\eqref{eq:minmax}, 
\[|f(q_0)| \geq r(f) \frac{a-r(f)}{1-ar(f)}.\] The two inequalities together yield 
\[r(f) \geq \frac{a-r(f)}{1-ar(f)}.\]
The function $ r \mapsto \frac{a-r}{1-ar}$ from the real segment $(-1,1)$ to itself is strictly decreasing by Remark~\ref{quatmoebincrease} and it has a fixed point at $\rho$ by Lemma~\ref{extremalcase}. Therefore, the last inequality implies that $r(f)\geq \rho$, as desired.

\item
The right-hand inclusion
\[f({B}(0,r)) \subseteq {B}\left(0, r \frac{a+r}{1+ar}\right)\]
is an immediate consequence of inequality~\eqref{eq:minmax} and of the Maximum Modulus Principle~\ref{MaxPrinc}.\\
As for the left-hand inclusion, we reason as follows. If we take $r<\rho \leq r(f)$ and consider the preimage
\[U:=f^{-1}\Big(B\Big(0,r \frac{a-r}{1-ar}\Big)\Big)\]
then $U \subseteq B(0,r) \cup (\B \setminus \overline{B(0,\rho)})$. Indeed, if $q \in U$ had $r \leq |q| \leq \rho$ then, by inequality~\eqref{eq:minmax} and by Lemma~\ref{extremalcase},
\[|f(q)|\geq |q| \frac{a-|q|}{1-a|q|} \geq r \frac{a-r}{1-ar},\]
which would contradict the fact that $f(q) \in f(U) \subseteq B\big(0,r \frac{a-r}{1-ar}\big)$. Thus, the connected component $U_0$ of $U$ that includes the origin $0$ is such that
\[U_0 \subseteq B(0,r) \subset\subset B(0,r(f)).\]
In particular, $f$ is a diffeomorphism from the nonempty bounded open set $U_0$ onto its image 
\[V_0:=f(U_0)\subseteq B\Big(0,r \frac{a-r}{1-ar}\Big).\]
Moreover, $f$ maps the boundary $\partial U_0$ diffeomorphically to 
\[\partial V_0 \subseteq \overline{B\Big(0,r \frac{a-r}{1-ar}\Big)}.\] 
Let us prove, by contradiction, that $V_0=B\big(0,r \frac{a-r}{1-ar}\big)$. If this were not the case then $\partial V_0$ would include some interior point of the ball $B\big(0,r \frac{a-r}{1-ar}\big)$ and $\partial U_0$ would include some point of $U$. This is a contradiction. Indeed, by construction, $U_0$ is open and closed in the open set $U$, whence $U \cap \overline{U_0} = U \cap U_0$ does not intersect $\partial U_0$.\\
Therefore,
\[B\Big(0,r \frac{a-r}{1-ar}\Big) = V_0 \subseteq f(B(0,r)),\]
which proves the left-hand inclusion in property {\it 2}.\\
Finally, by taking the limit as $r \to \rho$, so that $r \frac{a-r}{1-ar} \to \rho^2$, we get 
\[{B}(0, \rho^2) \subseteq f(B(0, \rho) ),\]
as desired.

\item We prove three separate implications.\begin{enumerate}
\item[$(a) \Rightarrow (b)$]If $B(0,\rho)$ is the largest ball centered at $0$ where $f$ is injective then, by Theorem~\ref{globaltolocal}, there exists a point $q_0$ with $|q_0| =\rho$, with $|f(q_0)| \leq \rho^2$. On the other hand, according to inequality~\eqref{eq:minmax}, 
\[|f(q_0)| \geq \rho \frac{a-\rho}{1-a\rho} = \rho^2.\]
Therefore, there exists a point $q_0 \in \partial B(0,\rho)$ with $|f(q_0)| = \rho^2$.
\item[$(b) \Rightarrow (c)$]
If there exists a point $q_0 \in \partial B(0,\rho)$ with $f(q_0) \in \partial{B}(0, \rho^2)$ then an equality holds on the right-hand side of~\eqref{eq:minmax} at $q=q_0$. According to Theorem~\ref{minmax}, this implies that $f(q) = q \mr(q)$ where $\mr$ is a regular M\"obius transformation of $\B$ with $|\mr(0)|=a$.
\item[$(c) \Rightarrow (a)$]
If $f(q) = q \mr(q)$ where $\mr$ is a regular M\"obius transformation of $\B$ and if $a:=|\mr(0)|$ then, by Lemma~\ref{extremalcase}, $\partial_cf$ has a zero of modulus $\rho:=\frac{1-\sqrt{1-a^2}}{a}$. In such a case, by Theorem~\ref{singular} the function $f$ is not injective on any ball $B(0,R)$ with $R>\rho$.
\end{enumerate}
\end{enumerate}
\end{proof}

We conclude this section drawing from Theorem~\ref{landau} a useful consequence.

\begin{corollary}\label{landaubd}
Let $f : B(0,R) \to \hh$ be a bounded regular function. Set $C:= \sup_{q \in B(0,R)}|f(q)-f(0)|$, suppose that $a := \frac{R}{C} |\partial_cf(0)|$ belongs to $(0,1)$ and set $\rho := \frac{1-\sqrt{1-a^2}}{a}$. Then the following properties hold:
\begin{enumerate}
\item The function $f$ is injective at least in the ball $B(0, \rho R)$. 
\item For all $r \in (0, \rho)$, $B\big(f(0), r\frac{a-r}{1-ar} C\big) \subseteq  f({B}(0,rR)) \subseteq {B}\big(f(0), r \frac{a+r}{1+ar}C\big)$. As a consequence,
\[{B}\left(f(0), \rho^2C\right) \subseteq f\left(B(0, \rho R) \right).\]
\item The following are equivalent:
\begin{enumerate}
\item $B(0,\rho R)$ is the largest ball centered at $0$ where $f$ is injective;
\item there exists a point $q_0 \in \partial {B}(0,\rho R)$ with $|f(q_0)-f(0)|=\rho^2C$;
\item $f(q) = f(0) + \frac C R q \mr(\frac q R)$ where $\mr$ is a regular M\"obius transformation of $\B$ (necessarily such that $\mr(0)=\frac R C \partial_cf(0)$, whence $|\mr(0)|=a$).
\end{enumerate}
\end{enumerate}
\end{corollary}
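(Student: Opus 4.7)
The plan is to reduce this corollary to Theorem~\ref{landau} by a rescaling trick. Specifically, I would define
\[g(q) := \frac{1}{C}\bigl(f(Rq)-f(0)\bigr)\]
for $q \in \B$. The goal is to check that $g$ satisfies the hypotheses of Theorem~\ref{landau}, apply it, and translate the three conclusions back to $f$ via the substitution $q = Rq'$.

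First I would verify that $g$ is a regular self-map of $\B$ fixing $0$. Regularity and the equality $g(0)=0$ are immediate. By the definition of $C$ we have $|g(q)| \leq 1$ for all $q \in \B$; since $\partial_c f(0) \neq 0$ forces $g$ to be non-constant, the Maximum Modulus Principle~\ref{MaxPrinc} upgrades this to the strict inequality $|g(q)|<1$ on $\B$, so indeed $g \colon \B \to \B$. Next, the chain rule for the Cullen derivative on a real-analytic rescaling gives $\partial_c g(q) = \tfrac{R}{C}\partial_c f(Rq)$, so that $|\partial_c g(0)| = \tfrac{R}{C}|\partial_c f(0)| = a \in (0,1)$.

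With these hypotheses in hand, Theorem~\ref{landau} applies to $g$ with the same value of $\rho$. The translation is routine: property \textit{1} for $g$ says $g$ is injective on $B(0,\rho)$, and since $g$ differs from $f\circ(R\,\cdot\,)$ by an affine transformation this is equivalent to $f$ being injective on $B(0,\rho R)$. For property \textit{2}, using $g(B(0,r)) = \tfrac{1}{C}(f(B(0,rR))-f(0))$, the inclusions for $g$ transform directly into
\[B\bigl(f(0),\,r\tfrac{a-r}{1-ar}C\bigr) \subseteq f(B(0,rR)) \subseteq B\bigl(f(0),\,r\tfrac{a+r}{1+ar}C\bigr),\]
and taking $r\to\rho$ yields the ball $B(f(0),\rho^2 C)$ inside $f(B(0,\rho R))$. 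For property \textit{3}, the equivalence of (a) and (b) carries over via $q_0 = Rq_0'$; the implication into (c) gives $g(q') = q'\mr(q')$ for some regular M\"obius transformation $\mr$ of $\B$ with $|\mr(0)|=a$, and substituting $q' = q/R$ yields $f(q) = f(0)+\tfrac{C}{R}q\,\mr(q/R)$ with $\mr(0) = \partial_c g(0) = \tfrac{R}{C}\partial_c f(0)$.

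There is no serious obstacle here: the whole argument is a change of variables. The only points requiring slight care are the strict inequality $|g|<1$ on $\B$ (which is why the Maximum Modulus Principle is invoked, rather than relying on the sup defining $C$) and the bookkeeping in the translation of property \textit{3(c)}, where one must confirm that the regular M\"obius transformation appearing for $g$ corresponds to the claimed normalization for $f$.
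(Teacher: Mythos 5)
Your proposal is correct and is essentially identical to the paper's own proof, which applies Theorem~\ref{landau} to the very same rescaled function $g(q)=\frac{1}{C}(f(qR)-f(0))$; the paper simply states the reduction without spelling out the verification of $g(\B)\subseteq\B$ and the back-translation of the three properties, which you carry out correctly.
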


\begin{proof}
It is an immediate consequence of Theorem \ref{landau}, applied to 
\[g(q):=\frac{1}{C}\left(f(qR)-f(0)\right),\]
since $g \colon \B \to \B$, $g(0)=0$ and $|\partial_c g (0) |=a.$
\end{proof}


\section{A new approach to the quaternionic Bloch--Landau theorem}\label{sec:bloch}

This section presents an application of Theorem~\ref{landau} to a new version of the quaternionic Bloch--Landau-type result proven in~\cite{blochlandau}. We will first prove a lemma and give a technical definition. In the statements and proofs, the symbol $B_I(0,R)$ denotes the disk $B(0,R)\cap L_I$ and $\B_I$ stands for $\B \cap L_I$, as usual.

\begin{lemma}\label{selfmap}
Let $g:B(0,R)\to\hh$ be a regular function. If, for some $I \in \s$, $|\partial_cg(x+yI)|$ is bounded by a constant $c>0$ in the disk $B_I(0,R)$ then $g(B(0,R)) \subseteq B(g(0),2cR)$.
\end{lemma}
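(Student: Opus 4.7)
The plan is to prove the lemma in two steps: first, establish the estimate $|g(z)-g(0)| \leq c|z|$ on the distinguished slice $L_I$ by integrating the Cullen derivative along a segment; then, extend the estimate to all of $B(0,R)$ via the representation formula for slice regular functions, at the cost of an extra factor of at most $2$.

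For the slice estimate, I would fix $z \in B_I(0,R)$, parametrize the segment from $0$ to $z$ by $\gamma(t) = tz$ for $t \in [0,1]$, and observe that $\gamma$ lies entirely in $B_I(0,R)$. By Proposition~\ref{realdifferential}, the real differential of $g$ at $\gamma(t)$ acts on any tangent vector by right multiplication by $\partial_c g(\gamma(t))$, so applying it to $\gamma'(t)=z \in L_I$ yields
\[\frac{d}{dt} g(tz) = z\, \partial_c g(tz).\]
Integrating over $[0,1]$ and using $|\partial_c g(tz)| \leq c$, I obtain $|g(z)-g(0)| \leq c|z|$.

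For the global estimate, given an arbitrary $q = x+yJ \in B(0,R)$ (with $x,y \in \rr$ and $J \in \s$), I would invoke the representation formula for slice regular functions (see Chapter~1 in~\cite{librospringer}), which reads
\[g(x+yJ) = \frac{1-JI}{2}\, g(x+yI) + \frac{1+JI}{2}\, g(x-yI).\]
Both points $x\pm yI$ lie in $B_I(0,R)$ since $|x\pm yI|=|q|<R$, so the slice estimate applies to both $g(x\pm yI)-g(0)$. Using the identity $\frac{1-JI}{2}+\frac{1+JI}{2}=1$, I subtract $g(0)$ from both sides; then, taking moduli, bounding $|1 \pm JI|\leq 1+|J||I|=2$, and combining with the slice estimate, I reach
\[|g(q)-g(0)| \leq |g(x+yI)-g(0)| + |g(x-yI)-g(0)| \leq 2c|q| < 2cR,\]
which is the desired conclusion.

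No genuine obstacle is anticipated; the only care needed is the noncommutativity bookkeeping in the integration (keeping $z$ on the \emph{left} of $\partial_c g$, consistent with the right-multiplication convention of Proposition~\ref{realdifferential}) and in quoting the correct sign convention of the representation formula.
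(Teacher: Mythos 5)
Your proof is correct and follows essentially the same route as the paper: an integral of the Cullen derivative along a radial segment gives $|g(z)-g(0)|\leq c|z|<cR$ on the slice $L_I$, and the passage to the whole ball costs a factor of $2$. The only difference is that you carry out the doubling step explicitly via the representation formula (with the correct bound $\bigl|\tfrac{1\pm JI}{2}\bigr|\leq 1$), whereas the paper simply cites Proposition~6.10 of~\cite{weierstrass}, which encapsulates exactly that argument.
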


\begin{proof}
Let $h(q):= g(q)-g(0)$. Our thesis will be proven if we show that $h(B(0,R)) \subseteq B(0,2cR)$.
For all $z \in B_I(0,R)$, let us denote by $\ell_z$ the line segment between $0$ and $z$. Then, as in the holomorphic case,
\[h(z) = \int_{\ell_z} \partial_c g (\zeta)\,d\zeta,\]
whence $|h(z)|\leq c|z| < cR$. By Proposition~6.10 of~\cite{weierstrass}, we conclude that $|h(q)| < 2cR$, as desired.
\end{proof}

\begin{definition}
Let $f:\B \to \hh$ be a regular function and let $p = x+Iy \in \B$. We define $f_p$ as the (unique) regular function on the ball $B(0,1-|p|)$ that coincides with $z \mapsto f(p+z)$ on the disk $B_I(0,1-|p|)$.
\end{definition}

\begin{remark}
When $p$ is in the real interval $(-1,1)$ then $f_p(q) = f(p+q)$ for all $q \in B(0,1-|p|)$.
\end{remark}

We now proceed with the announced new version of Theorem~6 of~\cite{blochlandau}.

\begin{theorem}\label{RegTras}
Let $\Omega\subseteq\hh$ be a symmetric slice domain that contains the closure of the unit ball $\B$ and let $f:\Omega \to \hh$ be a regular function with $|\partial_c f(0)|=1$. Then for all $I \in \s$ there exist $p \in \B_I$ and a ball $B$ centered at $0$ such that both of the following properties hold:
\begin{enumerate}
\item the function $f_p$ is injective in $B$;
\item the image $f_p(B)$ contains a ball $B(f(p),b)$ of radius $b>1/31$.
\end{enumerate}
As a consequence, there is a disk $D\subseteq\B_I$ centered at $p$ such that 
\begin{enumerate}
\item the function $f$ is injective in $D$;
\item the distance between $f(p)$ and $f(\partial D)$ is at least $b>1/31$.
\end{enumerate}
\end{theorem}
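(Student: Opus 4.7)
The plan is to mimic the classical Bloch--Landau argument by using a maximum-modulus trick on the continuous function $M(z):=(1-|z|)|\partial_c f(z)|$ on $\overline{\B_I}$. Fix $I\in\s$; since $\overline{\B}\subset\Omega$, the function $M$ is well-defined and continuous on $\overline{\B_I}$, vanishes on $\partial\B_I$, and satisfies $M(0)=|\partial_c f(0)|=1$, so it attains its maximum at some $p\in\B_I$ with $M(p)\geq1$. Set $R_0:=(1-|p|)/2$. For any $z\in B_I(p,R_0)$ one has $|z|<(1+|p|)/2$, hence $1-|z|>R_0$; combined with the maximality $M(z)\leq M(p)=2R_0|\partial_c f(p)|$ this yields $|\partial_c f(z)|\leq 2|\partial_c f(p)|$. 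Applying Lemma~\ref{selfmap} to $f_p$ restricted to $B(0,R_0)$ with $c=2|\partial_c f(p)|$ then gives
\[C:=\sup_{q\in B(0,R_0)}|f_p(q)-f(p)|\leq 4|\partial_c f(p)|R_0.\]

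Next I apply Corollary~\ref{landaubd} to $f_p$ on $B(0,R_0)$. The relevant parameter $a:=R_0|\partial_c f(p)|/C$ satisfies $a\geq 1/4$ by the bound on $C$; one may assume $a<1$, since $a=1$ forces $f_p$ to be affine by the Schwarz lemma, which makes the conclusion immediate. Setting $\rho:=(1-\sqrt{1-a^2})/a$, the corollary gives injectivity of $f_p$ on $B:=B(0,\rho R_0)$ together with $B(f(p),\rho^2 C)\subseteq f_p(B)$. Take $b:=\rho^2 C$; from $C=R_0|\partial_c f(p)|/a$ and $R_0|\partial_c f(p)|=M(p)/2\geq 1/2$ one has $b\geq \rho^2/(2a)$. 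Differentiating the defining relation $a\rho^2-2\rho+a=0$ yields $2a\rho'=2\rho/(1-a\rho)$, from which $\rho^2/a$ is strictly increasing in $a\in(0,1)$; its value at $a=1/4$ is $4(4-\sqrt{15})^2=124-32\sqrt{15}$. Hence $b\geq 62-16\sqrt{15}>1/31$, the last strict inequality being equivalent to $1921^2=3690241>3690240=15\cdot 496^2$. This establishes items (1) and (2) of the statement.

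For the ``as a consequence'' part, set $D:=B_I(p,\rho R_0)\subseteq\B_I$ (valid since $\rho R_0<R_0<1-|p|$). On $D$ the identity $f(q)=f_p(q-p)$ combined with injectivity of $f_p$ on $B(0,\rho R_0)$ yields injectivity of $f$. For $z\in\partial D$, set $w:=(z-p)/R_0\in L_I$, so $|w|=\rho$. The rescaled function $g(q):=(f_p(R_0q)-f(p))/C$ is a regular self-map of $\B$ with $g(0)=0$ and $|\partial_c g(0)|=a$, so Theorem~\ref{minmax} gives $|g(w)|\geq|w|(a-|w|)/(1-a|w|)$, which evaluates to $\rho^2$ at $|w|=\rho$ by the identity $\rho(a-\rho)/(1-a\rho)=\rho^2$ derived from $a\rho^2-2\rho+a=0$. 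Hence $|f(z)-f(p)|=C|g(w)|\geq\rho^2 C=b$, so $d(f(p),f(\partial D))\geq b>1/31$. The delicate step is the numerical verification $b>1/31$: the factor of $2$ appearing in Lemma~\ref{selfmap}, with no counterpart in the complex case, forces the resulting constant only marginally above $1/31$ rather than to a cleaner value.
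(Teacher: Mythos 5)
Your proof is correct and follows essentially the same route as the paper's: both select a point $p$ at which the Cullen derivative is extremal for a weight of the form $(1-|z|)$, bound $|\partial_c f|$ by twice its value at $p$ on the disk of radius $(1-|p|)/2$, invoke Lemma~\ref{selfmap} to control the image, and then apply Corollary~\ref{landaubd} with $a\geq 1/4$ to obtain $b\geq 62-16\sqrt{15}>1/31$. The only real differences are that you maximize the pointwise quantity $(1-|z|)|\partial_c f(z)|$ over $\overline{\B_I}$ whereas the paper takes the largest radius $r_0$ at which $(1-r)\max_{\partial B_I(0,r)}|\partial_c f|$ equals $1$ (which pins $a=1/4$ exactly and so avoids your monotonicity argument for $\rho^2/a$ and the separate treatment of $a=1$), and that you make the final boundary-distance estimate explicit via the lower bound of Theorem~\ref{minmax} where the paper only remarks that $f(\partial D)\subseteq f_p(\partial B)$; both variants are handled correctly.
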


\begin{proof}
Let us fix $I \in \s$ and set
\[h(r):=(1-r) \max_{\partial B_I(0,r)} |\partial_c f|.\]
Then $h: [0,1] \to \rr$ is a continuous function. Since $h(0)=1$ and $h(1)=0$, there is a well-defined
\[r_0:=\max\{r \in [0,1] : h(r) = 1\}\in[0,1).\]
By the definition of $r_0$, $h(r)<1$ for all $r\in (r_0,1]$. Now let $p \in \partial B_I(0,r_0)$ be such that
\[|\partial_c f(p)| = \max_{\partial B_I(0,r_0)} |\partial_c f|= \frac{1}{1-r_0}.\]
If we set $\rho_0:=\frac{1-r_0}{2}$ and $\rho_1:=\frac{1+r_0}{2}$ then $|\partial_c f(p)| = \frac{1}{2\rho_0}$ and we have, for all $z \in B_I(0,\rho_1)\supset B_I(p,\rho_0)$,
\[|\partial_c f(z)| \leq \max_{\partial B_I(0,\rho_1)} |\partial_c f| = \frac{h(\rho_1)}{1-\rho_1} < \frac{1}{1-\rho_1} = \frac{1}{\rho_0}\]
where the first inequality follows from the Maximum Modulus Principle~\ref{MaxPrinc} and the second inequality follows from the fact that, by construction, $\rho_1>r_0$.

Let us consider the regular function $f_p: B(0,R) \to \hh$ with $R:=1-|p|$. It has $f_p(0) = f(p)$. Moreover, for all $z \in B_I(0,\rho_0)$,
\[\partial_c f_p(z) = \partial_c f(p+z),\]
whence $|\partial_c f_p(0)| =|\partial_c f(p)| = \frac{1}{2\rho_0}$ and $|\partial_c f_p(z)| \leq \frac{1}{\rho_0}$ for all $z \in B_I(0,\rho_0)$. By Lemma~\ref{selfmap}, $f_p(B(0,\rho_0))\subseteq B(f(p),2)$.

By applying Corollary~\ref{landaubd} to $f_p: B(0,\rho_0) \to B(f(p),2)$, we conclude that $f_p$ is injective in the ball $B(0,\rho_0 \rho)$ and that $f_p(B(0,\rho_0 \rho))$ includes $B\left(f(p),2\rho^2\right)$, where, after setting $a := \frac{\rho_0}{2} |\partial_c f_p(0)|  = \frac 14$,
\[\rho = \frac{1-\sqrt{1-a^2}}{a} = \frac{1-\sqrt{1-(1/4)^2}}{1/4} = 4-\sqrt{15}\]
and
\[2\rho^2 = 2(31- 8\sqrt{15})>1/31.\]
The main statement is thus proven, with $B:=B(0,\rho_0 \rho)$ and $b:=2\rho^2$.

The final statement follows from the definition of $f_p$, after setting $D:=B_I(p,\rho_0\rho)$ and observing that $f(\partial D)\subseteq f_p(\partial B)$.
\end{proof}

In the original result of~\cite{blochlandau}, the role of $B(f(p),b)$ was played by an open set of a different type. This slight improvement is a result of the new approach used here. The following problem is still open:

\begin{problem}
Find sufficient conditions on a regular $f:\overline{\B}\subset\Omega \to \hh$ to guarantee that the image of $f$ contains a ball (or another open set of a specific type) of universal radius. 
\end{problem}

The present work suggests that it might be possible to address the previous problem by first solving the following one:

\begin{problem}
Generalize Theorem~\ref{landau} and Corollary~\ref{landaubd}, studying the behavior at a point $p$ rather than at the origin.
\end{problem}

New work is envisioned to achieve the desired generalization, which will most likely not mimic the classical complex result but involve new exciting phenomena.


\section*{Acknowledgments}

This work was supported by the following grants of the Italian Ministry of Education (MIUR): PRIN {\it Variet\`a reali e complesse: geometria, topologia e analisi armonica}; Futuro in Ricerca {\it Differential Geometry and Geometric Function Theory}; Finanziamenti Premiali {\it SUNRISE}. It was also supported by the research group GNSAGA of INdAM.\\
We wish to thank the anonymous referee for carefully reading this article and for suggesting useful improvements to the presentation.

\end{document}